\theoremstyle{plain}
\newtheorem{theorem}{Theorem}[section]
\newtheorem{lemma}[theorem]{Lemma}
\newtheorem{corollary}[theorem]{Corollary}
\newtheorem{proposition}[theorem]{Proposition}
\newtheorem{observation}[theorem]{Observation}
\newtheorem{remark}[theorem]{Remark}
\newtheorem{porism}[theorem]{Porism}
\theoremstyle{definition}
\newcommand{\cdim}{\textnormal{cdim}}
\newcommand{\diam}{\textnormal{diam}}
\newcommand{\rad}{\textnormal{rad}}
\newcommand{\rdiam}{\textnormal{rdiam}}
\newcommand{\rrad}{\textnormal{rrad}}
\newcommand{\code}{\textnormal{code}}
\def\finf{\mathop{{\rm I}\kern -.27 em {\rm F}}\nolimits}
\begin{document}

\date{}

\title{The connected metric dimension at a vertex of a graph}

\author{{\bf{Linda Eroh}}$^1$, {\bf{Cong X. Kang}}$^2$ and {\bf{Eunjeong Yi}}$^3$\\
\small University of Wisconsin Oshkosh, Oshkosh, WI 54901, USA$^1$\\
\small Texas A\&M University at Galveston, Galveston, TX 77553, USA$^{2,3}$\\
{\small\em eroh@uwosh.edu}$^1$; {\small\em kangc@tamug.edu}$^2$; {\small\em yie@tamug.edu}$^3$}

\maketitle

\begin{abstract}
The notion of metric dimension, $\dim(G)$, of a graph $G$, as well as a number of variants, is now well studied. In this paper, we begin a local analysis of this notion by introducing $\cdim_G(v)$, \emph{the connected metric dimension of $G$ at a vertex $v$}, which is defined as follows: a set of vertices $S$ of $G$ is a \emph{resolving set} if, for any pair of distinct vertices $x$ and $y$ of $G$, there is a vertex $z \in S$ such that the distance between $z$ and $x$ is distinct from the distance between $z$ and $y$ in $G$. We say that a resolving set $S$ is \emph{connected} if $S$ induces a connected subgraph of $G$. Then, $\cdim_G(v)$ is defined to be the minimum of the cardinalities of all connected resolving sets which contain the vertex $v$. The \emph{connected metric dimension of $G$}, denoted by $\cdim(G)$, is $\min\{\cdim_G(v): v \in V(G)\}$. Noting that $1 \le \dim(G) \le \cdim(G) \le \cdim_G(v) \le |V(G)|-1$ for any vertex $v$ of $G$, we show the existence of a pair $(G,v)$ such that $\cdim_G(v)$ takes all positive integer values from $\dim(G)$ to $|V (G)|-1$, as $v$ varies in a fixed graph $G$. We characterize graphs $G$ and their vertices $v$ satisfying $\cdim_G(v) \in \{1, |V(G)|-1\}$. We show that $\cdim(G)=2$ implies $G$ is planar, whereas it is well known that there is a non-planar graph $H$ with $\dim(H)=2$. We also characterize trees and unicyclic graphs $G$ satisfying $\cdim(G)=\dim(G)$. We show that $\cdim(G)-\dim(G)$ can be arbitrarily large. We determine $\cdim(G)$ and $\cdim_G(v)$ for some classes of graphs. We further examine the effect of vertex or edge deletion on the connected metric dimension. We conclude with some open problems.
\end{abstract}

\noindent\small {\bf{Keywords:}} distance, connected resolving set, connected metric dimension, metric dimension\\
\small {\bf{2010 Mathematics Subject Classification:}} 05C12


\section{Introduction}

Let $G$ be a finite, simple, undirected, and connected graph of order at least two with vertex set $V(G)$ and edge set $E(G)$. The \emph{distance} between two vertices $x, y \in V(G)$, denoted by $d_G(x, y)$, is the length of a shortest path between $x$ and $y$ in $G$, and the distance between a vertex $v \in V(G)$ and a set $S \subseteq V(G)$, denoted by $d_G(v, S)$, is $\min\{d_G(v,x): x \in S\}$; we drop the subscript $G$ when no ambiguity arises. The \emph{eccentricity}, $e(v)$, of a vertex $v \in V(G)$ is $\max\{d(v,x) : x \in V(G)\}$. The \emph{radius}, $\rad(G)$, of $G$ is $\min\{ e(v): v \in V(G)\}$ and the \emph{diameter}, $\diam(G)$, of $G$ is $\max\{ e(v): v \in V(G)\}$; note that $\rad(G) \le \diam(G) \le 2 \rad(G)$. A vertex $u \in V(G)$ with $e(u)=\rad(G)$ is called a \emph{central vertex} of $G$. The \emph{open neighborhood} of a vertex $v \in V(G)$ is $N(v)=\{u \in V(G) \mid uv \in E(G)\}$, and its \emph{closed neighborhood} is $N[v]=N(v) \cup \{v\}$. The \emph{degree} of a vertex $u$ in $G$, denoted by $\deg_G(u)$, is $|N(u)|$; an end vertex is a vertex of degree one, and a \emph{major vertex} is a vertex of degree at least three. For a fixed graph $G$, an end vertex $\ell$ is called a \emph{terminal vertex} of a major vertex $v$ if $d(\ell, v)<d (\ell, w)$ for every other major vertex $w$ in $G$. The \emph{terminal degree}, $ter_G(v)$, of a major vertex $v$ is the number of terminal vertices of $v$ in $G$, and an \emph{exterior major vertex} is a major vertex that has positive terminal degree. We denote by $ex(G)$ the number of exterior major vertices of $G$, and $\sigma(G)$ the number of end vertices of $G$. A vertex $v\in V(G)$ is called a \emph{cut-vertex} if $G-v$ is disconnected. For $S \subseteq V(G)$, we denote by $G[S]$ the subgraph of $G$ induced by $S$. The \emph{join} of two graphs $H_1$ and $H_2$, denoted by $H_1+H_2$, is the graph obtained from the disjoint union of $H_1$ and $H_2$ by joining every vertex of $H_1$ with every vertex of $H_2$. The \emph{complement} of $G$, denoted by $\overline{G}$, has vertex set $V(\overline{G})=V(G)$ and edge set $E(\overline{G})$ such that $xy \in E(\overline{G})$ if and only if $xy \not\in E(G)$ for any distinct $x,y \in V(G)$. We denote by $P_n, C_n, K_n, K_{a,n-a}$, respectively, the path, the cycle, the complete graph, and the complete bi-partite graph on $n$ vertices.\\ 

A vertex $z \in V(G)$ \emph{resolves} a pair of vertices $x,y \in V(G)$ if $d(x,z) \neq d(y,z)$. For two distinct vertices $x,y \in V(G)$, let $R\{x,y\}=\{z \in V(G): d(x,z) \neq d(y,z)\}$. A set $S \subseteq V(G)$ is a \emph{resolving set} of $G$ if $|S\cap R\{x,y\}| \ge 1$ for every pair of distinct vertices $x$ and $y$ of $G$. The \emph{metric dimension} of $G$, denoted by $\dim(G)$, is the minimum cardinality of $S$ over all resolving sets of $G$. Two vertices $u, w \in V(G)$ are called \emph{twin vertices} if $N(u)-\{w\}=N(w)-\{u\}$; note that $S \cap \{u,w\} \neq \emptyset$ for any resolving set $S$ of $G$. For an ordered set $S=\{u_1, u_2, \ldots, u_k\} \subseteq V(G)$ of distinct vertices, the \emph{metric code} (or \emph{code}, for short) of $v \in V(G)$ with respect to $S$ is the $k$-vector $\code_S(v) =(d(v, u_1), d(v, u_2), \ldots, d(v, u_k))$. The concept of metric dimension, introduced independently by  Slater~\cite{slater} and by Harary and Melter~\cite{harary}, is now very well studied. One of the first motivations for its study is that of robot navigation in a network modeled by a graph (see~\cite{tree2}). The robot determines its location in the network by ``landmarks" or transmitters placed at sites (vertices of the graph); thus, metric dimension is the minimum number of transmitters required for the robot to know its position at all times in the network modeled by the graph. It was noted in~\cite{NP} that determining the metric dimension of a graph is an NP-hard problem.\\

Now, after giving the matter mooted by the forgoing scenario some more thought, it is easy to imagine that the more serious constraint is not the number of transmitters (markers) available to a robot, an intruder or a virus in a computer network, or a brave landing party onto an alien spaceship (taking inspiration from sci-fi movies), but the number of sites (vertices of the graph) that must be consecutively visited, infected, or secured, before the entire network is in some sense ``located". For example, take $K_{1,3}$ and label its vertices by $0,1,2,3$, where $0$ denotes the vertex of degree $3$. Let $G$ be the graph obtained from $K_{1,3}$ by replacing the edge $\{0,3\}$ by the path $P_{n-2}$. Now, a landing party which lands on vertex $1$ or $2$ would need to visit only three sites to locate every site of $G$. But, starting at the third end vertex, it would need to visit $n-1$ sites prior to being able to do the same. Of course, the key assumption here is that a landing party can only move from site A to site B in one step if A and B are adjacent.\\

Keeping the above scenario in mind, we introduce the connected metric dimension at a vertex of a graph and consequent notions. A resolving set $S$ of $G$ is called a \emph{connected resolving set} of $G$ if $G[S]$ is connected, and the \emph{connected metric dimension} of $G$, denoted by $\cdim(G)$, is the minimum cardinality of $S$ over all connected resolving sets of $G$. For $v \in V(G)$, we define \emph{the connected metric dimension at $v$  of $G$}, denoted by $\cdim_G(v)$, to be the minimum cardinality of a resolving set of $G$ which contains $v$ and induces a connected subgraph of $G$; then $\cdim(G)=\min_{v\in V(G)}\{\cdim_G(v)\}$ for any vertex $v\in V(G)$. As an analogue of eccentricity in the context of resolvability, we define $\rdiam(G)=\max_{u \in V(G)}\{\cdim_G(u)\}$ to be the \emph{resolving diameter} of $G$; then $\cdim(G)$ can also be called the \emph{resolving radius}, denoted by $\rrad(G)$, of $G$. It then follows that the \emph{resolving center} of $G$, denoted by $RC(G)$, is $\{v \in V(G) : \cdim_G(v)=\cdim(G)\}$, and that the \emph{resolving periphery} of $G$, denoted by $RP(G)$, is $\{v \in V(G) : \cdim_G(v) =\rdiam(G)\}$.\\

This paper is organized as follows. In section~2, we recall, for later use, some results on metric dimension of graphs. In section~3, we obtain some results on connected metric dimension for general graphs. For a connected graph $G$ and for $v\in V(G)$, we make useful observations involving $\cdim(G)$, $\cdim_G(v)$, and $\dim(G)$. We show that $\rrad(G) \le \rdiam(G) \le \rrad(G)+\diam(G)$ and give an example showing that $\rdiam(G)-\rrad(G)$ can be arbitrarily large. We further characterize graphs $G$ of order $n \ge 2$ satisfying $\cdim(G) \in \{1, n-1\}$ and vertices $v\in V(G)$ satisfying $\cdim_G(v) \in \{1,n-1\}$. In section~4, we show that $\cdim(G)=2$ implies $G$ is a planar graph; this is an interesting result considering the well-known fact that $\dim(G)=2$ does not imply the planarity of $G$ (see~\cite{tree2}). Of course, $\cdim(G)=2$ is simply saying that $G$ is resolved by $\{x,y\}$ where $xy \in E(G)$. We show an example of a non-planar graph $G$ resolved by $\{x,y\} \subseteq V(G)$ such that $d(x,y)=2$. For each positive integer $k \ge 3$, we construct a non-planar graph $G$ satisfying $\cdim(G)=k$. In section~5, we determine $\cdim(G)$ and $\cdim_G(v)$, for $v \in V(G)$, when $G$ is a tree, the Petersen graph, a wheel graph, a bouquet of $m$ cycles (the vertex sum of $m$ cycles at one common vertex) for $m \ge 2$, a complete multi-partite graph, or a grid graph (the Cartesian product of two paths). In section~6, we examine the effect of vertex or edge deletion on the connected metric dimension. In section~7, noting that $\cdim(G) \ge \dim(G)$ and that $\cdim(G)-\dim(G)$ can be arbitrarily large, we characterize graphs $G$ satisfying $\cdim(G)=\dim(G)$ when $G$ is a tree or a unicyclic graph. In section~8, we conclude this paper with some open problems.


\section{Preliminaries}

In this section, we recall for later use some known results on metric dimension.

\begin{theorem}\emph{\cite{tree1}}\label{dim_bounds}
For a connected graph $G$ of order $n \ge 2$ and diameter $d$,
$$f(n,d) \le \dim(G) \le n-d,$$
where $f(n,d)$ is the least positive integer $k$ for which $k+d^k \ge n$.
\end{theorem}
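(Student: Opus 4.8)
The plan is to establish the two inequalities separately: the upper bound by exhibiting an explicit resolving set built from a diametral path, and the lower bound by a pigeonhole count on metric codes.

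For the upper bound $\dim(G) \le n-d$, I would first fix a shortest path $P: u=v_0, v_1, \ldots, v_d=w$ between two vertices $u,w$ with $d(u,w)=d=\diam(G)$, which exists by the definition of the diameter. I would then set $S = V(G) \setminus \{v_1, v_2, \ldots, v_d\}$, so that $|S| = n-d$ and, in particular, $v_0 \in S$. To verify that $S$ resolves $G$, observe that any two distinct vertices $x,y$ with at least one of them, say $x$, lying in $S$ are immediately resolved by $x$ itself, since $d(x,x)=0 \ne d(y,x)$. The only remaining pairs are those with $x,y \in \{v_1, \ldots, v_d\}$; writing $x=v_i$ and $y=v_j$ with $i \ne j$, these are resolved by $v_0 \in S$ because $d(v_0, v_i)=i \ne j = d(v_0, v_j)$ along the geodesic $P$. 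Hence $S$ is a resolving set and $\dim(G) \le n-d$.

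For the lower bound $\dim(G) \ge f(n,d)$, I would let $k=\dim(G)$ and fix a resolving set $S=\{u_1, \ldots, u_k\}$. The key observation is that every coordinate of a metric code lies in $\{0, 1, \ldots, d\}$ since all pairwise distances are at most $\diam(G)=d$, and moreover any vertex $v \notin S$ satisfies $d(v, u_i) \ge 1$ for all $i$, so $\code_S(v)$ lies in $\{1, \ldots, d\}^k$. Since there are exactly $n-k$ vertices outside $S$ and their codes are pairwise distinct (because $S$ resolves $G$), the pigeonhole principle yields $n-k \le d^k$, that is, $k + d^k \ge n$. Finally, because $g(k)=k+d^k$ is strictly increasing in $k$ (the linear term alone forces this, even when $d=1$) and $f(n,d)$ is defined as the least positive integer $k$ with $g(k) \ge n$, the inequality $g(\dim(G)) \ge n$ forces $\dim(G) \ge f(n,d)$.

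I expect the main subtlety to be the counting in the lower bound: one must restrict attention to the \emph{non-landmark} vertices so that the relevant codes avoid the coordinate value $0$ and hence lie in the smaller box $\{1,\ldots,d\}^k$ of size $d^k$, rather than the naive box $\{0,\ldots,d\}^k$ of size $(d+1)^k$. This refinement is exactly what sharpens the crude bound $(d+1)^k \ge n$ into $k + d^k \ge n$. The upper bound, by contrast, is an essentially routine verification once a diametral path is chosen, the only point to check being that the single retained endpoint $v_0$ suffices to resolve all pairs among the deleted vertices.
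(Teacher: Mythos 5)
The paper states this theorem without proof, citing~\cite{tree1}, so there is no internal argument to compare against; your proof is correct and is essentially the standard one from that reference. Both halves check out: deleting the $d$ vertices $v_1,\ldots,v_d$ of a diametral geodesic leaves a resolving set of size $n-d$ (the retained endpoint $v_0$ resolves the deleted vertices since $d(v_0,v_i)=i$ along a geodesic, and every vertex of $S$ resolves itself against anything), while the injection of the $n-k$ non-landmark vertices into $\{1,\ldots,d\}^k$ yields $k+d^k \ge n$, whence $\dim(G) \ge f(n,d)$ directly from the minimality in the definition of $f$ (your monotonicity remark is harmless but not needed for that last step).
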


\begin{theorem}\emph{\cite{tree1}}\label{dim_characterization}
Let $G$ be a connected graph of order $n \ge 2$. Then
\begin{itemize}
\item[(a)] $\dim(G)=1$ if and only if $G=P_n$,
\item[(b)] $\dim(G)=n-1$ if and only if $G=K_n$,
\item[(c)] for $n \ge 4$, $\dim(G) =n-2$ if and only if $G=K_{s,t}$ ($s,t \ge 1$), $G=K_s+\overline{K_t}$ ($s \ge 1, t \ge 2$),or $G=K_s+(K_1 \cup K_t)$ ($s,t \ge 1$).
\end{itemize}
\end{theorem}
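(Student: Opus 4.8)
The plan is to handle the three parts in increasing order of difficulty, leaning throughout on the upper bound $\dim(G) \le n - \diam(G)$ from Theorem~\ref{dim_bounds} to pin down the diameter. For (a), the forward implication is immediate: an endpoint of $P_n$ has all $n$ distances distinct and so is a resolving set of size one. Conversely, if $\{w\}$ resolves $G$, then $u \mapsto d(u,w)$ is injective on the $n$ vertices; its values are nonnegative integers bounded by $e(w)$, so they must be exactly $0,1,\dots,n-1$. Writing $v_i$ for the unique vertex at distance $i$ from $w$, each $v_i$ ($i\ge1$) needs a neighbor at distance $i-1$, forcing the edge $v_{i-1}v_i$, while any further edge $v_iv_j$ with $j\ge i+2$ would give $d(w,v_j)\le i+1<j$, a contradiction. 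Hence $G=P_n$.

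For (b), one checks $\dim(K_n)=n-1$ directly, since any two omitted vertices carry identical (all-ones) codes whereas omitting a single vertex suffices to resolve $K_n$. For the converse, if $\dim(G)=n-1$ then Theorem~\ref{dim_bounds} gives $n-1\le n-\diam(G)$, so $\diam(G)\le1$; as $G$ is connected of order $n\ge2$ this forces $\diam(G)=1$, i.e.\ $G=K_n$.

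Part (c) is the substantial one. First, since $\dim(G)=n-2<n-1$, part (b) rules out $G=K_n$, while Theorem~\ref{dim_bounds} gives $\diam(G)\le2$; hence $\diam(G)=2$ and every distance in $G$ is $1$ or $2$. The backward direction is then a direct computation: in each of $K_{s,t}$, $K_s+\overline{K_t}$, and $K_s+(K_1\cup K_t)$ I would identify the twin classes (e.g.\ the two parts of $K_{s,t}$ are classes of mutual false twins, the $K_s$ factor of a join consists of universal vertices, and so on), delete one vertex from each of two classes to exhibit a resolving set of size $n-2$, and verify that no three vertices can be deleted. The one subtlety to watch is $K_s+(K_1\cup K_t)$, where the apex vertex $K_1$ is the \emph{only} vertex separating a $K_s$-vertex from a $K_t$-vertex; this is exactly what blocks deleting three vertices and pins the dimension at $n-2$ rather than $n-3$.

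The heart of the proof is the forward direction: classifying all diameter-$2$ graphs with $\dim(G)=n-2$. The key reformulation is that, in a diameter-$2$ graph, the code of a non-member of a set $S$ records only its adjacencies to $S$, so $S$ is resolving exactly when the vertices outside $S$ have pairwise-distinct neighborhoods within $S$; equivalently, two vertices fail to be resolved by every other vertex precisely when they are twins. Thus $\dim(G)=n-2$ says that we may delete two vertices but never three. I would organize the argument around the twin-equivalence classes, separating true twins ($N[u]=N[v]$) from false twins ($N(u)=N(v)$), and show that the ``delete two but not three'' condition collapses the class structure into exactly three patterns, which reassemble as $K_{s,t}$, $K_s+\overline{K_t}$, and $K_s+(K_1\cup K_t)$. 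The main obstacle is precisely this case analysis: cross-class pairs need not be twins yet may still have all of their resolvers confined to a tiny set (as the apex vertex above illustrates), so counting twin classes alone does not determine $\dim(G)$, and the bookkeeping of which deletions are obstructed must be carried out carefully to avoid overlooking a family.
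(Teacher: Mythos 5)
This statement is recalled in the paper's Preliminaries with a citation to \cite{tree1} and is not proved there, so there is no in-paper argument to compare against; your proposal has to stand on its own. Parts (a) and (b) do stand: for (a), the injectivity of $u\mapsto d(u,w)$ forcing the distance values to be exactly $0,1,\dots,n-1$, together with the argument that each $v_i$ must be adjacent to $v_{i-1}$ and that any chord $v_iv_j$ with $j\ge i+2$ would contradict $d(w,v_j)=j$, is a complete and correct proof; for (b), deriving $\diam(G)\le 1$ from $n-1\le n-\diam(G)$ via Theorem~\ref{dim_bounds} is clean and correct. Your reduction of (c) to diameter-$2$ graphs is likewise right, as is the key reformulation that in a diameter-$2$ graph a set $S$ resolves if and only if the vertices outside $S$ have pairwise distinct neighborhoods within $S$; the backward direction for the three families is only sketched, but the plan (twin-class counting plus the observation that in $K_s+(K_1\cup K_t)$ the apex is the \emph{unique} resolver of a $K_s$-vertex versus a $K_t$-vertex, which pins the dimension at $n-2$) is sound and would complete routinely.

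The genuine gap is the forward direction of (c), which you leave as a program rather than a proof. In your own reformulation, $\dim(G)=n-2$ means: some pair of vertices is non-twin, yet for \emph{every} triple $\{x,y,z\}$ some pair among them has equal neighborhoods off the triple. As your apex example itself shows, the obstructing pairs need not be twins --- they may differ in adjacency exactly at the third deleted vertex --- so the structure to classify is governed by the ternary relation ``$N(x)$ and $N(y)$ agree outside $\{x,y,z\}$,'' not by the twin-equivalence classes alone, and counting true/false twin classes cannot by itself decide which configurations survive. Your outline asserts that the delete-two-but-never-three condition ``collapses the class structure into exactly three patterns,'' but that collapse \emph{is} the theorem: one must rule out every other diameter-$2$ configuration (mixed true/false twin classes, more than two nontrivial classes, several singleton classes interacting as the apex does, etc.), and this case analysis --- which occupies substantial space in Chartrand, Eroh, Johnson, and Oellermann's original paper --- is nowhere carried out or even enumerated in your proposal. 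So: (a) and (b) are proved, (c) backward would complete routinely, but (c) forward is missing its core argument and, as written, the proposal does not establish that no fourth family exists.
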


\begin{theorem}\emph{\cite{tree1, tree2, tree3}}\label{dim_tree}
For a tree $T$ that is not a path, $\dim(T)=\sigma(T)-ex(T)$.
\end{theorem}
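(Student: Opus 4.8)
The plan is to prove the two inequalities $\dim(T) \ge \sigma(T) - ex(T)$ and $\dim(T) \le \sigma(T) - ex(T)$ separately, after first recording the bookkeeping identity $\sum_v ter_T(v) = \sigma(T)$, where the sum runs over all exterior major vertices $v$. This identity holds because $T$ is not a path: starting from any end vertex $\ell$ and walking along the unique path away from $\ell$, one reaches a first vertex of degree at least three, which is the unique major vertex of which $\ell$ is a terminal vertex; hence the terminal vertices partition the $\sigma(T)$ end vertices among the $ex(T)$ exterior major vertices, and $\sum_v (ter_T(v)-1) = \sigma(T) - ex(T)$. For each exterior major vertex $v$ I will call the path from $v$ to one of its terminal vertices a \emph{leg} of $v$; two legs attached to distinct exterior major vertices are vertex-disjoint, since every degree-two vertex on a leg of $v$ reaches a major vertex in only one direction, namely toward $v$.

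For the lower bound, I would argue that any resolving set $S$ must meet, in a vertex other than $v$, at least $ter_T(v)-1$ of the legs at each exterior major vertex $v$. Indeed, if two legs $L_1,L_2$ at $v$ both avoided $S$, then for a suitable $k$ the vertex at distance $k$ from $v$ along $L_1$ and the vertex at distance $k$ from $v$ along $L_2$ would be equidistant from every vertex of $S$, because in a tree every path from such a vertex to a vertex outside its leg passes through $v$; this contradicts $S$ being resolving. Since legs at distinct exterior major vertices are vertex-disjoint, the forced landmarks are all distinct, and summing $ter_T(v)-1$ over all $v$ together with the identity above gives $\dim(T) \ge \sigma(T) - ex(T)$.

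For the upper bound, I would exhibit an explicit resolving set of the claimed size: for each exterior major vertex $v$, select all but one of its terminal vertices, and let $W$ be the union of these choices, so $|W| = \sum_v (ter_T(v)-1) = \sigma(T) - ex(T)$. To verify that $W$ resolves $T$, I would use the elementary fact that in a tree a vertex $w$ fails to resolve a pair $x \ne y$ precisely when the path from $w$ to the $x$--$y$ path meets it at its midpoint vertex, which forces $d(x,y)$ to be even. Given any putatively unresolved pair, I would locate such a forbidden midpoint and derive a contradiction by producing a terminal vertex in $W$ lying strictly on one side of it, using that at most one leg per exterior major vertex was omitted from $W$.

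The main obstacle I expect is this verification step in the upper bound: converting the ``midpoint'' criterion into a clean contradiction requires a case analysis according to whether $x$ and $y$ lie on a common leg, on distinct legs of the same major vertex, or in different branches of $T$, and one must check that omitting a single terminal vertex per exterior major vertex never leaves a symmetric pair unresolved. By contrast, the counting identity and the lower bound are routine once the leg terminology and the disjointness of legs are in place.
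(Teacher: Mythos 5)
The statement you are proving is quoted in the paper as a known preliminary (cited to Chartrand et al., Khuller et al., and Poisson--Zhang) with no internal proof, so the comparison is with the standard argument in those sources --- and your outline is exactly that standard route: the bookkeeping identity $\sum_{v} ter_T(v)=\sigma(T)$, a lower bound forcing $ter_T(v)-1$ landmarks among the legs at each exterior major vertex, and an upper bound via the all-but-one-terminal set (which is precisely the shape of the minimum resolving sets characterized in the paper's Theorem~\ref{zhang_tree}). Your identity and your lower bound are complete and correct as written: with $k=1$ the two neighbors of $v$ on two $S$-free legs are equidistant from every vertex of $S$ (including $v$ itself), and the vertex-disjointness of legs at distinct exterior major vertices, which you justify correctly, makes the count additive.

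The one genuine gap is the step you flag yourself: you never verify that $W$ resolves $T$. The midpoint (gate) criterion you state is correct, but the assertion that one can always ``produce a terminal vertex in $W$ lying strictly on one side'' of the midpoint is the entire content of the upper bound, not a reduction of it. The gap is closable along your lines, and more cleanly than the three-case split you anticipate. Suppose $x\neq y$ were unresolved by all of $W$; then every $w\in W$ has the midpoint $m$ of the $x$--$y$ path as its gate, so the components $C_x$ and $C_y$ of $T-m$ containing $x$ and $y$ contain no vertex of $W$, while each contains a leaf of $T$ (a vertex farthest from $m$), and every leaf avoiding $W$ is the \emph{unique} omitted terminal vertex of its exterior major vertex. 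Case 1: some major vertex lies in $C_x\cup C_y$, say in $C_x$; choose such a $u$ maximizing $d(m,u)$. Each of the $\deg(u)-1\ge 2$ branches at $u$ pointing away from $m$ stays inside $C_x$ and contains no major vertex (by maximality), hence is a path ending at a terminal vertex of $u$; so $ter_T(u)\ge 2$, at most one of these terminals is omitted, and $W\cap C_x\neq\emptyset$, a contradiction. Case 2: no major vertex lies in $C_x\cup C_y$; then $C_x$ and $C_y$ are bare paths hanging at $m$. If $\deg(m)=2$ then $T=C_x\cup\{m\}\cup C_y$ is a path, contradicting the hypothesis; if $\deg(m)\ge 3$ then $m$ is major and the two leaves $z_x\in C_x$, $z_y\in C_y$ are two distinct omitted terminal vertices of the same exterior major vertex $m$, contradicting that only one terminal per exterior major vertex is omitted. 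So your approach does not fail --- but as submitted, the upper bound is a plan rather than a proof, and this closing argument (or an equivalent one) is what must be supplied.
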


\begin{theorem}\emph{\cite{P_dim}}\label{dim_Petersen}
For the Petersen graph $\mathcal{P}$, $\dim(\mathcal{P})=3$.
\end{theorem}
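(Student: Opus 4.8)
The plan is to prove the equality by establishing the two matching bounds $\dim(\mathcal{P}) \ge 3$ and $\dim(\mathcal{P}) \le 3$ separately, exploiting that $\mathcal{P}$ is a vertex-transitive graph on $n = 10$ vertices with $\diam(\mathcal{P}) = 2$ (any two non-adjacent vertices of $\mathcal{P}$ share a common neighbor).

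For the lower bound, I would invoke Theorem~\ref{dim_bounds} directly. With $n = 10$ and $d = 2$, the quantity $f(n,d)$ is the least positive integer $k$ with $k + 2^k \ge 10$; since $1 + 2^1 = 3$ and $2 + 2^2 = 6$ are both less than $10$ while $3 + 2^3 = 11 \ge 10$, we get $f(10,2) = 3$, and hence $\dim(\mathcal{P}) \ge 3$. Equivalently, and more transparently, one can argue by a pigeonhole count: if $S$ were a resolving set with $|S| = 2$, then since $\diam(\mathcal{P}) = 2$ every code $\code_S(x)$ would lie in $\{0,1,2\}^2$, allowing at most $9$ distinct codes; as $\mathcal{P}$ has $10$ vertices, two of them would share a code, contradicting the resolving property. (Resolving sets of size $1$ are excluded by Theorem~\ref{dim_characterization}(a), since $\mathcal{P}$ is not a path.)

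For the upper bound, I would exhibit an explicit resolving set of size $3$. Label $\mathcal{P}$ in the standard way, with an outer $5$-cycle $u_1 u_2 u_3 u_4 u_5 u_1$, an inner pentagram on $v_1, \dots, v_5$ in which $v_i$ is adjacent to $v_{i+2}$ (indices taken modulo $5$), and spokes $u_i v_i$. Taking $S = \{u_1, v_2, v_3\}$, one computes each distance using $\diam(\mathcal{P}) = 2$, so that every entry is $0$, $1$, or $2$, with $1$ occurring exactly for neighbors. For instance $\code_S(u_1) = (0,2,2)$, $\code_S(u_2) = (1,1,2)$, and $\code_S(u_4) = (2,2,2)$. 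Carrying out the remaining seven computations shows that all ten codes are pairwise distinct, so $S$ resolves $\mathcal{P}$ and $\dim(\mathcal{P}) \le 3$.

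The two bounds together give $\dim(\mathcal{P}) = 3$. The lower bound is essentially immediate from Theorem~\ref{dim_bounds}, so the only real work is the upper bound, and there the main (though routine) obstacle is choosing the three vertices cleverly: because $\mathcal{P}$ is highly symmetric, many natural triples --- for example, three consecutive outer vertices --- fail to resolve, producing collisions such as two distinct vertices both receiving the all-$2$'s code $(2,2,2)$. One can shorten the verification by appealing to vertex- and edge-transitivity to cut down the number of cases, but an explicit code computation for a well-chosen mixed (outer/inner) triple is the cleanest route.
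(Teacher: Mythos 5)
Your proof is correct. Note that the paper does not actually prove this statement: Theorem~\ref{dim_Petersen} is recalled from the reference \cite{P_dim} as a known preliminary, so there is no in-paper argument to compare against. Your two-bound argument is the standard self-contained proof, and both halves check out. For the lower bound, either route works: $f(10,2)=3$ in Theorem~\ref{dim_bounds}, or the pigeonhole count that a $2$-set yields at most $3^2=9$ codes in $\{0,1,2\}^2$ for $10$ vertices (size $1$ being excluded since $\mathcal{P}$ is not a path). For the upper bound, I verified your set $S=\{u_1,v_2,v_3\}$ directly: with the standard labeling the ten codes are $\code_S(u_1)=(0,2,2)$, $\code_S(u_2)=(1,1,2)$, $\code_S(u_3)=(2,2,1)$, $\code_S(u_4)=(2,2,2)$, $\code_S(u_5)=(1,2,2)$, $\code_S(v_1)=(1,2,1)$, $\code_S(v_2)=(2,0,2)$, $\code_S(v_3)=(2,2,0)$, $\code_S(v_4)=(2,1,2)$, $\code_S(v_5)=(2,1,1)$, which are pairwise distinct; your side remark that three consecutive outer vertices fail is also accurate (with $\{u_1,u_2,u_3\}$, both $u_5$ and $v_1$ receive $(1,2,2)$, and both $v_4$ and $v_5$ receive $(2,2,2)$). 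Incidentally, your mixed outer/inner triple induces a disconnected subgraph, which is consistent with the paper's later Porism that every minimum resolving set $B$ of $\mathcal{P}$ has $\mathcal{P}[B]$ empty.
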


\begin{theorem}\emph{\cite{wheel1, wheel2}}\label{dim_wheel}
For $n \ge 3$, let $W_{n}=C_n+K_1$ be the wheel graph on $n+1$ vertices. Then
\begin{equation*}
\dim(W_{n})=\left\{
\begin{array}{cl}
3 & \mbox{ if } n\in\{3,6\},\\
\lfloor \frac{2n+2}{5}\rfloor & \mbox{ otherwise.}
\end{array}
\right.
\end{equation*}
\end{theorem}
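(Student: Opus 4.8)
The statement is a known result cited from \cite{wheel1, wheel2}; here is how I would reconstruct a proof. Write $V(W_n)=\{c\}\cup\{v_1,\dots,v_n\}$, where $c$ is the hub adjacent to every $v_i$ and the $v_i$ form the cycle $C_n$ in the natural cyclic order (indices mod $n$). For $n=3$ we have $W_3=K_4$, so $\dim(W_3)=3$ by Theorem~\ref{dim_characterization}(b); assume henceforth $n\ge 4$. Then the distances are completely explicit: $d(c,v_i)=1$ for all $i$, $d(v_i,v_j)=1$ when $v_iv_j\in E(C_n)$, and $d(v_i,v_j)=2$ otherwise. In particular $\diam(W_n)=2$, and the key structural fact is that the hub is \emph{distance-blind} on the rim: since $d(c,v_i)=1$ for every $i$, the vertex $c$ resolves no pair of rim vertices. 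A short argument then shows that some minimum resolving set avoids $c$ (deleting $c$ leaves every rim pair resolved, and the only pairs that can become unresolved are those of the form $\{c,v_j\}$, which would force the remaining landmarks to lie in $N(v_j)\cap V(C_n)=\{v_{j-1},v_{j+1}\}$; this cannot happen once at least three rim landmarks are present), so I would reduce to resolving sets $S\subseteq\{v_1,\dots,v_n\}$ and dispose of the few remaining small cases by hand.

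Fix such an $S$ with $|S|=k$ and record, for each rim vertex $v_j\notin S$, its set of landmark neighbors $A_j=S\cap\{v_{j-1},v_{j+1}\}$. Because all nonzero rim distances are $1$ or $2$, two rim vertices outside $S$ receive the same code precisely when $A_j=A_{j'}$, while every vertex of $S$ is automatically resolved (it carries a $0$ in its own coordinate) and the hub (code all $1$'s) is resolved from $v_j$ as soon as $A_j\ne S$. Listing the landmarks cyclically partitions the rim into \emph{gaps} of consecutive non-landmarks; a gap of length $g$ between consecutive landmarks $a$ (left) and $b$ (right) produces the neighbor-sets $\{a,b\}$ if $g=1$; $\{a\},\{b\}$ if $g=2$; $\{a\},\emptyset,\{b\}$ if $g=3$; and at least two interior vertices with $A_j=\emptyset$ if $g\ge 4$. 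From this I read off the exact resolving conditions: (C1) every gap has length at most $3$; (C2) at most one gap has length $3$ (two would create two vertices with empty neighbor-set, hence the same code); (C3) no two consecutive gaps both have length $\ge 2$ (the landmark they share would acquire two distinct neighbors carrying the same singleton set); and, only when $k=2$, no gap of length $1$ (such a vertex is adjacent to both landmarks, i.e.\ to all of $S$, so its code equals that of $c$). A direct check of the remaining pairs shows (C1)--(C3) are also sufficient once $k\ge 3$.

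With the constraints in hand the problem becomes a clean extremal count: among $k$ cyclically arranged gaps obeying (C1)--(C3), maximize $n=k+\sum g_i$. One makes the gaps of length $\ge 2$ pairwise nonadjacent, so there are at most $\lfloor k/2\rfloor$ of them, takes each such gap to have length $2$ except for a single length-$3$ gap, and fills the remaining gaps with length $1$; this yields the maximum resolvable order $N(k)=2k+\lfloor k/2\rfloor+1$ for $k\ge 3$, while the prohibition on length-$1$ gaps forces the corrected value $N(2)=5$. Hence $\dim(W_n)=\min\{k:N(k)\ge n\}$, and inverting this (routine casework on the parity of $k$) gives $\dim(W_n)=\lfloor (2n+2)/5\rfloor$ for $n\ge 7$, together with the directly verified values $\dim(W_4)=\dim(W_5)=2$. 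The genuine exception is $n=6$: the formula would give $2$, but $N(2)=5<6$ forces $\dim(W_6)\ge 3$, and an explicit three-element rim set resolves $W_6$, so $\dim(W_6)=3$.

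The main obstacle is not the extremal count but pinning down the resolving conditions exactly---in particular noticing the hub-collision produced by a length-$1$ gap when only two landmarks are present. This single phenomenon is what breaks the clean formula at $n=6$ (the $n=3$ anomaly being merely that $W_3=K_4$), so the crux of a careful write-up is to prove that (C1)--(C3) are necessary and sufficient for $k\ge 3$ and to treat $k\le 2$ and the small orders $n\le 6$ by hand.
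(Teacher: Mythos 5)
This theorem appears in the paper's preliminaries as a quoted result from \cite{wheel1, wheel2} with no proof given, so there is nothing in the paper itself to compare against. Your reconstruction is correct and is essentially the classical argument of the cited sources: reduce to landmark sets on the rim, characterize resolving sets by the gap conditions (every gap of length at most $3$, at most one of length $3$, no two consecutive gaps of length at least $2$, and the $k=2$ hub-collision rule that produces the $n=6$ exception), and invert the resulting extremal bound $N(k)=2k+\lfloor k/2\rfloor+1$ to obtain $\lfloor (2n+2)/5\rfloor$, with $n=3$ handled as $W_3=K_4$.
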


\begin{theorem}\emph{\cite{bouquet}}\label{dim_bouquet}
For $m \ge 2$, let $B_m$ be a bouquet of $m$ cycles with a cut-vertex (i.e., the vertex sum of $m$ cycles at one common vertex). If $x$ is the number of even cycles of $B_m$, then
\begin{equation*}
\dim(B_m)=\left\{
\begin{array}{ll}
m & \mbox{ if } x=0,\\
m+x-1 & \mbox{ if } x \ge 1.
\end{array}\right.
\end{equation*}
\end{theorem}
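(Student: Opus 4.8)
The plan is to exploit the cut-vertex $c$ of $B_m$, which separates the cycles, so that every cross-cycle distance factors through $c$. Write the $m$ cycles as $C^{(1)},\dots,C^{(m)}$ meeting at $c$, and in the cycle $C^{(i)}$ of length $n_i$ label the vertices $c=v^{(i)}_0,v^{(i)}_1,\dots,v^{(i)}_{n_i-1}$ cyclically, so that $d(c,v^{(i)}_s)=\min\{s,\,n_i-s\}$. The first thing to record is that whenever $u,w$ lie in different cycles one has $d(u,w)=d(u,c)+d(c,w)$; consequently any landmark $z$ outside the cycle of $u$ sees $u$ only through $d(u,c)$, since $d(u,z)=d(u,c)+d(c,z)$. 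I would call a vertex $u\in C^{(i)}$ \emph{confused} (relative to a resolving set $S$) if $d(u,z)=d(u,c)+d(c,z)$ holds for every $z\in S\cap C^{(i)}$ as well; for such a $u$ the entire code $\code_S(u)$ depends only on $d(u,c)$. This notion is the central device.

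Two structural facts drive the lower bound. First (within-cycle resolution): two vertices of $C^{(i)}$ at equal distance from $c$ can only be separated by a landmark lying inside $C^{(i)}$, since every outside landmark and $c$ itself assign them equal distances. Applying this to the two neighbors $v^{(i)}_1,v^{(i)}_{n_i-1}$ of $c$ forces $|S\cap(C^{(i)}\setminus\{c\})|\ge 1$ for each $i$, which already gives $\dim(B_m)\ge m$; more generally $(S\cap C^{(i)})\cup\{c\}$ must be a resolving set of the cycle $C^{(i)}$ in its own right. Second (cross-cycle collisions): if two distinct confused vertices sit at the same distance $r$ from $c$, their codes both equal the pattern $\big(r+d(c,z)\big)_{z\in S}$ and hence coincide, so $S$ fails to resolve; in particular \emph{at most one vertex of $B_m$ can be a confused vertex at distance $1$ from $c$}.

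Evenness enters exactly here. If an even cycle $C^{(j)}=C_{2k_j}$ carries only one internal landmark $v^{(j)}_t$, then resolving $C^{(j)}$ together with $c$ forces $t\ne k_j$ (the landmark cannot be antipodal to $c$), and an elementary determination of which shortest paths pass through $c$ shows that a single landmark at $v^{(j)}_t$ leaves confused precisely the arc on the far side of $c$, at distances $1,\dots,\lfloor n_j/2\rfloor-t$ from $c$; since $t\le k_j-1$, this includes a confused vertex at distance $1$. By the cross-cycle observation at most one cycle may contain such a vertex, so at most one even cycle can survive with a single internal landmark, and every other even cycle needs at least two. Summing $|S\cap(C^{(i)}\setminus\{c\})|\ge 1$ over all $i$ and adding the forced second landmark in the remaining $x-1$ even cycles yields $\dim(B_m)\ge m+x-1$.

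For the matching construction I would place one landmark at a farthest vertex $v^{(i)}_{k_i}$ in each odd cycle, two landmarks including the antipode $v^{(j)}_{k_j}$ in all but one even cycle, and a single landmark at $v^{(j)}_{k_j-1}$ in one distinguished even cycle. A landmark at a farthest/antipodal vertex is exactly the choice that creates no confused vertices, so the only confused vertex in the whole graph is the distance-$1$ vertex produced by the lone $v^{(j)}_{k_j-1}$; with no second confused vertex to collide with, the set resolves, and its size is $(m-x)+2(x-1)+1=m+x-1$. The case $x=0$ then falls out, since no even cycle contributes the distance-$1$ confusion and $m$ landmarks at farthest vertices already resolve $B_m$. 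The step demanding the most care—and the main obstacle—is the bookkeeping that converts the two local principles into the exact count: pinning down the confused arc generated by a single landmark at $v_t$, and verifying the tension that antipodal placement is forbidden for within-cycle resolution yet required to kill confusion, which is precisely what makes each extra even cycle cost a second landmark.
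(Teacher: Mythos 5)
The paper does not prove Theorem~\ref{dim_bouquet}; it is imported from \cite{bouquet} without proof, so there is no in-paper argument to compare against line by line. Your proposal is, on its own, a correct and essentially complete proof, and its skeleton matches the structure the paper later quotes from the same source: your two ``structural facts'' are precisely Lemma~\ref{lemma_bouquet}(a) and (b) --- part (a) is your observation that the two neighbors of the cut-vertex $c$ force an internal landmark in every cycle, and part (b) (any two even cycles carry at least $3$ landmarks between them) is exactly what your confused-vertex device establishes, since a lone landmark in an even cycle cannot sit at the antipode of $c$ (else $\{c,v_t\}$ fails to resolve the mirror pairs) and any non-antipodal placement leaves a confused vertex at distance $1$ from $c$, and two such vertices in distinct cycles receive identical codes. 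Your arithmetic then gives $|S|\ge (m-x)+2(x-1)+1=m+x-1$ for $x\ge 1$ and $|S|\ge m$ for $x=0$, and the antipodal/near-antipodal construction meets the bound. One routine step you leave implicit in the upper bound: cross-cycle pairs $u,w$ with $d(u,c)\neq d(w,c)$ are not covered by the confusion analysis, but they are resolved automatically because, taking $z\in S$ internal to the cycle of the vertex \emph{farther} from $c$ (such a $z$ exists in every cycle in your construction), $d(u,z)=d(u,c)+d(c,z)>d(w,c)+d(c,z)\ge d(w,z)$; with that sentence added, the verification that a single surviving confused vertex is harmless (against non-confused vertices it always has a strict witness, and within its own cycle the set $(S\cap C^{(i)})\cup\{c\}$ resolves) is complete. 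So the proposal is sound and, in substance, reconstructs the proof that the cited source's lemmas encode.
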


\begin{theorem}\emph{\cite{K_dim}}\label{dim_multipartite}
For $k \ge 2$, let $G=K_{a_1, a_2, \ldots, a_k}$ be a complete $k$-partite graph of order $n=\sum_{i=1}^{k}a_i$. Let $s$ be the number of partite sets of $G$ consisting of exactly one element. Then
\begin{equation*}
\dim(G)=\left\{
\begin{array}{ll}
n-k & \mbox{ if } s=0,\\
n+s-k-1 & \mbox{ if } s \neq 0.
\end{array}
\right.
\end{equation*}
\end{theorem}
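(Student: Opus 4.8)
The plan is to bound $\dim(G)$ from below and above by the same quantity, exploiting the very rigid distance structure of a complete multipartite graph. First I would record that $\diam(G) \le 2$: two vertices lying in the same partite set are non-adjacent but joined through any vertex of another part, hence at distance $2$, while two vertices in different parts are adjacent, hence at distance $1$. Consequently, for a vertex $z$ outside a given part $V_i$ every vertex of $V_i$ is at distance $1$ from $z$, and for $z \in V_i$ the other vertices of $V_i$ are at distance $2$ from $z$. In particular any two vertices $u,w$ of the same part are twins, so $S \cap \{u,w\} \neq \emptyset$ for every resolving set $S$, as recalled in Section~1.

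For the lower bound I would apply the twin observation part by part. Within a part $V_i$ at most one vertex may be excluded from $S$, forcing $|S \cap V_i| \ge a_i - 1$; summing over all $k$ parts gives $\dim(G) \ge \sum_{i=1}^{k}(a_i - 1) = n-k$, which already settles the case $s=0$ once the matching construction is in hand. When $s \ge 2$ there is an extra constraint: the $s$ singleton parts consist of vertices adjacent to everything, hence mutually twins, so at most one of them may be omitted from $S$. Since singleton parts contribute nothing to $\sum(a_i-1)$, this costs an additional $s-1$, yielding $\dim(G) \ge (n-k)+(s-1) = n+s-k-1$ (a bound that also holds, vacuously, for $s=1$).

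For the upper bound I would exhibit an explicit resolving set: delete one vertex from each part and then, if $s \ge 1$, return all but one of the deleted singleton vertices; equivalently, $S$ omits one vertex from each non-singleton part and one vertex from a single singleton part. Its cardinality is $n-k$ when $s \in \{0,1\}$ and $n+s-k-1$ in general, matching the lower bound. It remains to verify that $S$ resolves $G$. Each vertex kept in $S$ is the unique vertex at distance $0$ from itself, so retained vertices are pairwise distinguished and are distinguished from every omitted vertex (whose coordinates are all $1$ or $2$). For two omitted vertices $r_i, r_j$ from distinct parts with, say, $a_i \ge 2$, some retained $z \in S \cap V_i$ satisfies $d(r_i,z)=2 \neq 1 = d(r_j,z)$; this covers every omitted pair except a pair of omitted singletons, and the construction leaves at most one singleton omitted, so no such bad pair occurs.

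The crux of the statement—and the step I expect to be the main obstacle—is isolating exactly why singleton parts behave differently. The subtlety is that an omitted vertex of a singleton part sits at distance $1$ from every vertex of $S$, so its code is the all-ones vector and carries no distinguishing information; two such omitted singletons would be indistinguishable. Recognizing that this is the \emph{only} obstruction to the clean bound $n-k$, and that it is removed at the precise cost of $s-1$, is what forces the two-case formula. Once this point is pinned down, both the lower-bound count and the verification of the construction fall into place routinely.
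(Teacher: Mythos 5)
Your proof is correct. Note that the paper offers no proof of this statement at all---it appears in the Preliminaries as Theorem~\ref{dim_multipartite}, quoted from the cited reference \cite{K_dim}---and your argument is the standard one used there: the lower bound from the twin classes (each part $V_i$ forcing $|S\cap V_i|\ge a_i-1$, plus the union of the singleton parts forming one further twin class of adjacent twins, costing $s-1$ more), matched by the explicit resolving set that omits one vertex from every non-singleton part and exactly one singleton vertex, with the verification hinging correctly on the fact that an omitted singleton has the all-ones code while every other omitted vertex has a $2$ in a coordinate indexed by a retained vertex of its own part.
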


\begin{proposition}\emph{\cite{Cartesian_dim}}\label{dim_grid}
For the grid graph $G=P_s \square P_t$ ($s,t \ge 2$), $\dim(G)=2$.
\end{proposition}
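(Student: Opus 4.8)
The plan is to establish the two inequalities $\dim(G)\ge 2$ and $\dim(G)\le 2$ separately. For the lower bound, note that for $s,t\ge 2$ the grid contains the $4$-cycle on $(1,1),(1,2),(2,2),(2,1)$, so $G$ is not a path; hence $\dim(G)\ge 2$ by Theorem~\ref{dim_characterization}(a). All the real (though routine) work lies in the upper bound, where I would exhibit an explicit resolving set of size $2$.

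First I would fix coordinates, writing $V(G)=\{(i,j): 1\le i\le s,\ 1\le j\le t\}$, with $(i,j)$ adjacent to $(i',j')$ exactly when $|i-i'|+|j-j'|=1$. The key fact, which I would state and use, is the standard distance formula for a Cartesian product of two paths, namely $d\big((i,j),(i',j')\big)=|i-i'|+|j-j'|$; this follows from the general identity $d_{H_1\square H_2}\big((a,b),(a',b')\big)=d_{H_1}(a,a')+d_{H_2}(b,b')$ together with $d_{P_n}(a,a')=|a-a'|$.

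Next I would take the two corner vertices $S=\{(1,1),(s,1)\}$ (distinct since $s\ge2$) and compute, for an arbitrary vertex $(i,j)$, its code
\[
\code_S\big((i,j)\big)=\big(\,i+j-2,\ s-i+j-1\,\big).
\]
To verify that $S$ resolves $G$, I would show this code determines $(i,j)$ uniquely: writing $a=i+j-2$ and $b=s-i+j-1$, the differences give $i=\tfrac{1}{2}(a-b+s+1)$ and $j=\tfrac{1}{2}(a+b-s+3)$, so distinct vertices receive distinct codes. Hence $S$ is a resolving set and $\dim(G)\le 2$, which combined with the lower bound yields $\dim(G)=2$.

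The only points requiring genuine care are confirming the product distance formula (so that the code is exactly the pair of $L^1$-distances to the two corners) and checking that the resulting linear system in $i,j$ is invertible; both are elementary, so I do not anticipate a substantive obstacle. One could equally well choose an adjacent pair of corners or any two opposite corners; the choice $\{(1,1),(s,1)\}$ is convenient because it makes the recovery of $(i,j)$ transparent.
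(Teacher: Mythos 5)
Your main argument is correct and essentially coincides with the verification the paper itself carries out: the proposition is quoted from \cite{Cartesian_dim}, but the paper reproves the upper bound inside the $(\Leftarrow)$ direction of Lemma~\ref{grid_resolving}, where your set $\{(1,1),(s,1)\}$ is the paper's $S_2$ and your recovery of $(i,j)$ from the code is the same invertible linear system (the sums $a+b$ and differences $a-b$ of the coordinates determine the vertex). One correction to your closing remark, though: two \emph{opposite} (diagonal) corners do \emph{not} resolve the grid. For $S=\{(1,1),(s,t)\}$ the code of $(i,j)$ is $\bigl(i+j-2,\ s+t-i-j\bigr)$, both entries functions of $i+j$ alone, so for instance $(1,2)$ and $(2,1)$ receive the same code; in fact the paper's Lemma~\ref{grid_resolving} shows that the \emph{only} minimum resolving sets of $P_s \square P_t$ are the four pairs of corners sharing a side. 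Since that parenthetical is not used anywhere in your argument, the proof itself stands as written.
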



\section{Some general results on connected metric dimension}

In this section, we record some useful observations involving the connected metric dimension of graphs. We observe the relation between the resolving diameter and the resolving radius of a graph. We show the existence of a pair $(G,v)$ such that $\cdim_G(v)$ takes all positive integer values from $\dim(G)$ to $|V(G)|-1$, as $v$ varies in a fixed graph $G$. We characterize pairs $(G, v)$ for which $\cdim_G(v)=1$ and $\cdim_G(v)=|V(G)|-1$, respectively. We begin with some useful observations for general graphs. We first recall the following well-known result.

\begin{lemma}\label{cutvertex}
Every connected graph of order at least two contains at least two vertices that are not cut-vertices.
\end{lemma}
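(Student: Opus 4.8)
The plan is to reduce the statement to the corresponding fact about trees by passing to a spanning tree of $G$. Since $G$ is connected, I would first fix a spanning tree $T$ of $G$. The key claim is that every leaf (degree-one vertex) of $T$ is a non-cut-vertex of $G$. To see this, let $v$ be a leaf of $T$; then $T-v$ is a tree on $|V(G)|-1 \ge 1$ vertices, hence connected, and $T-v$ is a spanning subgraph of $G-v$. Therefore $G-v$ is connected, and $v$ is not a cut-vertex of $G$.

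Next I would establish that $T$ has at least two leaves. Since $T$ is a tree of order $n \ge 2$, one can argue this by a degree-sum count: $\sum_{u \in V(T)} \deg_T(u) = 2(n-1)$, and no vertex has degree $0$ (as $T$ is connected with $n \ge 2$); thus, if at most one vertex had degree one, the degree sum would be at least $2(n-1)+1$, a contradiction. Alternatively, the two endpoints of a longest path in $T$ are readily seen to be leaves, since any neighbor of such an endpoint already lies on the path. Combining this with the previous claim, the (at least two) leaves of $T$ furnish at least two distinct non-cut-vertices of $G$, which is exactly the desired conclusion.

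The step requiring the most care is the claim that a leaf of the spanning tree is genuinely a non-cut-vertex of the whole graph $G$, not merely of $T$. The point is that the edges of $G$ not belonging to $T$ can only add adjacencies, so reinstating them cannot disconnect $G-v$; the connectivity of $T-v$ already forces the connectivity of $G-v$. Everything else reduces to the elementary structure of trees and is routine.
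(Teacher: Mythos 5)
Your proof is correct and complete. Note that the paper itself offers no proof of this lemma at all---it is explicitly recalled as a ``well-known result''---so there is no argument to compare against; your spanning-tree reduction (leaves of a spanning tree $T$ are non-cut-vertices of $G$ since $T-v$ remains a connected spanning subgraph of $G-v$, and a tree of order $n \ge 2$ has at least two leaves by the degree-sum count $\sum_{u} \deg_T(u) = 2(n-1)$ or by a longest-path argument) is precisely the standard textbook proof, and both of your supporting steps are carried out correctly.
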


\begin{observation}\label{obs}
Let $G$ be a connected graph of order $n \ge 2$. Then
\begin{itemize}
\item[(a)] $\dim(G) \le \cdim(G) \le n-1$, and thus $\cdim(G)=1$ if and only if $G=P_n$, by part (a) of Theorem~\ref{dim_characterization};
\item[(b)] $\cdim(G) \le \cdim_G(v) \le n-1$ for any $v \in V(G)$;
\item[(c)] $\cdim(G)=\dim(G)$ if and only if there exists a minimum resolving set $S$ of $G$ such that $S$ induces a connected subgraph of $G$;
\item[(d)] if $xy \in E(G)$, then $|\cdim_G(x)-\cdim_G(y)| \le 1$.
\end{itemize}
\end{observation}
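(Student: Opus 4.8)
The plan is to prove each of the four parts (a)--(d) of Observation~\ref{obs} in turn, relying mostly on the definitions together with Theorem~\ref{dim_characterization}(a) and Lemma~\ref{cutvertex}.

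For part (a), the inequality $\dim(G) \le \cdim(G)$ is immediate from the definitions, since every connected resolving set is in particular a resolving set, so the minimum over the smaller family (connected ones) cannot be smaller. For the upper bound $\cdim(G) \le n-1$, I would exhibit an explicit connected resolving set of size $n-1$: by Lemma~\ref{cutvertex}, $G$ has a vertex $w$ that is not a cut-vertex, so $V(G) \setminus \{w\}$ induces a connected subgraph, and since $V(G)$ is trivially a resolving set, any set containing all but one vertex still resolves every pair (indeed, for distinct $x,y$, at most one of them equals $w$, so at least one of $x,y$ lies in $V(G)\setminus\{w\}$ and that vertex resolves the pair since it is at distance $0$ from itself but positive distance from the other). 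Finally, the equivalence $\cdim(G)=1 \iff G=P_n$ follows by sandwiching: $\cdim(G)=1$ forces $\dim(G)=1$ (as $\dim(G)\le\cdim(G)=1$ and $\dim(G)\ge 1$ always), and Theorem~\ref{dim_characterization}(a) gives $G=P_n$; conversely a single endpoint of $P_n$ is a connected resolving set of size one.

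Part (b) is essentially bookkeeping. The lower bound $\cdim(G) \le \cdim_G(v)$ holds because $\cdim_G(v)$ minimizes over a subfamily (connected resolving sets containing the specific vertex $v$) of the family over which $\cdim(G)$ minimizes, and the identity $\cdim(G)=\min_{v}\cdim_G(v)$ was already recorded in the introduction. The upper bound $\cdim_G(v) \le n-1$ requires, for each fixed $v$, a connected resolving set of size $n-1$ that contains $v$; the construction from part (a) only guarantees such a set after deleting a non-cut-vertex, which might be $v$ itself, so here I would instead delete a non-cut-vertex distinct from $v$. This is exactly what Lemma~\ref{cutvertex} affords, since it guarantees \emph{two} non-cut-vertices, at least one of which differs from $v$; deleting that one leaves a connected set of size $n-1$ containing $v$ that resolves $G$ as before.

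Part (c) is immediate from part (a) together with the definitions: $\cdim(G)=\dim(G)$ says the minimum connected resolving set has the same size as a minimum resolving set, which is equivalent to the existence of a minimum resolving set that happens to induce a connected subgraph. For part (d), suppose $xy \in E(G)$ and let $S_x$ be a minimum connected resolving set containing $x$, so $|S_x| = \cdim_G(x)$. Then $S_x \cup \{y\}$ contains $y$, still resolves $G$, and still induces a connected subgraph since $y$ is adjacent to $x \in S_x$; hence $\cdim_G(y) \le |S_x|+1 = \cdim_G(x)+1$. By symmetry $\cdim_G(x) \le \cdim_G(y)+1$, which together give $|\cdim_G(x)-\cdim_G(y)| \le 1$. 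None of the parts presents a genuine obstacle; the only point demanding slight care is part (b), where one must delete a non-cut-vertex \emph{other than} $v$, and this is precisely where the full strength of Lemma~\ref{cutvertex} (two such vertices rather than one) is needed.
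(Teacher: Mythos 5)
Your proposal is correct and takes essentially the same approach as the paper's proof: the identical use of Lemma~\ref{cutvertex} (including the observation that its guarantee of \emph{two} non-cut-vertices is exactly what part (b) needs, so one can be chosen distinct from $v$), the same sandwich via $\dim(G)\le\cdim(G)$ and Theorem~\ref{dim_characterization}(a) for the $\cdim(G)=1$ characterization, and the same $S\cup\{y\}$ augmentation argument for part (d). The only difference is cosmetic: you spell out why $V(G)\setminus\{w\}$ resolves every pair (one of $x,y$ lies in the set and resolves via distance $0$), a detail the paper leaves implicit.
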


\begin{proof}
Let $G$ be a connected graph of order $n \ge 2$.\\

(a) Since a minimum connected resolving set of $G$ is a resolving set of $G$, $\dim(G) \le \cdim(G)$. By Lemma~\ref{cutvertex}, there exists $u\in V(G)$ such that $u$ is not a cut-vertex of $G$; then $S=V(G)-\{u\}$ forms a connected resolving set of $G$; thus $\cdim(G) \le n-1$.\\

(b) For any $v \in V(G)$, $\cdim_G(v) \ge \min_{u \in V(G)}\{\cdim_G(u)\}=\cdim(G)$. Next, let $v \in V(G)$. By Lemma~\ref{cutvertex}, there exist two distinct vertices, say $u$ and $w$, in $G$ that are not cut-vertices. Assume, without loss of generality, that $u\neq v$, then $S=V(G)-\{u\}$ is a connected resolving set at $v$ and thus $\cdim_G(v)\leq n-1$.\\

(c) This is immediate from the two definitions. \\

(d) Put the inequality as $-1\leq \cdim_G(x)-\cdim_G(y) \leq 1$, and let $xy \in E(G)$. If $S$ is a minimum connected resolving set of $G$ with $x \in S$, then $S \cup \{y\}$ is a connected resolving set of $G$ containing $y$; thus $\cdim_G(y) \le |S|+1=\cdim_G(x)+1$. Swapping the roles played by $x$ and $y$ yields the other inequality.~\hfill
\end{proof}

\begin{observation}\label{obs_transitive}
Let $G$ be a connected, vertex-transitive graph. Then, for any $v \in V(G)$, $$\cdim_G(v)=\cdim(G).$$
\end{observation}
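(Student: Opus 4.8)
The plan is to exploit the defining property of vertex-transitivity: for any two vertices $u, w \in V(G)$, there is an automorphism $\varphi$ of $G$ with $\varphi(u) = w$. Since graph automorphisms preserve distances, they carry resolving sets to resolving sets and connected subgraphs to connected subgraphs. So the strategy is to show that $\cdim_G(v)$ is actually independent of $v$, and then invoke the definition $\cdim(G) = \min_{u \in V(G)} \{\cdim_G(u)\}$ to conclude these common values all equal $\cdim(G)$.

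First I would fix two arbitrary vertices $v, w \in V(G)$ and an automorphism $\varphi$ with $\varphi(v) = w$. Let $S$ be a minimum connected resolving set at $v$, so $v \in S$, the set $S$ resolves $G$, and $G[S]$ is connected, with $|S| = \cdim_G(v)$. I would then verify that $\varphi(S)$ is a connected resolving set at $w$ of the same cardinality. Cardinality is clear since $\varphi$ is a bijection; $w = \varphi(v) \in \varphi(S)$ is immediate. For the resolving property, given distinct $x, y \in V(G)$, write $x = \varphi(x')$ and $y = \varphi(y')$; because $S$ resolves $x', y'$ there is some $z \in S$ with $d(z, x') \neq d(z, y')$, and since $\varphi$ preserves distances, $d(\varphi(z), x) = d(z, x') \neq d(z, y') = d(\varphi(z), y)$ with $\varphi(z) \in \varphi(S)$. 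For connectedness, $G[\varphi(S)]$ is the image of the connected graph $G[S]$ under an isomorphism, hence connected. Therefore $\cdim_G(w) \le |\varphi(S)| = |S| = \cdim_G(v)$.

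Swapping the roles of $v$ and $w$ (applying $\varphi^{-1}$) gives the reverse inequality $\cdim_G(v) \le \cdim_G(w)$, so $\cdim_G(v) = \cdim_G(w)$ for every pair of vertices. Hence $\cdim_G$ is constant on $V(G)$, and since $\cdim(G)$ is the minimum of this constant function, $\cdim_G(v) = \cdim(G)$ for all $v$.

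There is no real obstacle here; the argument is a routine transport-of-structure along automorphisms. The only point requiring a moment's care is confirming that automorphisms of $G$ preserve the graph distance $d_G$ — which follows since an automorphism maps shortest paths to paths of equal length and is invertible, so it neither lengthens nor shortens distances — and that this forces the resolving property to transfer correctly. Once distance-preservation is in hand, every remaining verification (cardinality, membership of $w$, connectedness of the image) is immediate.
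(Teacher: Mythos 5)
Your proof is correct, and it is exactly the standard transport-of-structure argument the authors had in mind: the paper states this as an Observation with no proof, treating as immediate the fact that distance-preserving automorphisms carry minimum connected resolving sets at $v$ to minimum connected resolving sets at $\varphi(v)$, which is precisely what you verified. Your write-up simply makes explicit the routine checks (cardinality, membership, resolving property, connectedness of the image) that the paper omits.
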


\begin{observation}\label{r=dim}
Let $G$ be a connected graph, and let $v \in V(G)$. Let $\mathfrak{C}(G)$ be the collection of all connected resolving sets $S$ of $G$ with $|S|=\dim(G)$. Then $\cdim_G(v)=\dim(G)$ if and only if $\mathfrak{C}(G)\neq\emptyset$ and $v \in S'$ for some $S'\in \mathfrak{C}(G)$.
\end{observation}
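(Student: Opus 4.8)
The plan is to unwind both definitions directly, since this statement is essentially a reformulation of what the equation $\cdim_G(v)=\dim(G)$ means; consequently the argument should be short, and the chief task is simply to organize the two implications cleanly. The one prior fact I would lean on is the chain $\dim(G) \le \cdim(G) \le \cdim_G(v)$ supplied by parts (a) and (b) of Observation~\ref{obs}, which guarantees that $\cdim_G(v) \ge \dim(G)$ holds for every $v$. This inequality is what forces genuine equality, rather than a mere upper bound, in the backward direction.

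For the backward implication I would assume $\mathfrak{C}(G)\neq\emptyset$ and that $v \in S'$ for some $S'\in \mathfrak{C}(G)$. By the definition of $\mathfrak{C}(G)$, the set $S'$ is a connected resolving set with $|S'|=\dim(G)$, and since $v \in S'$, it is in particular a connected resolving set containing $v$. Hence $\cdim_G(v) \le |S'| = \dim(G)$ by the definition of $\cdim_G(v)$ as a minimum over such sets; combined with $\cdim_G(v) \ge \dim(G)$ this yields $\cdim_G(v)=\dim(G)$.

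For the forward implication I would assume $\cdim_G(v)=\dim(G)$. By definition the value $\cdim_G(v)$ is realized by some resolving set $S'$ that contains $v$ and induces a connected subgraph of $G$, with $|S'|=\cdim_G(v)=\dim(G)$. Such an $S'$ is, by the definition of $\mathfrak{C}(G)$, a member of $\mathfrak{C}(G)$; thus $\mathfrak{C}(G)\neq\emptyset$ and $v \in S'$, as required.

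Because both directions reduce to matching the defining quantities, I anticipate no real obstacle. The only subtlety worth flagging is that the inequality $\cdim_G(v)\ge\dim(G)$ must be invoked explicitly, so that the backward direction delivers equality and not just the bound $\cdim_G(v)\le\dim(G)$; everything else is a direct appeal to the minimality clauses in the two definitions.
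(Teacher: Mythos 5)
Your proof is correct and is exactly the intended argument: the paper states this as an Observation without proof precisely because it follows from unwinding the definitions, and your use of Observation~\ref{obs}(a)(b) to supply the lower bound $\cdim_G(v)\ge\dim(G)$ in the backward direction is the one genuine ingredient needed. Nothing is missing.
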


Next, we show the existence of a graph $G$ and its vertices such that $\cdim_G(v)$ takes all positive integer values on the closed interval $[\dim(G), |V(G)|-1]$ as $v$ varies in $G$.

\begin{proposition}\label{Gab}
There is a graph $G$ such that, for any integer $m$ with $\dim(G) \le m \le |V(G)|-1$, there exists a vertex $v\in V(G)$ satisfying $\cdim_G(v)=m$.
\end{proposition}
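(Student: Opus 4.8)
The plan is to produce a single explicit graph and to compute $\cdim_G(v)$ at every vertex, checking that the resulting values exhaust the interval $[\dim(G),|V(G)|-1]$. Fix an integer $L\ge 1$ and let $G$ be the ``tadpole'' obtained from a triangle on vertices $x,y,z$ by attaching a pendant path $w_1w_2\cdots w_L$ with $w_1$ adjacent to $z$, so that $|V(G)|=L+3$. First I would record the two facts that pin down the endpoints of the target interval. Since $G$ is not a path, Theorem~\ref{dim_characterization}(a) gives $\dim(G)\ge 2$; on the other hand $x$ and $y$ are twin vertices, and the edge $\{x,y\}$ resolves $G$ (the codes with respect to $(x,y)$ are $x=(0,1)$, $y=(1,0)$, $z=(1,1)$, $w_i=(i+1,i+1)$, all distinct). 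Hence $\dim(G)=\cdim(G)=2$, and by Observation~\ref{r=dim} the smallest value $m=\dim(G)=2$ is already realized; indeed $\cdim_G(x)=\cdim_G(y)=2$.

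The key step is to evaluate $\cdim_G$ along the tail. I claim $\cdim_G(z)=2$ as well, via the connected resolving set $\{x,z\}$, and that for each $i$ with $1\le i\le L$ one has $\cdim_G(w_i)=i+2$. For the upper bound I would exhibit the connected set $S_i=\{x,z,w_1,\dots,w_i\}$, whose induced subgraph is a path, and verify that it resolves $G$: the coordinate at $z$ separates the remaining tail vertices $w_{i+1},\dots,w_L$ from one another by their distinct distances to $z$, while the coordinate at $x$ separates $y$ (at distance $1$) from every tail vertex (at distance at least $2$). This yields $\cdim_G(w_i)\le |S_i|=i+2$.

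The heart of the argument, and the step I expect to be the main obstacle, is the matching lower bound $\cdim_G(w_i)\ge i+2$. Here I would combine two observations. Because $x$ and $y$ are twins, every resolving set must contain $x$ or $y$. Because the tail is a pendant path, any connected set containing both $w_i$ and one of $x,y$ must contain an entire path between them, namely $w_i,w_{i-1},\dots,w_1,z$ together with $x$ (respectively $y$), which already numbers $i+2$ vertices. Hence no connected resolving set through $w_i$ can have fewer than $i+2$ vertices, giving $\cdim_G(w_i)=i+2$.

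Finally I would assemble the count. As $i$ ranges over $1,\dots,L$ the values $\cdim_G(w_i)=i+2$ sweep out $3,4,\dots,L+2=|V(G)|-1$, while $\cdim_G(x)=\cdim_G(y)=\cdim_G(z)=2=\dim(G)$. Therefore every integer $m$ with $\dim(G)\le m\le |V(G)|-1$ equals $\cdim_G(v)$ for some $v\in V(G)$, which is exactly the assertion (and $L$ may be taken arbitrarily large). The only genuine verification is the twins-plus-connectivity lower bound above; the resolving checks for $S_i$, $\{x,y\}$, and $\{x,z\}$ are routine distance computations.
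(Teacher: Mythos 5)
Your proof is correct, and it is essentially the paper's own argument: your tadpole is exactly the member $G_{3,L}$ of the family $G_{a,b}$ (clique plus pendant path) used in the paper's proof of Proposition~\ref{Gab}, with the same two-step mechanism of twins in the clique forcing the base value $\dim(G)$ and connectivity forcing the entire tail segment $z,w_1,\ldots,w_i$ to give $\cdim_G(w_i)=i+2$. Your cut-vertex justification of the lower bound is, if anything, slightly more explicit than the paper's one-line claim, but the construction and the computation coincide.
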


\begin{proof}
For $a \ge 3$ and $b \ge 1$, let $G=G_{a,b}$ be the graph obtained from the disjoint union of $K_a$ and $P_b$ by joining an edge between a vertex of $K_a$ and an end vertex of $P_b$ (see Figure~\ref{realization} for $G_{6,5}$). Let $V(K_a)=\{u_0, u_1, u_2, \ldots, u_{a-1}\}$ and let $P_b$ be given by $w_1, w_2, \ldots, w_b$ such that $u_0w_1\in E(G)$; note that $|V(G)|=a+b$.

First, we show that $\dim(G)=a-1$. Let $S$ be any minimum resolving set of $G$. Since any two vertices in $\cup_{i=1}^{a-1}\{u_i\}$ are twin vertices, $|S \cap (\cup_{i=1}^{a-1}\{u_i\})| \ge a-2$. Let $S^*=S \cap (\cup_{i=1}^{a-1}\{u_i\})$. If $|S^*|=a-2$, we can assume, without loss of generality, $S^*=\cup_{i=1}^{a-2}\{u_i\}$. But then $\code_{S^*}(u_0)=\code_{S^*}(u_{a-1})$; thus $\dim(G) \ge a-1$. Since $S=\cup_{i=0}^{a-2}\{u_i\}$ forms a resolving set of $G$ with $|S|=a-1$, we have $\dim(G)=a-1$. 

Second, we show that $\cdim_{G}(v)$ takes all integer values on $[a-1, a+b-1]$ for some $v \in V(G)$; notice that $a-1=\dim(G)$ and $a+b-1=|V(G)|-1$. With $S$ as defined, we have $\cdim_{G}(u_0)=a-1$ by Observation~\ref{r=dim}. Further, we have $\cdim_{G}(w_j)=a+j-1$ for $j \in \{1,2,\ldots, b\}$, since any minimum connected resolving set $S'$ at $w_j$ must satisfy $|S' \cap(\cup_{i=0}^{a-1}\{u_i\})|=a-1$ and $S' \supseteq\{w_1, w_2, \ldots, w_{j-1}, w_j\}$.~\hfill
\end{proof}

\begin{figure}[ht]
\centering
\begin{tikzpicture}[scale=.5, transform shape]

\node [draw, shape=circle, scale=1.3] (1) at  (0,3) {};
\node [draw, shape=circle, scale=1.3] (2) at  (-2.6, 1.5) {};
\node [draw, shape=circle, scale=1.3] (3) at  (-2.6, -1.5) {};
\node [draw, shape=circle, scale=1.3] (4) at  (0, -3) {};
\node [draw, shape=circle, scale=1.3] (5) at  (2.6, -1.5) {};
\node [draw, shape=circle, scale=1.3] (0) at  (2.6, 1.5) {};

\node [draw, shape=circle, scale=1.3] (11) at  (4.6, 1.5) {};
\node [draw, shape=circle, scale=1.3] (22) at  (6.6, 1.5) {};
\node [draw, shape=circle, scale=1.3] (33) at  (8.6, 1.5) {};
\node [draw, shape=circle, scale=1.3] (44) at  (10.6, 1.5) {};
\node [draw, shape=circle, scale=1.3] (55) at  (12.6, 1.5) {};

\node [scale=1.9] at (0,3.55) {$u_1$};
\node [scale=1.9] at (-3.3,1.5) {$u_2$};
\node [scale=1.9] at (-3.3,-1.5) {$u_3$};
\node [scale=1.9] at (0,-3.55) {$u_4$};
\node [scale=1.9] at (3.3,-1.5) {$u_5$};
\node [scale=1.9] at (2.75,2) {$u_0$};

\node [scale=1.9] at (4.6,2) {$w_1$};
\node [scale=1.9] at (6.6,2) {$w_2$};
\node [scale=1.9] at (8.6,2) {$w_3$};
\node [scale=1.9] at (10.6,2) {$w_4$};
\node [scale=1.9] at (12.6,2) {$w_5$};

\draw(1)--(2)--(3)--(4)--(5)--(0)--(1);\draw(1)--(3)--(5)--(1); \draw(2)--(4)--(0)--(2);
\draw(1)--(4); \draw(2)--(5); \draw(3)--(0); \draw(0)--(11)--(22)--(33)--(44)--(55);

\end{tikzpicture}
\caption{$G_{6,5}$}\label{realization}
\end{figure}

Next, we consider the relation between $\rrad(G)$ and $\rdiam(G)$. For the usual eccentricity, $\diam(G) \le 2 \rad(G)$; however, a similar relationship between resolving diameter and resolving radius does not hold.

\begin{proposition}\label{rrad_rdiam}
For a connected graph $G$ of order at least two, $$\rrad(G) \le \rdiam(G) \le \rrad(G)+\diam(G),$$
where both bounds are sharp.
\end{proposition}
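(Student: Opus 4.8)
The plan is to prove the two inequalities separately, and then exhibit graphs achieving equality in each.

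For the lower bound $\rrad(G) \le \rdiam(G)$, there is essentially nothing to do: by definition $\rrad(G) = \cdim(G) = \min_{v} \cdim_G(v)$ while $\rdiam(G) = \max_{v} \cdim_G(v)$, so the minimum of a set of real numbers is at most its maximum. I would state this in one sentence.

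For the upper bound $\rdiam(G) \le \rrad(G) + \diam(G)$, the idea is to take a vertex $v$ realizing the resolving diameter (so $\cdim_G(v) = \rdiam(G)$) and a central vertex $c$ realizing the resolving radius (so $\cdim_G(c) = \rrad(G)$), and then manufacture a connected resolving set containing $v$ out of a minimum connected resolving set at $c$. First I would take a minimum connected resolving set $S$ with $c \in S$ and $|S| = \cdim(G) = \rrad(G)$. Then I would append a shortest $c$--$v$ path $P$ to $S$: since $S$ is connected, $c \in S$, and $P$ is a path from $c$ to $v$, the union $S \cup V(P)$ induces a connected subgraph; it contains $v$; and it is still resolving because it contains the resolving set $S$. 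Counting vertices, $|S \cup V(P)| \le |S| + |V(P) \setminus \{c\}| = \rrad(G) + d(c,v) \le \rrad(G) + \diam(G)$. Hence $\rdiam(G) = \cdim_G(v) \le |S \cup V(P)| \le \rrad(G) + \diam(G)$, which is the claim. The only mild subtlety to handle carefully is the vertex count: the path $P$ shares the vertex $c$ with $S$, so I add at most $d(c,v)$ new vertices, not $d(c,v)+1$; this is what keeps the bound at $\diam(G)$ rather than $\diam(G)+1$.

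The remaining and genuinely nontrivial part is sharpness. Sharpness of the lower bound is easy: any vertex-transitive graph, or more simply a path $P_n$ (where $\rrad = \rdiam = 1$), gives equality $\rrad(G) = \rdiam(G)$. The harder step, which I expect to be the main obstacle, is producing a family where the upper bound is attained (or at least sharp), since this requires a graph in which some peripheral vertex $v$ forces $\cdim_G(v)$ all the way up to $\rrad(G) + \diam(G)$ — meaning every connected resolving set containing $v$ must be essentially as large as a shortest path from the resolving center plus a full minimum connected resolving set, with no overlap savings. I would look to a construction resembling $G_{a,b}$ from Proposition~\ref{Gab}: attaching a long pendant path to a ``dense'' core (such as $K_a$) makes the far end of the path a vertex whose connected resolving sets must contain both a large resolving set inside the core and the entire connecting path, so that $\cdim_G(v)$ grows additively. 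Verifying that such an example hits the bound exactly, rather than merely coming close, is where the careful case analysis would be needed; if exact equality proves awkward, the fallback is to show the gap $\rdiam(G) - \rrad(G)$ can be made arbitrarily large (as the paper announces separately), which already certifies that no better universal bound of the form $\rrad(G) + o(\diam(G))$ can hold.
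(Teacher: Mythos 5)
Your proofs of the two inequalities coincide with the paper's: the lower bound is definitional, and for the upper bound the paper likewise takes a vertex $y$ with $\cdim_G(y)=\rrad(G)$, a minimum connected resolving set $S$ at $y$, and adjoins the vertices of a shortest $y$--$x$ path to get a connected resolving set at the peripheral vertex $x$ of size at most $|S|+\diam(G)$; your care about counting only $d(c,v)$ new vertices (the path sharing $c$ with $S$) is exactly right. Sharpness of the lower bound via a vertex-transitive graph is also the paper's argument, though your parenthetical that $P_n$ has $\rrad=\rdiam=1$ is false for $n\ge 3$: an internal vertex of a path fails to resolve its two neighbors, so $\cdim_{P_n}(v)=2$ there and $\rdiam(P_n)=2>1=\rrad(P_n)$; only $P_2$ (which is vertex-transitive anyway) works.

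The genuine gap is sharpness of the upper bound, which the proposition asserts and which therefore must be \emph{attained}, not merely approached. Your candidate family $G_{a,b}$ provably misses: there $\rrad(G_{a,b})=a-1$, $\rdiam(G_{a,b})=\cdim_{G_{a,b}}(w_b)=a+b-1$, but $\diam(G_{a,b})=b+1$ (from $w_b$ to any $u_i$ with $i\neq 0$), so $\rdiam=\rrad+\diam-1$ for every $a,b$ --- the attachment vertex $u_0$ lies both on the diametral path from $w_b$ and in a minimum connected resolving set, and this one vertex of overlap can never be eliminated in that construction. Your fallback (making $\rdiam-\rrad$ arbitrarily large) does not rescue the claim, since $\diam$ grows along with the gap; it only shows the coefficient of $\diam(G)$ cannot be improved, not that equality holds. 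The paper closes this gap with a different graph: join an end vertex $w_1$ of $P_{n-4}$ to all four vertices of a path $u_1u_2u_3u_4$. Then $\{u_2,u_3\}$ is a connected resolving set, so $\rrad(G)=2$; $\diam(G)=n-4$; and since every $w_j$ is equidistant from all the $u_i$, any connected resolving set containing $w_{n-4}$ must contain all of $w_1,\dots,w_{n-4}$ plus at least two of the $u_i$, whence $\cdim_G(w_{n-4})=n-2=\rrad(G)+\diam(G)$ exactly. The structural point your construction misses is that the diametral path from the peripheral vertex should terminate \emph{at} the resolving center rather than pass through a vertex already counted in it.
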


\begin{proof}
The lower bound is Observation~\ref{obs}(b) and follows directly from the definitions. For the upper bound, let $x,y \in V(G)$ such that $\cdim_G(x)=\rdiam(G)$, $\cdim_G(y)=\rrad(G)$, and $S$ be a minimum connected resolving set at $y$. Let $y=y_0, y_1, \ldots, y_{k-1}, y_k=x$ denote the vertices along a shortest path from $y$ to $x$. Then $S\cup\{y_1,\ldots,y_{k-1},y_k=x\}$ is a connected resolving set at $x$, and $k\leq \diam(G)$ by definition of the diameter. 

For the sharpness of the lower bound, let $G$ be any vertex-transitive graph; then $\rdiam(G)=\rrad(G)$ by Observation~\ref{obs_transitive}.

For the sharpness of the upper bound, let $G$ be the graph obtained from $P_{n-4}$, for $n \ge 6$, by joining an end vertex of $P_{n-4}$ to every vertex of $P_4$ (see Figure~\ref{difficult}). It is easy to check that $\{u_2, u_3\}$ forms a connected resolving set of $G$, and hence $\rrad(G)=\cdim(G)=2$ by Observation~\ref{obs}(a). Notice that $\diam(G)=n-4$, since $u_i,w_1,\ldots,w_{n-4}$ yield a diametral path for $1\leq i\leq 4$. It is also easy to see that $\cdim_G(w_{n-4})=n-2=\rdiam(G)$, since $w_1$ can not resolve $u_i$ from $u_j$.~\hfill
\end{proof}

\begin{figure}[ht]
\centering
\begin{tikzpicture}[scale=.5, transform shape]

\node [draw, shape=circle, scale=1.1] (1) at  (0,3) {};
\node [draw, shape=circle, scale=1.1] (2) at  (0,1) {};
\node [draw, shape=circle, scale=1.1] (3) at  (0,-1) {};
\node [draw, shape=circle, scale=1.1] (4) at  (0,-3) {};

\node [draw, shape=circle, scale=1.1] (a) at  (2,0) {};
\node [draw, shape=circle, scale=1.1] (b) at  (4,0) {};
\node [draw, shape=circle, scale=1.1] (c) at  (6,0) {};
\node [draw, shape=circle, scale=1.1] (d) at  (9.5,0) {};

\node [scale=1.9] at (-2.5,0) {\large $G$:};

\node [scale=1.9] at (-0.6,3) {$u_1$};
\node [scale=1.9] at (-0.6,1) {$u_2$};
\node [scale=1.9] at (-0.6,-1) {$u_3$};
\node [scale=1.9] at (-0.6,-3) {$u_4$};

\node [scale=1.9] at (2.15,0.5) {$w_1$};
\node [scale=1.9] at (4.1,0.5) {$w_2$};
\node [scale=1.9] at (6.1,0.5) {$w_3$};
\node [scale=1.9] at (9.7,0.5) {$w_{n-4}$};

\draw(a)--(1)--(2)--(3)--(4)--(a)--(b)--(c)--(7,0);\draw(2)--(a)--(3);\draw[thick, dotted](7,0)--(8.5,0);\draw(8.5,0)--(d);

\end{tikzpicture}
\caption{A graph $G$ satisfying $\rdiam(G) = \rrad(G)+\diam(G)$.}\label{difficult}
\end{figure}
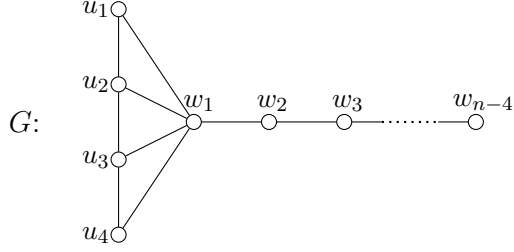

\begin{remark}
For a fixed graph $G$, $\rdiam(G)-\rrad(G)$ can be arbitrarily large. Let $G=G_{a,b}$ as described in the proof of Proposition~\ref{Gab}. Then $\rdiam(G)-\rrad(G)=\cdim_G(w_b)-\cdim_G(u_0)=a+b-1-(a-1)=b$ can be arbitrarily large as $b \rightarrow \infty$.
\end{remark}

It is a result of Harary and Norman~\cite{HN} that all central vertices of any connected graph $G$ are contained in a subgraph of $G$ with no cut-vertices. Figure~\ref{resolvingcen} shows that a similar result does not hold for resolving centers.

\begin{remark}
\begin{itemize}
\item[(a)] There exists a graph $G$ such that the subgraph induced by $RC(G)$ is not connected. Let $G$ be the graph in Figure \ref{resolvingcen}, and we label each vertex $v\in V(G)$ by $\cdim_G(v)$; then $G[RC(G)]=2K_{1,3}$.
\item[(b)] There exists a graph $G$ such that any connected subgraph of $G$ containing $RC(G)$ contains cut-vertices (see Figure \ref{resolvingcen}).
\item[(c)] Note that $RC(G)$ is the union of all minimum connected resolving sets of $G$.
\item[(d)] Let $H_1, H_2, \ldots, H_k$ be the $k$ connected components of $G[RC(G)]$, where $k \ge 1$. Then, for each $i\in\{1,2,\ldots, k\}$, $V(H_i)$ forms a (not necessarily minimum) resolving set of $G$.
\end{itemize}
\end{remark}

\begin{figure}[ht]
\centering
\begin{tikzpicture}[scale=.5, transform shape]

\node [draw, shape=circle, scale=1.1] (a) at  (0,0) {};
\node [draw, shape=circle, scale=1.1] (b) at  (1.5,0) {};
\node [draw, shape=circle, scale=1.1] (c) at  (9,0) {};
\node [draw, shape=circle, scale=1.1] (d) at  (10.5,0) {};

\node [draw, shape=circle, scale=1.1] (1) at  (3,1) {};
\node [draw, shape=circle, scale=1.1] (2) at  (4.5,1) {};
\node [draw, shape=circle, scale=1.1] (3) at  (6,1) {};
\node [draw, shape=circle, scale=1.1] (4) at  (7.5,1) {};
\node [draw, shape=circle, scale=1.1] (11) at  (3,-1) {};
\node [draw, shape=circle, scale=1.1] (22) at  (4.5,-1) {};
\node [draw, shape=circle, scale=1.1] (33) at  (6,-1) {};
\node [draw, shape=circle, scale=1.1] (44) at  (7.5,-1) {};

\node [scale=1.9] at (-2.5,0) {\large $G$:};

\node [scale=1.9] at (0,0.6) {$3$};
\node [scale=1.9] at (1.5,0.6) {$3$};
\node [scale=1.9] at (9,0.6) {$3$};
\node [scale=1.9] at (10.5,0.6) {$3$};

\node [scale=1.9] at (3,1.6) {$3$};
\node [scale=1.9] at (4.5,1.6) {$4$};
\node [scale=1.9] at (6,1.6) {$4$};
\node [scale=1.9] at (7.5,1.6) {$3$};
\node [scale=1.9] at (3,-1.6) {$3$};
\node [scale=1.9] at (4.5,-1.6) {$4$};
\node [scale=1.9] at (6,-1.6) {$4$};
\node [scale=1.9] at (7.5,-1.6) {$3$};

\draw(a)--(b)--(1)--(2)--(3)--(4)--(c)--(d);\draw(b)--(11)--(22)--(33)--(44)--(c);

\end{tikzpicture}
\caption{A connected graph $G$ such that $G[RC(G)]$ is not connected and that any connected subgraph of $G$ containing $RC(G)$ contains cut-vertices; here, each vertex $v$ is labeled by $\cdim_G(v)$.}\label{resolvingcen}
\end{figure}
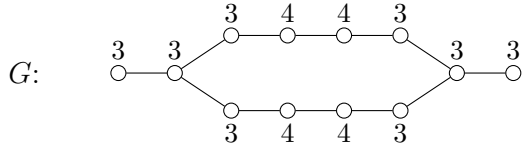

Next, we characterize graphs (vertices in graphs, respectively) with $\cdim(G)=1$ or $\cdim(G)=|V(G)|-1$ ($\cdim_G(v)=1$ or $\cdim_G(v)=|V(G)|-1$, respectively). For $2 \le r \le n-2$, let $P_r$ be the path given by $u_1, u_2, \ldots, u_r$. Let $\mathcal{H}_1$ be the family of graphs (forks) obtained from a disjoint union of $P_r$ and $\overline{K}_{n-r}$ by joining $u_r$ to every vertex of $\overline{K}_{n-r}$. Let $\mathcal{H}_2$ be the family of graphs (paddles) obtained from a disjoint union of $P_r$ and $K_{n-r}$ by joining $u_r$ to every vertex of $K_{n-r}$; note that $\mathcal{H}_2$ corresponds to the family of graphs $G_{n-r+1,r-1}$ described in the proof of Proposition~\ref{Gab}.

\begin{theorem}\label{cdim_characterization}
Let $G$ be a connected graph of order $n \ge 2$, and let $v\in V(G)$. Then
\begin{itemize}
\item[(a)] $\cdim_G(v)=1$ if and only if $G=P_n$ and $v$ is an end vertex of $G$,
\item[(b)] $\cdim_G(v)=n-1$ if and only if (i) $G \in \{K_2, K_3\} \cup \{K_n, K_{1,n-1}\}$, for $n \ge 4$, and $v$ is any vertex of $G$, or (ii) $G=K_{1,2}=P_3$ and $v$ is the degree-two vertex of $G$, or (iii) $G\in \mathcal{H}_1 \cup \mathcal{H}_2$ and $v=u_1$.
\end{itemize}
\end{theorem}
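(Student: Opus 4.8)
The plan is to prove the two characterizations (a) and (b) separately, with (b) requiring the bulk of the work and being split into the ``only if'' and ``if'' directions. Throughout I will exploit the two foundational facts already available: twin vertices force any resolving set to contain all but one of them, and a connected resolving set must induce a connected subgraph, so removing a vertex from $S$ can only be compensated by adjacency structure.

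For part (a), the ``if'' direction is immediate: if $G=P_n$ and $v$ is an end vertex, then the single vertex $v$ resolves every pair (since distances along a path from an endpoint are all distinct), so $\cdim_G(v)=1$. For the ``only if'' direction, $\cdim_G(v)=1$ means $\{v\}$ is a resolving set, whence $\dim(G)=1$; by part (a) of Theorem~\ref{dim_characterization} this forces $G=P_n$. It remains to see $v$ must be an endpoint: if $v$ were an interior vertex of the path, its two neighbors would lie at equal distance $1$ from $v$, so $\{v\}$ could not resolve them. I expect part (a) to be routine.

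For part (b), the ``if'' direction is a case check: for each listed family I must verify $\cdim_G(v)=n-1$. The cleanest route is to show simultaneously that $V(G)-\{u\}$ is a connected resolving set containing $v$ for some appropriate non-cut-vertex $u$ (giving $\cdim_G(v)\le n-1$, which is Observation~\ref{obs}(b) anyway), and that no connected resolving set of size $n-2$ containing $v$ exists. For $K_n$ the entire vertex set consists of mutual twins, so any resolving set omits at most one vertex, forcing size $\ge n-1$; for $K_{1,n-1}$ the $n-1$ leaves are mutual twins, so a resolving set contains at least $n-2$ of them, and any such set containing the center plus $n-3$ leaves fails to resolve the two omitted leaves unless we take $n-2$ leaves, but then connectivity (through the center) forces the center in, giving size $n-1$; the small cases $K_2,K_3,P_3$ and the degree-two vertex of $P_3$ are finite checks. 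For $\mathcal{H}_1$ and $\mathcal{H}_2$ with $v=u_1$, connectivity is the key constraint: any connected set containing the endpoint $u_1$ must contain the entire initial path segment $u_1,\ldots,u_r$, while resolving the twin set $\overline{K}_{n-r}$ (resp. $K_{n-r}$) forces all but one of those $n-r$ vertices into $S$; summing gives $|S|\ge r+(n-r-1)=n-1$.

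The ``only if'' direction is the main obstacle and the heart of the theorem. Here I assume $\cdim_G(v)=n-1$ and must deduce $G$ lies in one of the listed families with $v$ in the prescribed position. The natural strategy is contrapositive/constructive: show that if $G$ is not one of these graphs (or $v$ is not in the stated position), then there is a connected resolving set of size at most $n-2$ containing $v$, i.e. one can delete two suitable vertices and still resolve. Equivalently, I would analyze the structure forced by the extremal condition. Since $\cdim_G(v)=n-1$ means we cannot drop two vertices while keeping a connected resolving set through $v$, the graph must be ``rigid'' in a strong sense. I expect to argue via the twin and connectivity obstructions being the \emph{only} ways to force size $n-1$: if $G$ has few twins and is not highly symmetric, Lemma~\ref{cutvertex} provides two non-cut-vertices to remove, and careful bookkeeping shows distances still distinguish all pairs. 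The delicate part will be handling the interaction between the connectivity requirement (which can force unwanted vertices back into $S$ when $v$ is an endpoint of an attached path) and the resolving requirement, and in organizing the case analysis so that exactly the families $K_n$, $K_{1,n-1}$, $\mathcal{H}_1$, $\mathcal{H}_2$, and the sporadic small graphs survive. I would lean on Theorem~\ref{dim_characterization} to control the sub-case $\dim(G)=n-1$ (forcing $G=K_n$) and on Theorem~\ref{dim_tree} to handle the tree sub-cases, reserving direct distance computations for the remaining structured families.
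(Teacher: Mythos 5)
Your part (a) and the ``if'' direction of part (b) are correct and essentially coincide with the paper's argument: twin-vertex counting in $K_n$ and $K_{1,n-1}$ (with the disconnectedness of a set of $n-2$ leaves forcing the center in), and, for the families $\mathcal{H}_1,\mathcal{H}_2$, the observation that connectivity of $G[S]$ together with $u_1\in S$ forces the entire segment $u_1,\ldots,u_r$ into $S$ while the twin class at the other end contributes $n-r-1$ vertices, giving $|S|\ge n-1$. No issues there.

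The converse direction of (b) is where your proposal stops being a proof: you correctly flag it as the heart of the theorem, but you offer only a statement of intent (``if $G$ is not in the list, delete two suitable vertices''), not an argument, and the two concrete mechanisms you suggest would fail. First, invoking Lemma~\ref{cutvertex} to find two non-cut-vertices to delete does not work: even when $u$ and $w$ are individually non-cut-vertices, $G[V(G)-\{u,w\}]$ may be disconnected (delete two antipodal vertices of $C_4$), and there is no reason the remaining vertices resolve $G$; the deleted pair must be chosen using the distance structure from $v$, which your sketch never engages. Second, leaning on Theorem~\ref{dim_characterization} (``$\dim(G)=n-1$ forces $K_n$'') and Theorem~\ref{dim_tree} misreads the hypothesis: $\cdim_G(v)=n-1$ gives no lower bound on $\dim(G)$ --- the paddles in $\mathcal{H}_2$ have $\dim(G)=n-r$, which can be as small as $2$ --- so those results do not carve up the case analysis. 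What the paper actually does is stratify by the eccentricity of $v$: when $e(v)=1$ and $G$ is not complete or a star, it finds $x,y,z$ with $xy\in E(G)$ and $xz\notin E(G)$ and shows $V(G)-\{y,z\}$ is a connected resolving set at $v$; when $e(v)=k\ge 2$, it proves --- by taking a maximal distance level $q$ containing two vertices and constructing, in each of the subcases $q\le k-2$ and $q=k-1$, a bespoke connected resolving set of size $n-2$ --- that every level $1,\ldots,k-1$ from $v$ contains exactly one vertex, and then shows the farthest level must induce $\overline{K}_{n-k}$ or $K_{n-k}$, which is precisely membership in $\mathcal{H}_1\cup\mathcal{H}_2$ with $v=u_1$. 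None of this structural analysis, which is the actual content of the theorem, appears in your proposal, so the converse of (b) is a genuine gap.
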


\begin{proof}
Let $G$ be a connected graph of order $n \ge 2$, and let $v\in V(G)$.\\

(a) ($\Leftarrow$) If $v$ is an end vertex of $P_n$, then $\{v\}$ is a resolving set of $P_n$; thus, $\cdim_G(v)=1$.

($\Rightarrow$) Let $\cdim_G(v)=1$. Then $G=P_n$ by Theorem~\ref{dim_characterization}(a) and Observation~\ref{obs}(a)(b). If $v$ is not an end vertex of $G=P_n$, then $v$ fails to resolve two vertices in $N(v)$; thus $\cdim_G(v) \ge 2$. So, $v$ must be an end vertex of $P_n$.\\

(b) ($\Leftarrow$) \textbf{Case 1: $G=K_n$ for $n \ge 2$.} In this case, $\cdim(G)=n-1$ by Theorem~\ref{dim_characterization}(b) and Observation~\ref{obs}(a). By Observation~\ref{obs}(b), $\cdim_G(v)=n-1$ for any $v\in V(G)$.

\textbf{Case 2: $G=K_{1,n-1}$ for $n \ge 3$.} If $n=3$ (i.e., $G=K_{1,2}=P_3$) and $v$ is the degree-two vertex of $G$, the claim is obvious. So, let $n \ge 4$. Let $w$ be the central vertex of $G$ and let $L=\{\ell_1, \ell_2, \ldots, \ell_{n-1}\}$ be the set of end vertices of $G$. Let $S$ be any minimum resolving set of $G$. Then $|S \cap L| =n-2=\dim(G)$ by Theorem~\ref{dim_tree} and the fact that any two distinct vertices in $L$ are twin vertices. So, $\cdim(G) = n-1$: (i) $\cdim(G) \ge |S|+1=n-1$ since $G[S]$ is disconnected; (ii) $\cdim(G) \le n-1$ by Observation~\ref{obs}(a). Thus, $\cdim_G(v)=n-1$, for any $v\in V(G)$, by Observation~\ref{obs}(b).

\textbf{Case 3: $G\in \mathcal{H}_1 \cup \mathcal{H}_2$ and $v=u_1$.} Let $S$ be any connected resolving set of $G$ with $u_1\in S$. If $G \in \mathcal{H}_1$, then $|S \cap V(\overline{K}_{n-r})| \ge n-r-1 \ge 1$ since any two distinct vertices in $\overline{K}_{n-r}$ are twin vertices of $G$ and $2 \le r \le n-2$; similarly, if $G \in \mathcal{H}_2$, then $|S \cap V(K_{n-r})| \ge n-r-1 \ge 1$ since any two distinct vertices in $K_{n-r}$ are twin vertices of $G$. Since $u_1 \in S$ and $G[S]$ is connected, $\cup_{i=1}^{r}\{u_i\} \subset S$; thus $|S| \ge r+(n-r-1)=n-1$, and hence $\cdim_G(u_1) \ge n-1$. By Observation~\ref{obs}(b), $\cdim_G(u_1)=n-1$.

($\Rightarrow$) Let $\cdim_G(v)=n-1$.

\textbf{Case 1: $e(v)=1$.} Note that $\deg_G(v)=n-1$. Suppose that $G \not\in \{K_2, K_3\} \cup \{K_n, K_{1,n-1}\}$; then $n\geq 3$, as $G$ is connected. If $n=3$, then $G=P_3$ and claim (ii) of (b) follows. So let $n \geq 4$; then, excluding $v$, there exist three distinct vertices $x,y,z$ such that $xy \in E(G)$ and $xz\not\in E(G)$. Since $N[v]=V(G)$ and $d(x,y)=1<d(x,z)$, it follows that $V(G)-\{y,z\}$ is a connected resolving set of $G$ at $v$, contradicting the assumption that $\cdim_G(v)=n-1$. (Thus, if $e(v)=1$, then (i) or (ii) of (b) holds.)

\textbf{Case 2: $e(v) =k \ge 2$.} Let $t \in V(G)$ be at distance $k$ from $v$. We claim that, for every positive integer $p \in [1, k-1]$, there is exactly one vertex $u\in V(G)$ with $d(v,u)=p$. Assume, to the contrary, that there exists $p\in[1,k-1]$ such that there are at least two distinct vertices at distance $p$ from $v$ in $G$, and take $q$ to be the maximum of all such $p$'s. Let $A=\{x_1,x_2, \ldots, x_{\alpha}\}$ be all vertices in $G$ at distance $q$ from $v$.

First, assume $q\leq k-2$; note that $k \ge 3$ in this case. Then there is exactly one vertex $y \in V(G)$, lying on a shortest $v-t$ path, such that $d(v,y)=q+1$. Without loss of generality, let $x_1y\in E(G)$. Then $V(G)-\{x_2,t\}$ forms a connected resolving set of $G$: (i) since $d(v,t)=e(v)$, no shortest $v-s$ path contains $t$ for $s \in V(G)-\{t\}$; (ii) for any vertex $s\in V(G)-\{x_2\}$ with $1\le d(v,s) \le q$, no shortest $v-s$ path contains $x_2$; (iii) since $y$ is the only vertex at distance $q+1$ from $v$ in $G$ and $x_1y\in E(G)$, for every $s \in V(G)$ with $q<d(v,s) \le k$, there is a shortest $v-s$ path that contains both $x_1$ and $y$, but not $x_2$; (iv) $v$ resolves $x_2$ from $t$ in $G$. Thus, $\cdim_G(v) \le n-2$, contradicting the assumption.

Now, let $q=k-1$. If $t$ is the only vertex at distance $q+1=k$ from $v$ such that $x_1$ lies on a shortest $v-t$ path, one can show, as before, that $V(G)-\{x_2, t\}$ is a connected resolving set of $G$, implying $\cdim_G(v) \le n-2$. So, assume $\beta\geq 2$ and let $B=\{z_1,z_2, \ldots, z_{\beta}\}$ be all vertices in $G$ at distance $q+1=k$ from $v$. Let $B'$ be a two-element subset of $B$. Since $V(G)-B'$ induces a connected subgraph of $G$, $\cdim_G(v)\leq n-2$ if there is an $x_i\in A$ which resolves the two vertices of $B'$. If no vertex in $A$ can resolve any pair of vertices in $B$, then $B\subseteq N(x_i)$ for each $x_i\in A$; in this case $V(G)-\{x_1,z_1\}$ yields a connected resolving set at $v$.

Thus, for every positive integer $p \in [1, k-1]$, there is exactly one vertex at distance $p$ from $v$, which implies that $v$ is an end vertex of $P_k \subset G$. Let the $P_k$ be given by $v=u_1, u_2, \ldots, u_k$, and let $C=\{w_1, w_2, \ldots, w_{n-k}\}$ be the set of vertices at distance $k$ from $v$; note that $n-k \ge 2$, otherwise, $G$ is a path and $\cdim_G(v)=1\le n-2$. If $G[C] \in \{\overline{K}_{n-k}, K_{n-k}\}$, then $G$ and $v$ satisfy (iii) of (b). If $G[C] \not\in \{\overline{K}_{n-k}, K_{n-k}\}$, then there are three distinct vertices $x$, $y$, and $z$ at distance $k$ from $v$ such that $xy \in  E(G)$ and $xz\not\in E(G)$. This again makes $V(G)-\{y,z\}$ a connected resolving set of $G$, contradicting $\cdim_G(v)=n-1$.~\hfill
\end{proof}

As an immediate consequence of Observation~\ref{obs}(b) and Theorem~\ref{cdim_characterization}, we have the following

\begin{corollary}
Let $G$ be a connected graph of order $n \ge 2$. Then $\cdim(G)=n-1$ if and only if $G \in \{K_2, K_3\} \cup \{K_n, K_{1,n-1}\}$, where $n \ge 4$.
\end{corollary}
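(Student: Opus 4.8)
The plan is to recast the statement $\cdim(G)=n-1$ in terms of the vertex quantities and then read the answer directly off Theorem~\ref{cdim_characterization}(b). Since $\cdim(G)=\min_{v\in V(G)}\{\cdim_G(v)\}$ by definition and $\cdim_G(v)\le n-1$ for every $v$ by Observation~\ref{obs}(b), the equality $\cdim(G)=n-1$ holds \emph{if and only if} $\cdim_G(v)=n-1$ for every vertex $v\in V(G)$. In this language the backward implication is immediate: for $G\in\{K_2,K_3\}\cup\{K_n,K_{1,n-1}\}$ (with $n\ge 4$), part~(i) of Theorem~\ref{cdim_characterization}(b) asserts $\cdim_G(v)=n-1$ for \emph{any} $v$, whence $\cdim(G)=n-1$.

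For the forward implication I would assume $\cdim_G(v)=n-1$ for all $v$ and fix a vertex $v_0$; by Theorem~\ref{cdim_characterization}(b) the pair $(G,v_0)$ satisfies (i), (ii), or (iii). If (i) holds we are done. To dispose of (ii) and (iii) I exploit the ``only if'' direction of the same theorem: it suffices to locate a single vertex $w$ for which $(G,w)$ is none of the listed configurations, since then $\cdim_G(w)\le n-2$, contradicting the standing hypothesis. If $(G,v_0)$ falls under (ii), then $G=P_3$; an end vertex $w$ is not the degree-two center, $P_3=K_{1,2}$ is excluded from the list of~(i) (which requires $n\ge 4$), and $P_3\notin\mathcal{H}_1\cup\mathcal{H}_2$ (the range $2\le r\le n-2$ is empty when $n=3$), so $(G,w)$ fails (i)--(iii) and indeed $\cdim_G(w)=1$ by Theorem~\ref{cdim_characterization}(a). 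If $(G,v_0)$ falls under (iii), then $G\in\mathcal{H}_1\cup\mathcal{H}_2$, and I take $w=u_r$, the ``hub'' where the path meets $\overline{K}_{n-r}$ or $K_{n-r}$. Because $u_r$ has degree at least three it cannot be the degree-one vertex $u_1$ of any $\mathcal{H}_1/\mathcal{H}_2$ representation, and these graphs are neither $P_3$ nor members of the list in~(i) (a paddle contains a triangle together with a cut-vertex, and a nondegenerate fork is a non-star tree); hence $(G,u_r)$ fails (i)--(iii), giving the required contradiction.

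The delicate point, and the one I expect to be the crux, is the hidden overlap between (iii) and (i): taking $r=2$ in the definition of $\mathcal{H}_1$ (a single edge $u_1u_2$ with $u_2$ joined to $\overline{K}_{n-2}$) produces exactly the star $K_{1,n-1}$, which already lies in the list of~(i). For this graph the ``hub'' $u_2$ is the star center and does satisfy $\cdim_G(u_2)=n-1$, so the contradiction argument must not be applied blindly. I would therefore split the case $G\in\mathcal{H}_1$ according to $r$: for $r=2$ one simply records $G=K_{1,n-1}$ and the conclusion holds directly, while for $r\ge 3$ the hub $u_r$ has a degree-two neighbour $u_{r-1}$, so $G$ is a genuine non-star tree and the argument above applies. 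Once this degeneracy is isolated, every branch either exhibits $G$ in the desired family or contradicts $\cdim_G(v)=n-1$ for all $v$, which completes the characterization.
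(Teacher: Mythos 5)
Your proof is correct and follows essentially the same route as the paper, which derives the corollary directly from Observation~3.2(b) (so that $\cdim(G)=n-1$ forces $\cdim_G(v)=n-1$ for \emph{every} $v$) combined with Theorem~3.9(b). Your explicit handling of the degenerate overlap --- the fork in $\mathcal{H}_1$ with $r=2$ being exactly the star $K_{1,n-1}$, already covered by case~(i) --- is a detail the paper leaves implicit in calling the corollary immediate, and your degree argument ruling out $u_r$ as the $u_1$ of any $\mathcal{H}_1/\mathcal{H}_2$ representation correctly closes the remaining cases.
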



\section{The planarity of $G$ is implied by $\cdim(G)=2$}

A graph is \emph{planar} if it can be drawn in a  plane without edge crossing. It is well known that $\dim(G)=2$ does not imply the planarity of $G$ (see~\cite{tree2}). In this section, we show that $\cdim(G)=2$ implies the planarity of $G$ by exhibiting, for any integer $n$ greater than $1$, all graphs of order $n$ having a fixed pair of adjacent vertices as a resolving set. For each positive integer $k \ge 3$, we show the existence of a non-planar graph $G$ with $\cdim(G)=k$. Given that $\cdim(G)\leq 2$ is the same as being able to resolve $G$ by two adjacent vertices, we show, for contrast, a non-planar graph resolved by two vertices at distance two from each other. \\

We first recall a result on the characterization of planar graphs due to Wagner~\cite{wagner}. For two graphs $G$ and $H$, $H$ is called a \emph{minor} of $G$ if $H$ can be obtained from $G$ by vertex deletion, edge deletion, or edge contraction. 

\begin{theorem}\label{planarity}\emph{\cite{wagner}}
A graph $G$ is planar if and only if neither $K_5$ nor $K_{3,3}$ is a minor of $G$.
\end{theorem}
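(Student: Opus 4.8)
The final statement is Wagner's theorem, a cornerstone of topological graph theory, and I would organize its proof around its two implications, treating the forward direction as routine and concentrating on the converse. The forward direction rests on the fact that planarity is minor-monotone: deleting a vertex or an edge obviously preserves a plane drawing, and contracting an edge $e=uv$ can be realized geometrically by sliding $u$ along the arc representing $e$ onto $v$ without creating crossings, so every minor of a planar graph is planar. Since $K_5$ and $K_{3,3}$ are themselves non-planar---Euler's formula $n-m+f=2$ for a connected plane graph, together with the edge bounds $m \le 3n-6$ in general and $m \le 2n-4$ when there are no triangles, excludes $m=10$ for $K_5$ and $m=9$ for $K_{3,3}$---a planar $G$ can contain neither as a minor.

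For the converse I would assume $G$ has no $K_5$ and no $K_{3,3}$ minor and prove $G$ is planar by induction on $|V(G)|$, first reducing the connectivity. If $G$ has a cut-vertex, then $G$ is planar if and only if each of its blocks is, and each block inherits the minor-freeness hypothesis, so I may assume $G$ is $2$-connected. If $G$ is $2$-connected but has a $2$-separator $\{x,y\}$, I split $G$ into the bridges across $\{x,y\}$ and add the edge $xy$ to each piece; this addition cannot create a forbidden minor, since the remaining side of $G$ supplies an internally disjoint $x$--$y$ path that contracts to exactly that edge, so $G_i + xy$ is a minor of $G$. Each augmented piece is planar by induction, embedded with $xy$ on its outer boundary, and the plane drawings glue along $xy$. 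Hence it suffices to prove: every $3$-connected graph with no $K_5$ and no $K_{3,3}$ minor is planar.

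In the $3$-connected case I would invoke Tutte's contraction lemma, that every $3$-connected graph on at least five vertices has an edge $e$ whose contraction $G/e$ is again $3$-connected. Such a $G/e$ inherits the minor hypothesis and has fewer vertices, so by induction it is planar, and being $3$-connected its embedding is unique up to reflection and choice of outer face by Whitney's theorem. Writing $w$ for the vertex of $G/e$ obtained by identifying the endpoints $x,y$ of $e$, the faces incident to $w$ are bounded by a cycle $C$ passing through the neighbours of $w$. The crux is to re-expand $w$ into the edge $xy$: one must show the neighbours of $x$ and those of $y$ occupy two disjoint arcs of $C$, so that splitting $w$ introduces no crossing. \emph{This case analysis is the main obstacle.} If the two neighbour sets interleave too much around $C$, one extracts either three pairwise ``crossing'' connections, yielding a $K_{3,3}$ minor, or a configuration that collapses to $K_5$; conversely, the absence of both forbidden minors forces a cyclic arrangement admitting a planar split. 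Verifying that every obstruction to re-expansion produces one of the two forbidden minors is the heart of the argument.

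As an alternative I would note that Wagner's theorem can instead be deduced from Kuratowski's theorem (no subdivision of $K_5$ or $K_{3,3}$): because $K_{3,3}$ has maximum degree $3$, it is a minor of $G$ exactly when $G$ contains a subdivision of it, and a short lemma shows that a $K_5$ minor forces either a $K_5$ or a $K_{3,3}$ subdivision, so the minor and topological-minor hypotheses coincide. I regard the self-contained inductive proof above as preferable, since it does not route through the separate topological machinery underlying Kuratowski's theorem.
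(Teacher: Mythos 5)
The paper does not actually prove this statement; it is quoted as Wagner's classical theorem with a citation to Wagner (1937), so there is no in-paper argument to measure you against. Your outline is the standard modern route: minor-monotonicity of planarity plus the Euler-formula counts ($m\le 3n-6$ and $m\le 2n-4$ for triangle-free graphs) for the easy direction, then block and $2$-separator reductions to the $3$-connected case, Tutte's contraction lemma, and re-expansion of the contracted vertex inside the (Whitney-unique) embedding of $G/e$. The architecture is correct, and your justification that $G_i+xy$ is a minor of $G$ (contract an internally disjoint $x$--$y$ path through the other bridge) is the right one.

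However, there is a genuine gap, and you name it yourself without closing it: the claim that every obstruction to splitting the contracted vertex $w$ back into $x$ and $y$ produces a $K_5$ or $K_{3,3}$ minor is asserted (``one extracts either three pairwise crossing connections, yielding a $K_{3,3}$ minor, or a configuration that collapses to $K_5$'') but never carried out, and this case analysis is where essentially all of the content of Wagner's theorem lives. To close it one must fix the cycle $C$ bounding the face of $G/e - w$ that contained $w$, list the neighbours $x_1,\dots,x_k$ of $x$ in cyclic order on $C$, and verify two cases: if $y$ has neighbours in two distinct open arcs $(x_i,x_{i+1})$ and $(x_j,x_{j+1})$ of $C$, then contracting the four arcs between these attachment points exhibits a $K_{3,3}$ minor with branch sets built from $x$, $y$, and the arc segments; and if $x$ and $y$ have three common neighbours on $C$, these three vertices together with $x$ and $y$ yield a $K_5$ minor. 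One must also dispose of the degenerate case $\deg(y)=3$ (neighbours $x$ and two vertices of $C$) where $y$ can be placed inside a single region. Without these verifications the induction step is unproven, so what you have is a correct proof plan rather than a proof; the same remark applies to your alternative route, which defers the analogous work to Kuratowski's theorem. Two smaller omissions are routine but should be stated: the base cases $|V(G)|\le 4$, and the fact that each $2$-connected piece can be embedded with $xy$ on its outer face (choose a face incident with $xy$ and rechoose the outer face via the sphere).
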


For each nonnegative integer $r$, we define a family $\mathcal{F}_r$ of obviously planar graphs on a fixed, connected resolving set $\{u,w\}$ as follows. First, let $\mathcal{F}_0=\{P_2\}$, where $V(P_2)=\{u,w\}$. For $r\geq 1$, each graph $G$ of the family $\mathcal{F}_r$ must satisfy the following rules:
\begin{enumerate}
\item[(R1)] $\{u,w\}\subseteq V(G)\subseteq \{u,w\}\cup\{x_a,y_a,z_a:1\leq a\leq r\}$
\item[(R2)] if $x_1 \in V(G)$, then $x_1u \in E(G)$
\item[(R3)] if $z_1 \in V(G)$, then $z_1w \in E(G)$
\item[(R4)] if $y_1 \in V(G)$, then $y_1u, y_1w \in E(G)$
\item[(R5)] if $x_a \in V(G)$ for $2 \leq a \leq r$, then $x_{a-1} \in V(G)$ and $x_ax_{a-1} \in E(G)$
\item[(R6)] if $z_a \in V(G)$ for $2 \leq a \leq r$, then $z_{a-1} \in V(G)$ and $z_az_{a-1} \in E(G)$
\item[(R7)] if $y_a \in V(G)$, $2 \leq a \leq r$, then at least one of the following must be true:
\begin{enumerate}
\item[(i)] $x_{a-1}, z_{a-1} \in V(G)$ and $x_{a-1}y_a, z_{a-1}y_a \in E(G)$, or 
\item[(ii)] $y_{a-1} \in V(G)$ and $y_{a-1}y_a \in E(G)$
\end{enumerate}
\item[(R8)] any of the edges $x_ay_a$, $x_az_a$, or $y_az_a$ may or may not be in $E(G)$, for each $a$ with $1 \leq a \leq r$
\item[(R9)] there are no other edges in $G$.
\end{enumerate}

\begin{theorem}\label{thm_cdim2}  
For a connected graph $G$, $\cdim(G) \leq 2$ if and only if $G \in \mathcal{F}_r$ for a nonnegative integer $r$. 
\end{theorem}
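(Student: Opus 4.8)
My plan is to analyze $G$ entirely through the \emph{code map} $v \mapsto (d(v,u), d(v,w))$ relative to the distinguished pair $\{u,w\}$, noting that $uw \in E(G)$ because $\{u,w\}$ is required to be connected. Two elementary facts will carry the whole proof. First, since $uw \in E(G)$, the triangle inequality gives $|d(v,u) - d(v,w)| \le 1$ for every $v$, so each code has exactly one of the three shapes $(a,a+1)$, $(a,a)$, $(a+1,a)$. Second, along any edge of $G$ each coordinate of the code changes by at most $1$; equivalently, setting the \emph{level} of $v$ to be $\min\{d(v,u),d(v,w)\}$, adjacent vertices have levels differing by at most $1$. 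I would record these two facts first, as everything else is bookkeeping on top of them.

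For the forward implication I first reduce to a resolving pair of adjacent vertices. By Observation~\ref{obs}(a), $\cdim(G) \le 2$ forces either $\cdim(G)=2$, where the connected resolving set is such a pair $\{u,w\}$ with $uw \in E(G)$, or $\cdim(G)=1$, where $G = P_n$ and any edge serves as a resolving pair (with $P_2 = \mathcal{F}_0$). Given such $\{u,w\}$, distinctness of the codes lets me \emph{name} the at most one vertex realizing each code: for $a \ge 1$, the vertex of code $(a,a+1)$ is $x_a$, of code $(a+1,a)$ is $z_a$, of code $(a,a)$ is $y_a$, while $u,w$ carry codes $(0,1),(1,0)$. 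Putting $r = \max_v \min\{d(v,u),d(v,w)\}$ yields (R1). To produce (R2)--(R7) I walk one step along a shortest path from a level-$a$ vertex toward $u$ (and, separately, toward $w$): the predecessor has $d(\cdot,u)=a-1$, and the two elementary facts pin its other coordinate exactly, so it must be $x_{a-1}$ or $z_{a-1}$; for a $y_a$ the two predecessors are forced to be either some $y_{a-1}$ (yielding (R7)(ii)) or the pair $x_{a-1},z_{a-1}$ (yielding (R7)(i)). This simultaneously supplies the demanded vertices and edges, while (R8) is permissive and requires no check.

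The crux, and the step I expect to be the main obstacle, is verifying (R9): that $G$ has no edges beyond those allowed. For an arbitrary edge $pq$, the level fact bounds the difference of levels by $1$, and the coordinatewise fact then restricts $\{p,q\}$ to finitely many code-pairs; a short case analysis rules out every forbidden configuration (for instance an edge $x_{a-1}z_a$ cannot occur, as it would force the first coordinate to jump from $a-1$ to $a+1$), leaving precisely the adjacencies sanctioned by (R2)--(R8). Matching each surviving edge to its rule gives $G \in \mathcal{F}_r$.

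For the converse, let $G \in \mathcal{F}_r$. Reading the chain rules (R2)/(R5), (R3)/(R6), (R4)/(R7) top down shows every vertex is joined back to $\{u,w\}$, so $G$ is connected and $\{u,w\}$ induces the edge $uw$; it remains to check that $\{u,w\}$ resolves $G$. Assigning each vertex the combinatorial level given by its subscript (and level $0$ to $u,w$), one checks directly that every edge permitted by (R2)--(R8) joins vertices whose levels differ by at most $1$, whence $d(v,\{u,w\}) \ge \mathrm{level}(v)$ for all $v$. I would then prove by induction on the level that $x_a, y_a, z_a$ (whenever present) have codes $(a,a+1),(a,a),(a+1,a)$: the upper bounds come from descending the chains, and the matching lower bounds from the level inequality, which guarantees that neither the optional edges of (R8) nor either alternative of (R7) can create a shortcut. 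Since these codes are pairwise distinct, $\{u,w\}$ is a connected resolving set and $\cdim(G) \le 2$.
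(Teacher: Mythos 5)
Your proposal is correct and follows essentially the same route as the paper: your forward direction (naming the at most one vertex per code with respect to an adjacent resolving pair, setting $r=\max_v\min\{d(v,u),d(v,w)\}$, and reading off (R1)--(R9) from the two Lipschitz facts about codes) matches the paper's argument step for step, while your converse simply fleshes out what the paper dismisses as ``easy to check.'' One caution in that converse: the level inequality alone yields only $d(x_a,w)\ge a$, not the needed $a+1$ --- the shortcut to exclude runs through the chain edge $x_ax_{a-1}$ rather than through an (R7)/(R8) edge, and it is ruled out by the inductive hypothesis $d(x_{a-1},w)=a$ (which your induction-on-level framework does supply), not by the level bound, so the plan closes once that attribution is corrected.
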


\begin{proof}  ($\Leftarrow$) Suppose $G \in \mathcal{F}_r$ for a nonnegative integer $r$. Let $S=\{u,w\}$. The case of $r=0$ is trivial; so, let $r\geq 1$. It is easy to check, based on the rules imposed on graphs in $\mathcal{F}_r$, that $\code_S(x_a)=(a,a+1)$, $\code_S(y_a)=(a,a)$, and $\code_S(z_a)=(a+1,a)$, where $1\leq a\leq r$ . Thus, $S$ is a connected resolving set of $G$ and $\cdim(G) \leq 2$.\\

($\Rightarrow$) Suppose $\cdim(G) \leq 2$. Then there is a connected resolving set $S$ for $G$ satisfying $|S|=2$; put $S=\{u,w\}$.  Since $S$ is a connected resolving set, $uw \in E(G)$. Let $r = \max_{v \in V(G)}\{\min\{x,y\}: \code_S(v)=(x,y)\}$.  Notice, since $u$ and $w$ are adjacent, that the two entries in a code for a vertex of $G$ can differ by at most one.  Thus, the set of possible codes for vertices other than $u$ and $w$ are $\{(a,a+1),(a,a),(a+1,a) : 1 \leq a \leq r\}$, and rule (R1) follows. 

Let $x_a$ be the vertex with code $(a,a+1)$, if it exists, let $y_a$ be the vertex with code $(a,a)$, if it exists, and let $z_a$ be the vertex with code $(a+1,a)$, if it exists. Rules (R2), (R3), and (R4) follow, since a vertex can have code $(1,2)$ only if it is adjacent to $u$ and not $w$, code $(1,1)$ only if it is adjacent to both $u$ and $w$, and code $(2,1)$ only if it is adjacent to $w$ and not $u$.  

Regarding (R5), if a vertex $x_a$ has code $(a,a+1)$, then it must be adjacent to a vertex at distance $a-1$ from $u$ but cannot be adjacent to a vertex at distance $a-1$ from $w$, so $x_a$ must be adjacent to a vertex with code $(a-1,a)$, namely $x_{a-1}$, and cannot be adjacent to $y_{a-1}$ or $z_{a-1}$. Similarly, (R6) follows.

Regarding (R7), if a vertex $y_a$ has code $(a,a)$, then it must be adjacent to a vertex at distance $a-1$ from $u$ and to a (possibly the same or different) vertex at distance $a-1$ from $w$.  So $y_a$ could be adjacent to a vertex with code $(a-1,a-1)$, or to two vertices with codes $(a-1,a)$ and $(a,a-1)$, or to both.

If we let $S_a = \{x_a,y_a,z_a\}$, notice that edges within a set $S_a$ do not affect the codes on the vertices, since their distances from $u$ and $w$ differ by at most one.  However, an edge between $S_a$ and $S_{b}$, where $b \geq a + 2$, would change the codes on the vertices. So, (R8) and (R9) follow.~\hfill
\end{proof}
 
\begin{figure}[ht]
\centering
\begin{tikzpicture}[scale=.5, transform shape]

\node [draw, shape=circle, scale=1.3] (a) at  (-2,1) {};
\node [draw, shape=circle, scale=1.3] (b) at  (-2,-1) {};

\node [draw, shape=circle, scale=1.3] (1) at  (0, 2) {};
\node [draw, shape=circle, scale=1.3] (2) at  (0, 0) {};
\node [draw, shape=circle, scale=1.3] (3) at  (0, -2) {};

\node [draw, shape=circle, scale=1.3] (11) at  (2,2) {};
\node [draw, shape=circle, scale=1.3] (22) at  (2,0) {};
\node [draw, shape=circle, scale=1.3] (33) at  (2,-2) {};

\node [draw, shape=circle, scale=1.3] (111) at  (4,2) {};
\node [draw, shape=circle, scale=1.3] (222) at  (4,0) {};
\node [draw, shape=circle, scale=1.3] (333) at  (4,-2) {};

\node [draw, shape=circle, scale=1.3] (1111) at  (6,2) {};
\node [draw, shape=circle, scale=1.3] (2222) at  (6,0) {};
\node [draw, shape=circle, scale=1.3] (3333) at  (6,-2) {};

\node [scale=1.8] at (-2.6,1) {$u$};
\node [scale=1.8] at (-2.6,-1) {$w$};

\draw(a)--(1)--(11)--(111)--(1111)--(7,2);\draw[dotted, thick](2)--(22)--(222)--(2222)--(7,0);\draw(b)--(3)--(33)--(333)--(3333)--(7,-2);
\draw[dotted, thick](7,2)--(8,2);\draw[dotted, thick](7,0)--(8,0);\draw[dotted, thick](7,-2)--(8,-2);
\draw(a)--(b)--(2)--(a);
\draw[dotted, thick](1)--(2)--(3)--(22)--(1);\draw[dotted, thick](11)--(22)--(33)--(222)--(11);\draw[dotted, thick](111)--(222)--(333)--(2222)--(111);\draw[dotted, thick](1111)--(2222)--(3333);
\draw[dotted, thick](1111)--(7.5,0.5);\draw[dotted, thick](3333)--(7.5,-0.5);
\draw[dotted, thick](1) .. controls(1,0) .. (3);\draw[dotted, thick](11) .. controls(3,0) .. (33);\draw[dotted, thick](111) .. controls(5,0) .. (333);\draw[dotted, thick](1111) .. controls(7,0) .. (3333);

\end{tikzpicture}
\caption{A graph $G$ with $\cdim(G)=2$.}\label{cdim_2}
\end{figure}
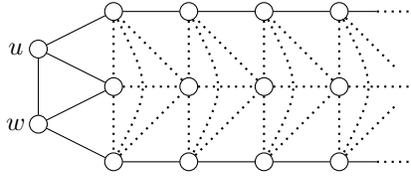

So, Theorem~\ref{thm_cdim2}, together with Observation~\ref{obs}(a), we have the following

\begin{corollary}\label{cor_cdim2}
Let $G$ be a connected graph of order $n \ge 3$. Then $\cdim(G)=2$ if and only if $G$ is not a path and $G \in \mathcal{F}_r$ for some positive integer $r$. See Figure~\ref{cdim_2}, where a solid edge must be present whenever the two vertices incident to the solid edge are in the graph, but a dotted edge is not necessarily present. Thus, if $\cdim(G)=2$, then $G$ is planar.
\end{corollary}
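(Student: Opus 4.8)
The plan is to deduce Corollary~\ref{cor_cdim2} almost entirely from the already-proved Theorem~\ref{thm_cdim2}, using it both for the equivalence and for planarity. First I would establish the equivalence $\cdim(G)=2 \iff (G \text{ is not a path and } G \in \mathcal{F}_r \text{ for some } r \ge 1)$. By Observation~\ref{obs}(a), $\cdim(G)=1$ holds precisely when $G=P_n$, so among connected graphs of order $n \ge 3$ the condition $\cdim(G)=2$ is exactly $\cdim(G)\le 2$ together with $G$ not being a path. Theorem~\ref{thm_cdim2} rewrites $\cdim(G)\le 2$ as $G \in \mathcal{F}_r$ for some nonnegative integer $r$; since $\mathcal{F}_0=\{P_2\}$ consists only of a path and $n \ge 3$ forces $r \ge 1$, the equivalence follows. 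Here I would note that one must also check a graph in some $\mathcal{F}_r$ with $r\ge 1$ can still happen to be a path (e.g.\ using only the $x_a$-vertices), which is why the explicit exclusion ``$G$ is not a path'' cannot be dropped.

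The substantive half is the planarity conclusion: I would prove that every $G \in \mathcal{F}_r$ is planar, which by the equivalence covers every $G$ with $\cdim(G)=2$. The clean approach is to exhibit an explicit plane drawing dictated by the rules (R1)--(R9), as suggested by Figure~\ref{cdim_2}. I would place the vertices of each triple $S_a=\{x_a,y_a,z_a\}$ on a common vertical line, with the $x$-vertices along a top horizontal row, the $y$-vertices along a middle row, and the $z$-vertices along a bottom row, and $u,w$ at the left. The key structural fact, read off from the rules, is that edges only ever occur either \emph{within} a single $S_a$ (rule (R8)) or \emph{between consecutive} triples $S_{a-1}$ and $S_a$ (rules (R5)--(R7)), plus the initial attachments to $u,w$ (rules (R2)--(R4)); rule (R9) forbids everything else. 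A graph built from three parallel paths with only intra-column and adjacent-column edges is a subgraph of a planar ``ladder-like'' grid, hence planar.

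Alternatively, and perhaps more robustly, I would invoke Wagner's criterion (Theorem~\ref{planarity}) and argue that the restricted adjacency pattern prevents either $K_5$ or $K_{3,3}$ from appearing as a minor. The heart of that argument is that contracting or deleting within the $\mathcal{F}_r$ structure preserves the ``layered'' property that all edges stay within or between consecutive columns, and such layered graphs have no $K_5$ or $K_{3,3}$ minor because any minor model would have to route the required three-or-more independent connections across a linear layering in which each column has width at most three. I expect the main obstacle to be making this ``no bad minor'' claim fully rigorous, since minors can merge distant columns through contraction; for this reason I would favor the explicit plane-drawing argument over the minor-theoretic one, treating the layered drawing as the primary proof and Wagner's theorem merely as the conceptual reason planarity is the right conclusion. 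The routine verification that the drawing has no crossings I would leave to the reader, pointing at Figure~\ref{cdim_2} as the witness.
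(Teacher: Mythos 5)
Your first half is fine and is exactly the paper's route: Theorem~\ref{thm_cdim2} converts $\cdim(G)\le 2$ into membership in some $\mathcal{F}_r$, Observation~\ref{obs}(a) removes the case $\cdim(G)=1$, and $n\ge 3$ rules out $\mathcal{F}_0=\{P_2\}$. Your side remark that a member of $\mathcal{F}_r$ with $r\ge 1$ can still be a path (take only the $x_a$'s, giving the path $w,u,x_1,\ldots,x_r$) is correct and explains why the exclusion must be stated explicitly.

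The planarity half, however, contains a genuine gap. The principle you invoke --- ``a graph built from three parallel paths with only intra-column and adjacent-column edges is a subgraph of a planar ladder-like grid, hence planar'' --- is false: $K_{3,3}$ itself consists of two columns of three vertices with only adjacent-column edges. Planarity of $\mathcal{F}_r$ depends on the \emph{specific} adjacency restriction (inter-column edges run only along the rows or into the middle vertex $y_a$), and even granting that, the three-row drawing you describe does not close the argument: the chord $x_az_a$ permitted by (R8) must bypass $y_a$ on one side, and it then crosses whichever of $y_ay_{a+1}$ (on the right) or $y_{a-1}y_a$, $x_{a-1}y_a$, $z_{a-1}y_a$, or $uy_1,wy_1$ when $a=1$ (on the left) is present. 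In the edge-maximal member of $\mathcal{F}_r$ with $r\ge 3$, both sides of an interior $y_a$ are blocked, so within the strip the crossing is unavoidable; and Figure~\ref{cdim_2}, read with all dotted edges present, literally exhibits such crossings (the curved $x_a$--$z_a$ edges cross the middle row), so it is a schematic of the family, not a crossing-free witness. The verification you ``leave to the reader'' is thus precisely the step that fails as described. The conclusion is nevertheless true and repairable: either route every chord $x_az_a$ as a nested arc through the unbounded face (around $u$ and $w$, outside the strip), or, cleaner, prove the edge-maximal member of $\mathcal{F}_r$ planar by induction on levels, adding the triple $\{x_a,y_a,z_a\}$ into a face bounded by the triangle $x_{a-1},y_{a-1},z_{a-1}$ and choosing the new triangle $x_a,y_a,z_a$ as the face receiving the next level (nested triangles); every member of $\mathcal{F}_r$ is a subgraph of the maximal one, so planarity follows. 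Your Wagner-minor fallback you rightly distrust yourself, and as sketched it would equally ``prove'' $K_{3,3}$ planar. To be fair, the paper offers no more than the label ``obviously planar'' plus the figure, so the burden you undertook is real; but your discharge of it, as written, is not correct.
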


Next, we show that there exists a non-planar graph $G$ such that $\cdim(G)=k \ge 3$. 

\begin{remark}\label{rem_cdim3}
For an integer $k \ge 3$, there exists a non-planar graph $G$ satisfying $\cdim(G)=k$. Let $H$ be a complete graph on $k+2$ vertices with vertex set $V(H)=\cup_{i=1}^{k+2}\{u_i\}$, and let $G$ be the graph obtained from $H$ by subdividing the edge $u_1u_{k+2}$ exactly once (see Figure~\ref{cdim_3} when $k=3$). Since $G$ contains $K_5$ as a minor, $G$ is not planar by Theorem~\ref{planarity}. We will show that $\cdim(G)=k$. Let $S$ be any minimum resolving set of $G$. Then $|S \cap \{u_1, u_{k+2}\}| \ge 1$ since $u_1$ and $u_{k+2}$ are twin vertices in $G$, and $|S \cap (\cup_{i=2}^{k+1}\{u_i\})| \ge k-1$ since any two vertices in $\cup_{i=2}^{k+1}\{u_i\}$ are twin vertices in $G$. So, $\cdim(G) \ge \dim(G) \ge k$ by Observation~\ref{obs}(a). On the other hand, $\cup_{i=1}^{k}\{u_i\}$ forms a connected resolving set of $G$, and thus $\cdim(G) \le k$. 
\end{remark}

\begin{figure}[ht]
\centering
\begin{tikzpicture}[scale=.5, transform shape]

\node [draw, shape=circle, scale=1.3] (1) at  (0,2.7) {};
\node [draw, shape=circle, scale=1.3] (2) at  (-2.3, 1.5) {};
\node [draw, shape=circle, scale=1.3] (6) at  (2.3, 1.5) {};
\node [draw, shape=circle, scale=1.3] (3) at  (-2.3, -1.5) {};
\node [draw, shape=circle, scale=1.3] (5) at  (2.3, -1.5) {};
\node [draw, shape=circle, scale=1.3] (4) at  (0,-1.5) {};

\node [scale=1.8] at (0,3.2) {$u_3$};
\node [scale=1.8] at (-3,1.5) {$u_2$};
\node [scale=1.8] at (-3,-1.5) {$u_1$};
\node [scale=1.8] at (3,-1.5) {$u_5$};
\node [scale=1.8] at (3,1.5) {$u_4$};

\draw(1)--(2)--(3)--(4)--(5)--(6)--(1); \draw(1)--(3)--(6)--(2)--(5)--(1);

\end{tikzpicture}
\caption{A non-planar graph $G$ with $\cdim(G)=3$.}\label{cdim_3}
\end{figure}
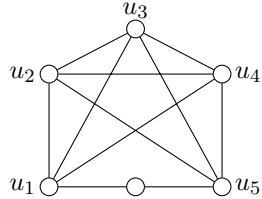

Let $G$ be a connected graph with $\dim(G)=2$, and let $\mathfrak{C}$ be the collection of all minimum resolving sets of $G$. Let $d:=\min_{S \in\mathfrak{C}}\{d(x,y): x,y \in S\}$. Note that $d=1$, equivalently $\cdim(G)=2$, implies $G$ is planar (see Corollary~\ref{cor_cdim2}). Given that $\dim(G)=2$ does not imply planarity of $G$, it is a natural question to consider values of $d$ for which planarity of $G$ is guaranteed. We will show that $d=2$ fails to imply planarity of $G$.

\begin{remark}
Let $H=K_{3,3}$ such that $V(H)$ is partitioned into two partite sets $V_1=\{u_1,u_2,u_3\}$ and $V_2=\{w_1,w_2,w_3\}$. Let $G$ be the graph obtained from $H$ by subdividing the edge $u_2w_2$ exactly twice and subdividing the edges $u_3w_1$ and $u_3w_2$ exactly once (see Figure~\ref{d2}). One can easily check that $\{u_2,u_3\}$ forms a resolving set of $G$, and thus $\dim(G)=2$ by Theorem~\ref{dim_characterization}(a). Since $d_G(u_2,u_3)=2$ and $G$ is not planar, $d=2$ by Corollary~\ref{cor_cdim2}. 
\end{remark}

\begin{figure}[ht]
\centering
\begin{tikzpicture}[scale=.5, transform shape]

\node [draw, shape=circle, scale=1.3] (3) at  (0, 6) {};
\node [draw, shape=circle, scale=1.3] (2) at  (0, 3) {};
\node [draw, shape=circle, scale=1.3] (1) at  (0, 0) {};
\node [draw, shape=circle, scale=1.3] (6) at  (6, 6) {};
\node [draw, shape=circle, scale=1.3] (5) at  (6, 3) {};
\node [draw, shape=circle, scale=1.3] (4) at  (6,0) {};

\node [draw, shape=circle, scale=1.3] (7) at  (1.8, 3) {};
\node [draw, shape=circle, scale=1.3] (8) at  (4.2, 3) {};

\node [draw, shape=circle, scale=1.3] (9) at  (1.3, 4.7) {};
\node [draw, shape=circle, scale=1.3] (10) at  (2, 5) {};

\node [scale=1.8] at (-0.7,0) {$u_1$};
\node [scale=1.8] at (-0.7,3) {$u_2$};
\node [scale=1.8] at (-0.7,6) {$u_3$};
\node [scale=1.8] at (6.7,0) {$w_1$};
\node [scale=1.8] at (6.7,3) {$w_2$};
\node [scale=1.8] at (6.7,6) {$w_3$};

\draw(6)--(1)--(5); \draw(6)--(2)--(7)--(8)--(5);\draw(6)--(3)--(10)--(5); \draw(3)--(9)--(4)--(2);\draw(1)--(4);

\end{tikzpicture}
\caption{A non-planar graph $G$ with $\dim(G)=2$ and $d=2$.}\label{d2}
\end{figure}
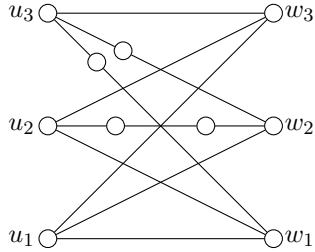


\section{The connected metric dimension of some graphs}

In this section, we determine $\cdim(G)$ and $\cdim_G(v)$, for any $v \in V(G)$, when $G$ is a tree, the Petersen graph, a wheel graph, a bouquet of $m$ cycles for $m \ge 2$, a complete multi-partite graph, or a grid graph. Along the way, we show that $\cdim(G)-\dim(G)$ can be arbitrarily large.\\

First, we consider trees. We recall some terminologies and notations. Fix a tree $T$. An \emph{exterior degree-two vertex} is a vertex of degree two that lies on a path from a terminal vertex to its major vertex, and an \emph{interior degree-two vertex} is a vertex of degree two such that the shortest path to any terminal vertex includes a major vertex. Let $\mathcal{M}(T)$ be the set of exterior major vertices of $T$. Let $\mathcal{M}_1(T)=\{w \in \mathcal{M}(T): ter_T(w)=1\}$ and let $\mathcal{M}_2(T)=\{w \in \mathcal{M}(T): ter_T(w) \ge 2\}$; note that $\mathcal{M}(T)=\mathcal{M}_1(T) \cup \mathcal{M}_2(T)$. For any vertex $v\in \mathcal{M}(T)$, let $T_v$ be the subtree of $T$ induced by $v$ and all vertices belonging to the paths joining $v$ with its terminal vertices. Let $\mathcal{T}_1=\{T_v: v \in \mathcal{M}_1(T)\}$ and $\mathcal{T}_2=\{T_v: v \in \mathcal{M}_2(T)\}$, and let $V(\mathcal{T}_1)= \cup_{T'\in\mathcal{T}_1}V(T')$ and $V(\mathcal{T}_2)=\cup_{T'' \in \mathcal{T}_2}V(T'')$.

\begin{theorem}\emph{\cite{tree3}}\label{zhang_tree}
Let $T$ be a tree with $ex(T)=k \ge 1$, and let $v_1, v_2, \ldots, v_k$ be the exterior major vertices of $T$. For each $i$ ($1 \le i \le k$), let $\ell_{i,1}, \ell_{i,2}, \ldots, \ell_{i, \sigma_i}$ be the terminal vertices of $v_i$ with $ter_T(v_i)=\sigma_i \ge 1$, and let $P_{i,j}$ be the $v_i-\ell_{i,j}$ path, where $1 \le j \le \sigma_i$. Let $W \subseteq V(T)$. Then $W$ is a minimum resolving set of $T$ if and only if $W$ contains exactly one vertex from each of the paths $P_{i,j}-v_i$ ($1 \le j \le \sigma_i$ and $1 \le i\le k$) with exactly one exception for each $i$ with $1 \le i \le k$ and $W$ contains no other vertices of $T$.
\end{theorem}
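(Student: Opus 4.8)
The plan is to reduce the whole statement to one elementary distance fact in trees, and then to split the biconditional into a counting step (necessity of the structure) and a forbidden‑configuration step (sufficiency).

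First I would record the projection lemma. For distinct $x,y$ write the $x$–$y$ path as $x=p_0,p_1,\dots,p_d=y$; for any vertex $z$, if $p_m$ is the vertex of this path nearest $z$, then $d(z,x)-d(z,y)=2m-d$. Hence $z$ fails to resolve $x,y$ precisely when $d$ is even and $p_m$ is the midpoint $c:=p_{d/2}$, i.e. when $z$ lies neither in the component $B_x$ of $T-c$ containing $x$ nor in the component $B_y$ containing $y$. Consequently every odd pair is resolved by any nonempty set, and an even pair is unresolved by $W$ if and only if $W\cap(B_x\cup B_y)=\emptyset$. I would also note at the outset that the legs $P_{i,j}-v_i$ are pairwise disjoint (an end vertex has a unique nearest major vertex, so $\sum_i\sigma_i=\sigma(T)$), and that, since each $v_i$ is a cut vertex off which its legs hang, removing $v_i$ separates the interior of each of its legs from the rest of $T$.

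Next, necessity of the structure. The key observation is that if two legs $P_{i,j}-v_i$ and $P_{i,j'}-v_i$ of the same $v_i$ both contain no vertex of $W$, then the two neighbours $a,a'$ of $v_i$ on those legs satisfy $d(z,a)=d(z,v_i)+1=d(z,a')$ for every $z\in W$ (shortest paths route through the cut vertex $v_i$), so $W$ is not resolving. Thus any resolving set meets at least $\sigma_i-1$ legs of each $v_i$, and since the legs are disjoint, $|W|\ge\sum_i(\sigma_i-1)=\sigma(T)-ex(T)=\dim(T)$ by Theorem~\ref{dim_tree}. A minimum resolving set therefore attains equality, which forces exactly one vertex in each of exactly $\sigma_i-1$ legs per $v_i$ and no vertices elsewhere — precisely the asserted form. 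For the converse, any $W$ of the asserted form has $|W|=\sigma(T)-ex(T)=\dim(T)$, so it suffices to show it resolves $T$; being a resolving set of minimum size, it is then a minimum resolving set.

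To prove that such a $W$ resolves, suppose not: by the projection lemma there is an even pair with midpoint $c$ and $W\cap(B_x\cup B_y)=\emptyset$. The midpoint $c$ has degree at least $2$; if $\deg(c)=2$ then $B_x,B_y$ are the only two components of $T-c$, whence $W\subseteq\{c\}$ and $|W|\le 1<2\le\dim(T)$ (a non-path tree has $\dim\ge 2$ by Theorem~\ref{dim_characterization}(a)), a contradiction. So $c$ is a major vertex. The crux is then the following branch lemma: a branch $B$ at a major vertex $c$ with $W\cap B=\emptyset$ must be the interior of a single unhit leg of $c$. Indeed, if $B$ contained a major vertex, take one, $v_0$, that is farthest from $c$; all of $v_0$'s branches pointing away from $c$ are then leg interiors lying inside $B$, there are at least two of them, and $W$ hits at least $\sigma_{v_0}-1\ge 1$ of them — contradicting $W\cap B=\emptyset$. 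Hence $B$ contains no major vertex, so it is a path ending in a leaf $\ell$ whose nearest major vertex is $c$; that is, $B$ is the interior of a leg of $c$, necessarily unhit. Applying this to both $B_x$ and $B_y$ exhibits two distinct unhit legs of the single exterior major vertex $c$, contradicting the one-exception rule, and completing the proof. The main obstacle is exactly this branch lemma: the ``farthest major vertex'' device is what makes it go through, together with the degree-two reduction that rules out the midpoint being an ordinary degree-two vertex. Everything else — the projection identity and the counting that pins $|W|$ to $\dim(T)$ — is routine once the legs are seen to be disjoint and to sum to $\sigma(T)$.
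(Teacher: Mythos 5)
Your proof is correct, but note that the paper itself offers no proof of this statement: Theorem~\ref{zhang_tree} is quoted from Poisson and Zhang~\cite{tree3} and used as a black box, so there is no internal argument to compare against. Measured against the literature, your route is genuinely different in its sufficiency half. The standard treatment (as in~\cite{tree1, tree3}) proves the lower bound the same way you do --- two unhit legs at the same exterior major vertex leave the two neighbors $a,a'$ of $v_i$ with $d(z,a)=d(z,v_i)+1=d(z,a')$ for all $z\in W$, and disjointness of the legs then pins $|W|$ to $\sigma(T)-ex(T)$ --- but it establishes that a set of the prescribed form resolves by a case analysis on where the pair $x,y$ sits (same leg, different legs of one exterior major vertex, different subtrees), comparing codes directly. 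Your projection lemma ($d(z,x)-d(z,y)=2m-d$, so a pair is unresolved iff its distance is even and $W$ misses both midpoint-components $B_x, B_y$) collapses that case analysis into a single forbidden configuration, and your ``farthest major vertex'' branch lemma is the right device to show an unhit component of $T-c$ must be a single leg interior, forcing two exceptions at one exterior major vertex. This buys a cleaner, more unified argument at the price of one small implicit step worth making explicit: when you say all of $v_0$'s branches pointing away from $c$ are leg interiors and that $W$ hits at least $\sigma_{v_0}-1\ge 1$ \emph{of them}, you are tacitly using that $v_0$ has no terminal vertices in the direction of $c$ (true, since $c$ is major and any leaf in that branch is strictly closer to $c$ or to an intermediate major vertex than to $v_0$), so that every leg of $v_0$, in particular every hit one save possibly the single exception, lies inside $B$; alternatively, even without that observation, at least one of the $\ge 2$ legs inside $B$ must be hit because only one leg of $v_0$ is exempt. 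With that one line added, the proof is complete; your uses of Theorems~\ref{dim_tree} and~\ref{dim_characterization}(a) (to get $|W|=\dim(T)\ge 2$ and dispose of the degree-two midpoint) are exactly right.
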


As a consequence of Theorem~\ref{zhang_tree}, we have the following

\begin{corollary}\label{obs_tree}
Let $T$ be a tree with $ex(T)=k \ge 1$, and let $v_1, v_2, \ldots, v_k$ be the exterior major vertices of $T$. For each $i$ ($1 \le i \le k$), let $ter_T(v_i)=\sigma_i \ge 1$ and let $N(v_i) \cap V(T_{v_i})=\{s_{i,1}, s_{i,2}, \ldots, s_{i, \sigma_i}\}$. Then 
\begin{itemize}
\item[(a)] $W \cap \{v_1, v_2, \ldots, v_k\}=\emptyset$ for any minimum resolving set $W$ of $T$; 
\item[(b)] $\cup_{i=1}^{k}(N(v_i)\cap V(T_{v_i})-\{s_{i,1}\})$ forms a minimum resolving set of $T$.
\end{itemize}
\end{corollary}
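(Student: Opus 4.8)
The plan is to derive Corollary~\ref{obs_tree} directly from Theorem~\ref{zhang_tree}, which is the structural characterization of all minimum resolving sets of a tree. The corollary makes two claims; I would treat them separately but both follow from carefully reading what Theorem~\ref{zhang_tree} says about where the vertices of a minimum resolving set $W$ may lie.

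For part (a), I would argue that no exterior major vertex $v_i$ can belong to a minimum resolving set. Theorem~\ref{zhang_tree} states that $W$ contains exactly one vertex from each of the paths $P_{i,j} - v_i$ (with one exception per $i$) and \emph{no other vertices of $T$}. Since each path $P_{i,j}-v_i$ is obtained from the $v_i$–$\ell_{i,j}$ path by deleting $v_i$ itself, none of the allowed vertices is $v_i$; and the clause ``$W$ contains no other vertices'' forbids $v_i$ from appearing for any other reason. Hence $v_i \notin W$ for every $i$, giving $W \cap \{v_1,\ldots,v_k\} = \emptyset$. This is essentially a bookkeeping observation once the statement of Theorem~\ref{zhang_tree} is in hand.

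For part (b), I would exhibit the set $W^\ast = \bigcup_{i=1}^{k}\bigl(N(v_i)\cap V(T_{v_i}) - \{s_{i,1}\}\bigr)$ and verify it meets the criterion of Theorem~\ref{zhang_tree}. The key observation is that $s_{i,j}$ is the neighbor of $v_i$ lying on the path $P_{i,j}$ toward the terminal vertex $\ell_{i,j}$, so $s_{i,j}$ is precisely a vertex of $P_{i,j}-v_i$. Removing $s_{i,1}$ means $W^\ast$ takes exactly one vertex (namely $s_{i,j}$) from each path $P_{i,j}-v_i$ for $j \in \{2,\ldots,\sigma_i\}$ and omits the path with $j=1$ entirely — so there is exactly one exception per index $i$, as required. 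Moreover $W^\ast$ contains no vertices outside these neighbor sets, matching the ``no other vertices'' clause. By Theorem~\ref{zhang_tree}, $W^\ast$ is therefore a minimum resolving set of $T$.

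The only subtlety — and the step I would watch most carefully — is the degenerate case $ter_T(v_i)=\sigma_i = 1$, where $N(v_i)\cap V(T_{v_i}) = \{s_{i,1}\}$ and so the contribution of $v_i$ to $W^\ast$ is empty. I would confirm this is consistent with Theorem~\ref{zhang_tree}: when $\sigma_i = 1$ there is a single path $P_{i,1}$, the ``exactly one with one exception'' rule forces that lone path to be the exception, and $W$ contributes nothing from $T_{v_i}$, exactly as $W^\ast$ does. Once this boundary case is checked, both parts follow cleanly and the proof is essentially a direct translation of Theorem~\ref{zhang_tree}; no genuine obstacle arises beyond making the indexing and the single-terminal case line up precisely.
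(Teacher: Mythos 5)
Your proposal is correct and matches the paper's approach exactly: the paper offers no separate proof, presenting the corollary as an immediate consequence of Theorem~\ref{zhang_tree}, which is precisely the derivation you spell out (including the correct handling of the $\sigma_i=1$ case, where the single path $P_{i,1}-v_i$ must itself be the exception and $T_{v_i}$ contributes nothing to $W^\ast$).
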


Now, we consider $\cdim(T)$ and $\cdim_T(v)$ for a tree $T$ and $v \in V(T)$.

\begin{lemma}\label{lemma_tree1}
Let $T$ be a tree with $ex(T)=1$. Then $\cdim(T)=\sigma(T)$.
\end{lemma}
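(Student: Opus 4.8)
The plan is to sandwich $\cdim(T)$ between $\sigma(T)$ from below and from above, exploiting the fact that $\dim(T)=\sigma(T)-1$ (Theorem~\ref{dim_tree}, since $ex(T)=1$) together with the characterization of minimum resolving sets of trees. Let $v$ denote the unique exterior major vertex of $T$. Since every end vertex of a tree that is not a path is a terminal vertex of exactly one major vertex, and $v$ is the only major vertex with positive terminal degree, all $\sigma(T)$ end vertices are terminal vertices of $v$; write $P_{1,1},\ldots,P_{1,\sigma}$ for the associated terminal paths (``legs'') emanating from $v$, where $\sigma=\sigma(T)=ter_T(v)$. Because $v$ is major, $\sigma\ge 3$. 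The geometric fact I would record first is that any two distinct legs meet only in $v$: a leg has no major vertex other than $v$ in its interior (otherwise the closest major vertex to its terminal end would not be $v$), so two legs cannot share the first edge leaving $v$.

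For the lower bound I would show that no minimum resolving set of $T$ induces a connected subgraph, whence $\cdim(T)\neq\dim(T)$ by Observation~\ref{obs}(c), and therefore $\cdim(T)\ge\dim(T)+1=\sigma(T)$. By Theorem~\ref{zhang_tree}, every minimum resolving set $W$ consists of exactly one vertex from each of $\sigma-1$ of the paths $P_{1,j}-v$ and contains no other vertices; in particular $v\notin W$, and since $\sigma-1\ge 2$ the set $W$ meets at least two distinct legs. Choosing $a,b\in W$ on distinct legs, the unique path in $T$ joining $a$ and $b$ must pass through $v$ by the geometric observation above, and since $v\notin W$ this path leaves $W$; hence $T[W]$ is disconnected.

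For the upper bound I would exhibit a connected resolving set of size $\sigma(T)$. By Corollary~\ref{obs_tree}(b) the set $W_0$ consisting of the neighbors of $v$ lying on $\sigma-1$ of the legs is a minimum resolving set of $T$. Then $S=\{v\}\cup W_0$ contains a resolving set, so $S$ resolves $T$, while $S$ induces a star centered at $v$ and is thus connected; as $|S|=1+(\sigma-1)=\sigma$, this gives $\cdim(T)\le\sigma(T)$. Combining the two bounds yields $\cdim(T)=\sigma(T)$.

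The step I expect to be the main obstacle is the lower bound: one must be sure that every minimum resolving set is forced onto at least two distinct legs and is therefore severed once $v$ is removed from it. This rests on the precise content of Theorem~\ref{zhang_tree} (that such a set lies entirely inside the legs, with at most one vertex per leg and one leg omitted) and on the fact that $ex(T)=1$ makes $v$ the unique ``hub'' joining the legs. The upper bound and the identity $\dim(T)=\sigma(T)-1$ are routine once these structural facts are in place.
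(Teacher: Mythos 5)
Your proof is correct and takes essentially the same route as the paper: the lower bound by invoking Theorem~\ref{zhang_tree} (with Corollary~\ref{obs_tree}(a)) to conclude that every minimum resolving set induces a disconnected subgraph, hence $\cdim(T)\ge\dim(T)+1=\sigma(T)$, and the upper bound via the connected resolving set $N[v]-\{s_1\}$ of size $\sigma(T)$. The extra details you supply---the spider structure of $T$ when $ex(T)=1$, and the observation that a minimum resolving set meets at least two legs whose connecting path passes through $v\notin W$---merely make explicit what the paper's proof leaves implicit.
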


\begin{proof}
Let $v$ be the (exterior) major vertex of $T$ with $ter_T(v)=\sigma$; then $\sigma \ge 3$. Let $N(v)=\{s_1, s_2, \ldots, s_{\sigma}\}$. By Corollary~\ref{obs_tree}(b) and Theorem~\ref{dim_tree}, $S=N(v)-\{s_1\}$ forms a minimum resolving set of $T$ with $|S|=\sigma-1=\dim(T)$. By Theorem~\ref{zhang_tree} and Corollary~\ref{obs_tree}(a), for any minimum resolving set $S'$ of $T$, $T[S']$ is disconnected; thus $\cdim(T) \ge \dim(T)+1=\sigma(T)$. On the other hand, $S^*=N[v] -\{s_1\}$ forms a connected resolving set of $T$ with $|S^*|=\sigma(T)$, and hence $\cdim(T) \le \sigma(T)$. Thus, $\cdim(T)=\sigma(T)$.~\hfill
\end{proof}

\begin{theorem}\label{cdim_tree}
Let $T$ be a tree of order at least two, and let $\mathcal{D}$ denote the set of vertices consisting of interior degree-two vertices and major vertices of terminal degree zero in $T$. Then
\begin{itemize}
\item[(a)]
\begin{equation*}
\cdim(T)=\left\{
\begin{array}{ll}
1 & \mbox{ if $T$ is a path},\\
|\mathcal{D}|+\sigma(T) & \mbox{ if $T$ is not a path};
\end{array}\right.
\end{equation*}
\item[(b)] if we let $\Gamma=\cup_{w \in \mathcal{M}_2(T)}(N[w] \cap V(T_w)) \cup \mathcal{M}_1(T) \cup \mathcal{D}$, then, for any $v \in V(T)$,
\begin{equation*}
\cdim_T(v)=\left\{
\begin{array}{ll}
1 & \mbox{if } \mathcal{M}(T)=\emptyset \mbox{ and $v$ is an end vertex},\\
2 & \mbox{if } \mathcal{M}(T)=\emptyset \mbox{ and $v$ is not an end vertex},\\
|\mathcal{D}|+\sigma(T) & \mbox{if } \mathcal{M}(T)\neq\emptyset \mbox{ and } v \in \Gamma,\\
|\mathcal{D}|+\sigma(T)+d(v,u) & \mbox{if } \mathcal{M}(T)\neq\emptyset \mbox{ and } v \in V(\mathcal{T}_1)-\Gamma,\\
|\mathcal{D}|+\sigma(T)+d(v,u)-1 & \mbox{if } \mathcal{M}(T)\neq\emptyset \mbox{ and } v \in V(\mathcal{T}_2)-\Gamma,
\end{array}
\right.
\end{equation*}
where $d(v,u)$ denotes the distance between $v$ and its nearest major vertex $u$.
\end{itemize}
\end{theorem}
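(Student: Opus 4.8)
For a tree the natural tool is Zhang's characterization of minimum resolving sets (Theorem~\ref{zhang_tree}) together with a ``backbone'' decomposition. Write $\mathcal{M}(T)=\{v_1,\dots,v_k\}$ for the exterior major vertices with $ter_T(v_i)=\sigma_i$, and let $B$ be the minimal subtree of $T$ spanning $\mathcal{M}(T)$. The first thing I would record is the structural identity $V(B)=\mathcal{M}_1(T)\cup\mathcal{M}_2(T)\cup\mathcal{D}$: every internal vertex of $B$ is an exterior major vertex, an interior degree-two vertex, or a major vertex of terminal degree $0$, and conversely each such vertex lies on $B$; moreover every \emph{leaf} of $B$ lies in $\mathcal{M}_2(T)$, since a backbone-leaf has only one backbone direction but degree $\ge 3$, forcing at least two terminal legs. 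Combined with $\sum_i\sigma_i=\sigma(T)$ (each end vertex is a terminal of exactly one $v_i$), this yields $|V(B)|=k+|\mathcal{D}|$ and $\sum_i(\sigma_i-1)=\sigma(T)-k$. I would dispose of the easy cases first: $\mathcal{M}(T)=\emptyset$ holds exactly when $T$ is a path, giving $\cdim(T)=1$ by Observation~\ref{obs}(a), and the subcases of (b) ($\cdim_T(v)=1$ for an end vertex, $2$ otherwise, since a single interior vertex fails to resolve its two neighbors but any two adjacent vertices resolve a path). The case $ex(T)=1$ is Lemma~\ref{lemma_tree1}, so I may assume $k\ge 2$.

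For part (a), the upper bound is the direct construction $S=V(B)\cup\{s_{i,j}\in N(v_i)\cap V(T_{v_i})\}$ taking, for each $v_i$, the neighbors on all but one of its terminal legs: this is connected ($B$ with pendant neighbors) and resolving by Theorem~\ref{zhang_tree} (it marks all but one leg of each $v_i$), of size $(k+|\mathcal{D}|)+(\sigma(T)-k)=|\mathcal{D}|+\sigma(T)$. The matching lower bound is where the main work lies. Let $S$ be any connected resolving set. The key step is $V(B)\subseteq S$: each leaf $u$ of $B$ lies in $\mathcal{M}_2(T)$, so $S$ meets at least one terminal leg of $u$; for two distinct leaves $u,u'$ of $B$ the unique $u$--$u'$ path in $T$ runs along $B$, and connectivity of $T[S]$ forces that whole path into $S$; since every vertex of the tree $B$ lies on a path between two of its leaves, $V(B)\subseteq S$ follows. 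The resolving condition then forces $S$ to meet at least $\sigma_i-1$ legs of each $v_i$, contributing at least $\sigma(T)-k$ further vertices disjoint from $V(B)$, whence $|S|\ge|\mathcal{D}|+\sigma(T)$.

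For part (b) with $\mathcal{M}(T)\neq\emptyset$, I would first pin down the minimum connected resolving sets exactly. Running the equality case of the part~(a) bound shows that such a set is precisely $V(B)$ together with exactly $\sigma_i-1$ of the neighbors $s_{i,j}$ of each $v_i$: on a marked leg one can never improve on the neighbor of $v_i$, because $v_i\in S$ and the path from $v_i$ to any chosen leg-vertex must lie in $S$. Taking the union over all such sets, as the unmarked leg varies, produces exactly $\Gamma$: each $w\in\mathcal{M}_2(T)$ contributes $w$ and every vertex of $N[w]\cap V(T_w)$, while each $w\in\mathcal{M}_1(T)$ contributes only $w$ (its single leg is always left unmarked). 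Thus $\Gamma$ is the union of all minimum connected resolving sets, and by definition $\cdim_T(v)=\cdim(T)=|\mathcal{D}|+\sigma(T)$ exactly when $v\in\Gamma$. For $v\notin\Gamma$ I write $u$ for its nearest major vertex; then $v$ lies on a leg of $u$ with $d(v,u)\ge 1$, and $v\in S$ forces the entire $u$--$v$ path into $S$, so that leg is marked and carries at least $d(v,u)$ vertices. Re-running the counting with this constraint (and a matching construction that simply routes the marked leg of $u$ out to $v$) gives $\cdim_T(v)=|\mathcal{D}|+\sigma(T)+d(v,u)$ when $u\in\mathcal{M}_1(T)$ and $|\mathcal{D}|+\sigma(T)+d(v,u)-1$ when $u\in\mathcal{M}_2(T)$.

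The main obstacle is the passage from ``resolving and connected'' to the precise forced vertex set, and it concentrates in two places. First, proving $V(B)\subseteq S$ for \emph{every} connected resolving set is the crux of the lower bound; the clean route is the observation that leaf-to-leaf paths of $B$ cover $V(B)$, so connectivity of $T[S]$ propagates the forced leg-vertices across the whole backbone. Second, the gap between the $+d(v,u)$ and $+d(v,u)-1$ formulas is delicate and must be bookkept carefully: when $u\in\mathcal{M}_2(T)$ the leg through $v$ may be counted among the $\sigma_u-1$ legs a minimum set already had to mark (so only $d(v,u)-1$ is genuinely new), whereas when $u\in\mathcal{M}_1(T)$ a minimum set marked none of $u$'s legs, so the full $u$--$v$ path of length $d(v,u)$ is extra cost. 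Getting these two facts exactly right, and re-verifying via Theorem~\ref{zhang_tree} that each constructed set remains resolving, is where the care is needed; the remaining arithmetic is routine given $|V(B)|=k+|\mathcal{D}|$ and $\sum_i(\sigma_i-1)=\sigma(T)-k$.
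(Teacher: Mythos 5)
Your proposal is correct and follows essentially the same route as the paper: the same upper-bound construction (the backbone together with all-but-one leg neighbors of each exterior major vertex, which is exactly the paper's set $S'$ in part~(a) and $S^*$ in part~(b)), and the same lower bound via Theorem~\ref{zhang_tree} (at least $\sigma_i-1$ marked legs per exterior major vertex) combined with connectivity forcing the backbone into any connected resolving set. Your explicit leaf-to-leaf-path argument for $V(B)\subseteq S$, and your exact characterization of \emph{all} minimum connected resolving sets (whose union is $\Gamma$), are in fact tighter renderings of the paper's terser steps --- the paper only exhibits the two sets $S^*$ and $S'$ to cover $\Gamma$ and argues the lower bounds for $v\notin\Gamma$ via the looser inequality $d(v,R)\ge d(v,u)$ for $R\in\mathfrak{C}$, whereas your re-run of the counting with the forced $u$--$v$ path makes the $+d(v,u)$ versus $+d(v,u)-1$ bookkeeping airtight. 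One scope slip to repair: for part~(b) you set aside $ex(T)=1$ by citing Lemma~\ref{lemma_tree1}, but that lemma only yields part~(a) for such trees; the formulas of part~(b) for a spider still require your counting, and your backbone argument needs two leaves of $B$, which fails when $B$ is a single vertex. The fix is one line --- for $ex(T)=1$ the unique exterior major vertex $u$ has $\sigma(T)-1\ge 2$ marked legs forced by resolvability, so connectedness of $T[S]$ already forces $u\in S$, after which your equality analysis and the $\Gamma$-characterization go through verbatim --- but as written your blanket assumption $k\ge 2$ leaves that case of part~(b) unproven.
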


\begin{proof}
(a) If $ex(T)=0$, then $T$ is a path and $\cdim(T)=1$ by Observation~\ref{obs}(a). If $ex(T)=1$, then $\mathcal{D}=\emptyset$ and $\cdim(T)=\sigma(T)$ by Lemma~\ref{lemma_tree1}. So, suppose that $ex(T) \ge 2$; let $\mathcal{M}_2(T)=\{u_1, u_2, \ldots, u_k\}$ with $ter_T(u_i)=\sigma_i \ge 2$ and $N(u_i) \cap V(T_{u_i})=\{s_{i,1}, s_{i,2}, \ldots, s_{i, \sigma_i}\}$, where $1 \le i \le k$. Let $S=\cup_{i=1}^{k}(N(u_i)\cap V(T_{u_i})-\{s_{i,1}\})$. If $\mathcal{D}=\emptyset$ and $\mathcal{M}_1(T)=\emptyset$, then $\cdim(T)=\sigma(T)$ since $S \cup \mathcal{M}_2(T)$ forms a minimum connected resolving set of $T$ by Theorem~\ref{zhang_tree} and Corollary~\ref{obs_tree}. So, suppose that $\mathcal{D} \neq\emptyset$ or $\mathcal{M}_1(T) \neq \emptyset$; then, for each vertex $x \in \mathcal{D} \cup \mathcal{M}_1(T)$, there exist two distinct vertices $w_1,w_2 \in \mathcal{M}_2(T)$ such that $x$ lies on the $w_1-w_2$ path. By Theorem~\ref{zhang_tree} and Corollary~\ref{obs_tree} and the fact that $T[S \cup \mathcal{M}_2(T)]$ is disconnected, $\cdim(T) \ge  (\sum_{i=1}^{k} ter_T(u_i))+|\mathcal{M}_1(T)|+|\mathcal{D}|= (\sum_{u \in \mathcal{M}(T)} ter_T(u)) + |\mathcal{D}|= \sigma(T) + |\mathcal{D}|$. On the other hand, $S'=\cup_{i=1}^{k}(N[u_i] \cap V(T_{u_i})-\{s_{i,1}\}) \cup \mathcal{M}_1(T) \cup \mathcal{D}$ forms a connected resolving set of $T$ with $|S'|=\sigma(T)+|\mathcal{D}|$, and thus $\cdim(T) \le |S'|=\sigma(T)+|\mathcal{D}|$. In each case, $\cdim(T)=\sigma(T)+|\mathcal{D}|$ when $ex(T) \ge 2$.\\

(b) Let $v \in V(T)$; then $v\in V(\mathcal{T}_1) \cup V(\mathcal{T}_2) \cup \mathcal{D}$. Let $\mathfrak{C}$ be the collection of all minimum connected resolving sets of $T$.

\textbf{Case 1: $\mathcal{M}(T)=\emptyset$.} In this case, $T$ is a path. If $v$ is an end vertex, then $\cdim_T(v)=1$ by Theorem~\ref{cdim_characterization}(a). If $v$ is not an end vertex, say $vw \in E(T)$, then $\{v,w\}$ forms a connected resolving set of $T$, and thus $\cdim_T(v)=2$ by Theorem~\ref{cdim_characterization}(a).

\textbf{Case 2: $\mathcal{M}(T)\neq\emptyset$.} In this case, $\mathcal{M}_2(T)\neq\emptyset$. Let $\mathcal{M}_2(T)=\{w_1,w_2, \ldots, w_k\}$ with $ter_T(w_i)=\sigma_i \ge 2$ and $N(w_i) \cap V(T_{w_i})=\{s_{i,1}, s_{i,2}, \ldots, s_{i, \sigma_i}\}$, where $1 \le i \le k$. Let $S^*=\cup_{i=1}^{k}(N[w_i] \cap V(T_{w_i})-\{s_{i,1}\}) \cup \mathcal{M}_1(T) \cup \mathcal D$ and $S'=(S^*-\cup_{i=1}^{k}\{s_{i,2}\}) \cup (\cup_{j=1}^{k}\{s_{j,1}\})$. Note that both $S^*$ and $S'$ are  connected resolving sets of $T$ with $|S^*|=|S'|=\sigma(T)+|\mathcal{D}|=\cdim(T)$ by Corollary~\ref{obs_tree} and (a) of the present theorem; thus $S^*,S' \in\mathfrak{C}$.

First, let $v\in \Gamma$; then $v\in S^*$ or $v \in S'$. Since there exists a minimum connected resolving set of $T$ containing $v$, $\cdim_T(v)=\cdim(T)=\sigma(T)+|\mathcal{D}|$. 

Second, let $v \in V(\mathcal{T}_1)-\Gamma$; then $v \in V(T_u)-\{u\}$ for some $u \in \mathcal{M}_1(T)$. Let $Q$ be the $u-v$ path, and let $R_1 = S^* \cup V(Q)$; note that $S^* \cap V(Q)=\{u\}$. Since $R_1$ is a connected resolving set of $T$ containing $v$, $\cdim_T(v) \le |R_1|=|S^*|+|V(Q)|-1=\sigma(T)+|\mathcal{D}|+d(v,u)$. On the other hand, $d(v, R) \ge d(v,u)$ for any $R \in \mathfrak{C}$, and hence $\cdim_T(v) \ge |S^*|+d(v,u)=\sigma(T)+|\mathcal{D}|+d(v,u)$. Thus, $\cdim_T(v)=\sigma(T)+|\mathcal{D}|+d(v,u)$.

Third, let $v \in V(\mathcal{T}_2)-\Gamma$; then $v \in V(T_u)-N[u]$ for some $u \in \mathcal{M}_2(T)$. Let $Q'$ be the $u-v$ path, and let $t\in N(u) \cap V(T_u)$ lie on the $u-v$ path. Let $S \in \mathfrak{C}$ with $|S|=\cdim(T)$ such that $t \in S$. Let $R_2=S \cup V(Q')$; then $S \cap V(Q')=\{u,t\}$. Since $R_2$ is a connected resolving set of $T$ containing $v$, $\cdim_T(v) \le |R_2|=|S|+|V(Q')|-2=\sigma(T)+|\mathcal{D}|+d(v,u)-1$. On the other hand, $d(v, R) \ge d(v,t)=d(v,u)-1$  for any $R \in \mathfrak{C}$, and hence $\cdim_T(v) \ge |S|+d(v,u)-1=\sigma(T)+|\mathcal{D}|+d(v,u)-1$. Thus, $\cdim_T(v)=\sigma(T)+|\mathcal{D}|+d(v,u)-1$.~\hfill
\end{proof}

Next, we consider the Petersen graph.

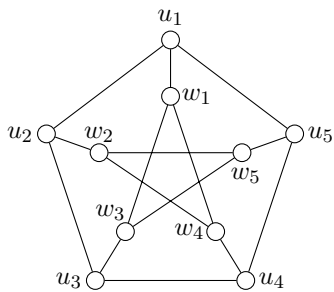
\begin{figure}[ht]
\centering
\begin{tikzpicture}[scale=.5, transform shape]

\node [draw, shape=circle, scale=1.3] (1) at  (0,3.5) {};
\node [draw, shape=circle, scale=1.3] (2) at  (-3.3, 1) {};
\node [draw, shape=circle, scale=1.3] (3) at  (-2, -2.9) {};
\node [draw, shape=circle, scale=1.3] (4) at  (2, -2.9) {};
\node [draw, shape=circle, scale=1.3] (5) at  (3.3, 1) {};

\node [draw, shape=circle, scale=1.3] (11) at  (0,2) {};
\node [draw, shape=circle, scale=1.3] (22) at  (-1.9, 0.5) {};
\node [draw, shape=circle, scale=1.3] (33) at  (-1.2, -1.6) {};
\node [draw, shape=circle, scale=1.3] (44) at  (1.2, -1.6) {};
\node [draw, shape=circle, scale=1.3] (55) at  (1.9, 0.5) {};

\node [scale=1.8] at (0,4.1) {$u_1$};
\node [scale=1.8] at (-4,1) {$u_2$};
\node [scale=1.8] at (-2.7,-2.9) {$u_3$};
\node [scale=1.8] at (2.7,-2.9) {$u_4$};
\node [scale=1.8] at (4,1) {$u_5$};

\node [scale=1.8] at (0.7,2) {$w_1$};
\node [scale=1.8] at (-1.9,1) {$w_2$};
\node [scale=1.8] at (-1.6,-1.1) {$w_3$};
\node [scale=1.8] at (0.5,-1.6) {$w_4$};
\node [scale=1.8] at (2,-0.1) {$w_5$};

\draw(1)--(2)--(3)--(4)--(5)--(1);
\draw(11)--(44)--(22)--(55)--(33)--(11);
\draw(1)--(11);\draw(2)--(22);\draw(3)--(33);\draw(4)--(44);\draw(5)--(55);

\end{tikzpicture}
\caption{The Petersen graph $\mathcal{P}$ and its labeling.}\label{Petersen}
\end{figure}

\begin{proposition}
Let $\mathcal{P}$ be the Petersen graph. Then $\cdim(\mathcal{P})=4=\cdim_{\mathcal{P}}(v)$ for any $v\in V(\mathcal{P})$.
\end{proposition}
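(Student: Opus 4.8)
The plan is to exploit the high symmetry of the Petersen graph $\mathcal{P}$ to show that $\cdim_{\mathcal{P}}(v)$ is the same for every vertex $v$, reducing the problem to computing $\cdim(\mathcal{P})$ alone. Since $\mathcal{P}$ is vertex-transitive, Observation~\ref{obs_transitive} immediately gives $\cdim_{\mathcal{P}}(v)=\cdim(\mathcal{P})$ for every $v\in V(\mathcal{P})$, so I only need to establish the single numerical value $\cdim(\mathcal{P})=4$. I would prove this by separately establishing the lower bound $\cdim(\mathcal{P})\ge 4$ and the upper bound $\cdim(\mathcal{P})\le 4$.

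For the upper bound, I would exhibit an explicit connected resolving set of size $4$. Using the labeling of Figure~\ref{Petersen}, I would pick four mutually ``close'' vertices forming a connected induced subgraph---for instance a set such as $\{u_1,u_2,w_1,w_2\}$ or a short path---verify that the induced subgraph is connected, and then check that all ten metric codes are distinct. Because $\mathcal{P}$ has diameter $2$, every coordinate of every code lies in $\{0,1,2\}$, which makes the verification a short finite check: I would tabulate the distance vectors of all ten vertices to the four chosen landmarks and confirm no two coincide. This is routine but must be carried out carefully to make sure the chosen set is genuinely connected and genuinely resolving.

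For the lower bound, I would first invoke $\dim(\mathcal{P})=3$ from Theorem~\ref{dim_Petersen} together with Observation~\ref{obs}(a), giving $\cdim(\mathcal{P})\ge 3$, and then rule out the possibility $\cdim(\mathcal{P})=3$. To do this I would argue that no connected resolving set of size $3$ exists. A connected set of three vertices induces either a path $P_3$ or a triangle; however, $\mathcal{P}$ is triangle-free, so any connected $3$-set is an induced path on two adjacent edges. I would then show, using vertex-transitivity (and the edge- and arc-transitivity of $\mathcal{P}$) to reduce to a single representative configuration, that such a path fails to resolve some pair of vertices---typically a pair of vertices equidistant from all three landmarks, which exists because a length-two path ``sees'' too symmetric a neighborhood in the diameter-$2$ graph.

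The main obstacle I anticipate is the lower bound, specifically the exclusion of connected resolving sets of size $3$. The symmetry reduction must be justified carefully: I must confirm that up to automorphism there is essentially one induced $P_3$ in $\mathcal{P}$ (which follows from $\mathcal{P}$ being distance-transitive, hence $2$-arc-transitive), so that checking a single representative suffices. Once the configuration is fixed, I would produce the unresolved pair explicitly---two vertices $x,y$ with $\code_S(x)=\code_S(y)$ for the representative three-vertex connected set $S$---thereby contradicting resolvability. With both bounds in hand, $\cdim(\mathcal{P})=4$ follows, and the vertex-transitivity argument completes the proof that $\cdim_{\mathcal{P}}(v)=4$ for all $v$.
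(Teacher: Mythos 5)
Your proposal is correct, and its skeleton matches the paper's: vertex-transitivity (Observation~\ref{obs_transitive}) reduces everything to $\cdim(\mathcal{P})$, the bound $\cdim(\mathcal{P})\ge\dim(\mathcal{P})=3$ comes from Theorem~\ref{dim_Petersen} with Observation~\ref{obs}(a), and an explicit connected $4$-set gives the upper bound (your candidate $\{u_1,u_2,w_1,w_2\}$ does work: its ten codes are pairwise distinct, just as the paper's choice $\{u_1,w_1,w_4,w_2\}$). Where you genuinely diverge is the key step of excluding a connected $3$-set. The paper fixes only an \emph{edge}: by edge-transitivity it assumes $\{u_1,w_1\}\subset S$, observes that $u_3,u_4,w_2,w_5$ all receive code $(2,2)$, and then kills \emph{every} possible third vertex at once by pigeonhole --- since $\diam(\mathcal{P})=2$, one extra landmark takes at most three distance values on four vertices. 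You instead normalize the entire connected $3$-set: triangle-freeness forces it to be a $P_3$, $2$-arc-transitivity reduces to one representative, and you exhibit an unresolved pair (indeed, for $S=\{w_1,u_1,u_2\}$ the pair $u_3,w_2$ shares the code $(2,2,1)$). Both routes are valid; the paper's is more economical (only edge-transitivity is needed, no classification of the shape of the $3$-set) and as a byproduct shows that no resolving set of $\mathcal{P}$ contains an edge, which is exactly the paper's ensuing Porism --- your version does not yield that.

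One small repair is needed in your symmetry justification: the implication ``distance-transitive, hence $2$-arc-transitive'' is false in general --- the Johnson graph $J(5,2)$, the complement of $\mathcal{P}$, is distance-transitive but not $2$-arc-transitive, since some pairs of neighbors are adjacent and others are not. For $\mathcal{P}$ the conclusion does hold, but for a different reason: girth $5$ means two vertices at distance $2$ have a unique common neighbor, so distance-transitivity on pairs at distance $2$ pins down the middle vertex of a $2$-arc; alternatively, in the Kneser model the vertex stabilizer $S_2\times S_3$ acts $2$-transitively on the three (pairwise nonadjacent) neighbors, so all $P_3$'s lie in one orbit. With that fixed, your argument is complete.
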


\begin{proof}
Let $\mathcal{P}$ be the Petersen graph with its labeling as in Figure~\ref{Petersen}. We note that $\mathcal{P}$ is vertex-transitive and edge-transitive (see~\cite{petersen}); thus, $\cdim(\mathcal{P})=\cdim_{\mathcal{P}}(v)$, for any $v\in V(\mathcal{P})$, by Observation~\ref{obs_transitive}. By Theorem~\ref{dim_Petersen} and Observation~\ref{obs}(a), $\cdim(\mathcal{P}) \ge \dim(\mathcal{P})=3$. Since $\{u_1, w_1, w_4, w_2\}$ forms a connected resolving set of $\mathcal{P}$, $\cdim(\mathcal{P}) \le 4$. We will show that $\cdim(\mathcal{P}) \ge 4$. Assume, to the contrary, that $\cdim(\mathcal{P})=3$. For any minimum connected resolving set $S$ of $\mathcal{P}$, we may assume that $X=\{u_1, w_1\} \subset S$ by edge-transitivity of $\mathcal{P}$. Then $\code_X(u_3)=\code_X(u_4)=\code_X(w_2)=\code_X(w_5)$. Since $\diam(\mathcal{P})=2$, any vertex in $V(\mathcal{P})-X$ can distinguish at most three vertices of $\{u_3, u_4, w_2, w_5\}$ by distances; thus $\cdim(\mathcal{P}) \neq 3$. Therefore, by Observation~\ref{obs_transitive}, $\cdim(\mathcal{P})=4=\cdim_{\mathcal{P}}(v)$ for any $v\in V(\mathcal{P})$.~\hfill
\end{proof}

\begin{porism}
If $B$ is a minimum resolving set of the Petersen graph $\mathcal{P}$, then $\mathcal{P}[B]$ is an empty graph.
\end{porism}

Next, we consider wheel graphs. We begin by recalling the following

\begin{observation}\emph{\cite{wheel1}}\label{obs_wheel}
For $n \ge 7$, let $W_n$ be the wheel graph on $n+1$ vertices, and let $w$ be the central vertex of $W_n$. Then, for any minimum resolving set $S$ of $W_n$, $w \not\in S$ and $G[S]$ is disconnected.
\end{observation}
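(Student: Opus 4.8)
The plan is to analyze the structure of a minimum resolving set $S$ of $W_n$ (for $n \ge 7$) and show that $S$ can never induce a connected subgraph, with the two stated facts $w \notin S$ and $G[S]$ disconnected following from the known value $\dim(W_n) = \lfloor \frac{2n+2}{5}\rfloor$ given in Theorem~\ref{dim_wheel}. First I would set up notation: label the rim vertices $v_1, v_2, \ldots, v_n$ cyclically along $C_n$, with $w$ the central (hub) vertex adjacent to all rim vertices. The crucial distance fact is that $\diam(W_n) = 2$ for $n \ge 4$, so every metric code entry is $0$, $1$, or $2$. In particular, for any rim vertex $v_i$ and any rim vertex $v_j$, we have $d(v_i, v_j) = 1$ if $v_j$ is a rim-neighbor of $v_i$ and $d(v_i, v_j) = 2$ otherwise, while $d(w, v_i) = 1$ for all $i$. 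This severely limits the resolving power of any vertex.

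The key step for $w \notin S$ is to observe that $w$ is useless in any minimum resolving set: since $d(w, v_i) = 1$ for every rim vertex and $d(w,w)=0$, the vertex $w$ only distinguishes itself from the rim and cannot separate any pair of distinct rim vertices. So I would argue that if $w \in S$ for a minimum resolving set $S$, then $S \setminus \{w\}$ must still resolve all pairs of rim vertices, and it resolves $w$ from every rim vertex as well (any rim vertex $v_i$ has $d(v_i, v_i) = 0 \ne 1 = d(v_i, w)$, so any nonempty set of rim vertices in $S$ already distinguishes $w$). Hence $S \setminus \{w\}$ would be a smaller resolving set unless $S \setminus \{w\} = \emptyset$, which is impossible for $n \ge 7$ since $\dim(W_n) \ge 3$. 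This contradicts minimality, giving $w \notin S$.

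For the disconnectedness of $G[S]$, the plan is to exploit the combinatorial structure of minimum resolving sets on the rim. Since $w \notin S$, the set $S$ consists entirely of rim vertices, and $G[S]$ is connected if and only if $S$ is a set of consecutive rim vertices (an arc of $C_n$), because the only edges among rim vertices are those of the cycle $C_n$. I would then show that a single arc of consecutive rim vertices of size $\dim(W_n) = \lfloor \frac{2n+2}{5} \rfloor$ cannot resolve $W_n$ for $n \ge 7$. The idea is that a resolving set of rim vertices must be ``well-spread'' around the cycle: the known characterizations of resolving sets of $W_n$ (cf.~\cite{wheel1, wheel2}) force gaps between chosen vertices to be bounded (each maximal gap of unchosen consecutive rim vertices can have length at most $3$, roughly, to avoid two unchosen vertices with identical codes). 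A single consecutive arc leaves one enormous gap of length $n - \dim(W_n)$, which for $n \ge 7$ vastly exceeds the allowed gap length, so two rim vertices in that long gap share the same code (both at distance $2$ from every element of the arc except possibly the arc's endpoints). This yields a contradiction, so $G[S]$ is disconnected.

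The main obstacle will be making the ``gap'' argument fully rigorous: I would need to pin down exactly which pairs of rim vertices an arc $S$ fails to resolve and verify that for $n \ge 7$ the arc $S$ of size $\lfloor \frac{2n+2}{5}\rfloor$ genuinely leaves two unresolved vertices. The cleanest route is to note that a vertex $v_i \in S$ resolves a pair $\{v_j, v_k\}$ only when exactly one of $v_j, v_k$ lies in the closed rim-neighborhood $\{v_{i-1}, v_i, v_{i+1}\}$; thus the rim vertices strictly interior to the complementary arc (those at rim-distance $\ge 2$ from every vertex of $S$) all receive the identical all-twos code. Since the complementary arc has length $n - \lfloor\frac{2n+2}{5}\rfloor$, which is at least $3$ for $n \ge 7$, it contains at least two such interior vertices with equal codes, contradicting that $S$ resolves $W_n$. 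I would verify the boundary arithmetic for the small cases $n \in \{7, 8, 9\}$ by hand and handle the generic case $n \ge 10$ by the inequality $n - \lfloor \frac{2n+2}{5}\rfloor \ge 3$, completing the proof.
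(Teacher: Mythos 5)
First, a framing note: the paper offers no proof of this observation at all --- it is quoted directly from \cite{wheel1} --- so your self-contained argument can only be judged on its own merits, and its overall strategy (use $\dim(W_n)=\lfloor\frac{2n+2}{5}\rfloor$, note that $\diam(W_n)=2$ makes the hub $w$ useless for separating rim vertices, and show that a connected rim set, necessarily an arc of $C_n$, leaves interior rim vertices sharing the all-$2$s code) is the right one. However, the first half has a genuine gap. You justify ``$S\setminus\{w\}$ resolves $w$ from every rim vertex'' by the observation $d(v_i,v_i)=0\ne 1=d(v_i,w)$, but that only separates $w$ from rim vertices \emph{inside} $S$. For a rim vertex $u\notin S$, you need some $v\in S\setminus\{w\}$ with $d(v,u)=2$, i.e., $v$ not a rim-neighbor of $u$. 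This is automatic when $|S\setminus\{w\}|\ge 3$, since $u$ has only two rim-neighbors --- hence for $n\ge 9$, where $\dim(W_n)\ge 4$ --- but it genuinely fails to be automatic for $n\in\{7,8\}$, where $\dim(W_n)=3$ and $S\setminus\{w\}$ could consist exactly of the two rim-neighbors $v_{i-1},v_{i+1}$ of $u=v_i$, leaving the pair $\{w,v_i\}$ unresolved by $S\setminus\{w\}$. To close this case you must show directly that no $3$-set containing $w$ resolves $W_n$ for $n\ge 7$: the two rim vertices of such a set assign to the remaining $n-2\ge 5$ rim vertices codes lying in $\{1,2\}^2$, only four values, so two rim vertices receive identical codes and $w$ (at distance $1$ from both) cannot separate them. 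With that pigeonhole patch, the minimality contradiction goes through.

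Second, there is an off-by-one in the disconnectedness half. A complementary arc of length $m$ contains $m-2$ vertices at rim-distance at least $2$ from every vertex of $S$, so your inference ``length at least $3$, hence at least two interior vertices with equal codes'' is wrong as stated: $m=3$ yields only \emph{one} all-$2$s vertex, whose code is then unique. What you need is $m\ge 4$, and fortunately this does hold for all $n\ge 7$, since $m=n-\lfloor\frac{2n+2}{5}\rfloor\ge\frac{3n-2}{5}\ge\frac{19}{5}>3$; with $m\ge 4$ the two interior vertices exist, both receive the all-$2$s code with respect to $S$ (and $w\notin S$, so there is no hub coordinate to distinguish them), contradicting resolvability. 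So: correct approach and a worthwhile addition relative to the paper, which merely cites \cite{wheel1}, but as written the hub-deletion step is incomplete precisely for $n\in\{7,8\}$ and the gap-length threshold must be corrected from $3$ to $4$.
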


\begin{proposition}\label{cdim_wheel}
For $n \ge 3$, let $W_n$ be the wheel graph on $n+1$ vertices, and let $w$ be the central vertex of $W_n$. Then, for any $v\in V(W_n)$,
\begin{itemize}
\item[(a)] $\cdim(W_3)=3=\cdim_{W_3}(v)$;
\item[(b)] for $n \in \{4,5\}$, $\cdim(W_n)=2=\cdim_{W_n}(v)$ for $v\neq w$, and $\cdim_{W_n}(w)=3$;
\item[(c)] for $n \ge 6$, $\cdim(W_n)=\lfloor\frac{2n+2}{5}\rfloor+1=\cdim_{W_n}(v)$.
\end{itemize}
\end{proposition}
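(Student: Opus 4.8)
The plan is to treat the three cases according to $n$, in each case establishing a lower bound via the known value of $\dim(W_n)$ together with Observation~\ref{obs}(a), and a matching upper bound by exhibiting an explicit connected resolving set, finally invoking Observation~\ref{obs}(b) or direct argument to pin down $\cdim_{W_n}(v)$ for every vertex $v$. I label the rim vertices $c_1, c_2, \ldots, c_n$ cyclically and let $w$ be the hub, so that $d(w, c_i)=1$ for all $i$ and $d(c_i, c_j) \in \{1,2\}$ (since any two rim vertices are at distance at most two through $w$). The key structural fact I will exploit repeatedly is this diameter-two phenomenon: a vertex $z$ resolves a pair $\{x,y\}$ only if the distances $d(x,z), d(y,z)$ differ, and because all distances lie in $\{0,1,2\}$, the resolving power of any single vertex is quite limited, which is exactly what forces the lower bounds.

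For part (a), $W_3=K_4$ is complete, so by Theorem~\ref{dim_characterization}(b) and the Corollary following Theorem~\ref{cdim_characterization} we get $\cdim(K_4)=3=\cdim_{K_4}(v)$ for every $v$. For part (b), when $n\in\{4,5\}$ I will check directly that two adjacent rim vertices (say $c_1$ and $c_2$) resolve $W_n$, giving $\cdim_{W_n}(v)=2$ for $v\neq w$ via Observation~\ref{obs}(a) (noting $W_n$ is not a path) and then spreading this to all non-hub vertices using symmetry or Observation~\ref{obs}(b) together with the edge between consecutive rim vertices. The hub case $\cdim_{W_n}(w)=3$ is the delicate point here: I must show no two-element connected set containing $w$ resolves $W_n$ — such a set is $\{w, c_i\}$, and one checks $\{w,c_i\}$ cannot distinguish the two rim neighbors of $c_i$ from each other (both at distance $1$ from $w$ and from $c_i$), forcing $\cdim_{W_n}(w)\ge 3$; a suitable three-set such as $\{w, c_i, c_{i+1}\}$ then gives the upper bound.

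For part (c), the regime $n\ge 6$ is the main workhorse. The lower bound $\cdim(W_n)\ge \lfloor\frac{2n+2}{5}\rfloor+1$ will come from combining Theorem~\ref{dim_wheel} with Observation~\ref{obs_wheel}: the latter asserts that for $n\ge 7$ every \emph{minimum} resolving set avoids $w$ and induces a disconnected subgraph, so no minimum resolving set is connected, whence $\cdim(W_n)\ge \dim(W_n)+1=\lfloor\frac{2n+2}{5}\rfloor+1$ by Observation~\ref{obs}(c). The case $n=6$ must be handled separately since there $\dim(W_6)=3$ is exceptional and Observation~\ref{obs_wheel} is stated only for $n\ge 7$; I will verify $n=6$ by a direct small argument. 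For the matching upper bound I must construct, for general $n\ge 6$, a connected resolving set of size exactly $\lfloor\frac{2n+2}{5}\rfloor+1$. The natural candidate is a minimum resolving set of the known ``gap'' type — one selects rim vertices so that consecutive chosen vertices are separated by gaps of size at most three, the pattern underlying the $\lfloor\frac{2n+2}{5}\rfloor$ formula — and then adjoins the single hub vertex $w$ to connect it, since $w$ is adjacent to every rim vertex. Adding $w$ both guarantees connectivity and, one checks, preserves the resolving property, yielding a connected resolving set one larger than $\dim(W_n)$.

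The main obstacle I anticipate is the upper-bound construction for part (c): I must produce an explicit placement of $\lfloor\frac{2n+2}{5}\rfloor-1$ rim vertices that already resolves all rim pairs, verify it case-by-case according to $n \bmod 5$ (the floor function makes the arithmetic mildly irritating), and confirm that appending $w$ keeps the code of every vertex distinct. Propagating $\cdim_{W_n}(v)=\cdim(W_n)$ to \emph{all} $v$, including $w$ and vertices not in my chosen set, is a secondary obstacle; I expect to handle this by exhibiting, for each target vertex $v$, a minimum-size connected resolving set containing $v$ — for rim vertices this follows from the rotational symmetry of the construction, and for the hub $w$ it follows because $w$ is already a member of the constructed set. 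I expect parts (a) and (b) to be short, with essentially all the real work concentrated in the $n\ge 6$ analysis.
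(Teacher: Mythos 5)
Your outline reproduces the paper's proof almost exactly: part (a) via the complete-graph characterization, part (b) via the direct check that two adjacent rim vertices resolve while $\{w,c_i\}$ fails on the two rim neighbors of $c_i$, and part (c) with the lower bound from Observation~\ref{obs_wheel} (no minimum resolving set of $W_n$, $n\ge 7$, contains $w$ or is connected) and the upper bound from adjoining $w$ to a minimum resolving set, with symmetry propagating the value to every vertex. Three corrections, however. First, an off-by-one: you write that you must place $\lfloor\frac{2n+2}{5}\rfloor-1$ rim vertices resolving all rim pairs, but you need $\lfloor\frac{2n+2}{5}\rfloor$ of them (plus $w$, totaling $\lfloor\frac{2n+2}{5}\rfloor+1$); indeed, for $n\ge 7$ no set of $\lfloor\frac{2n+2}{5}\rfloor-1$ rim vertices can resolve all rim pairs, since adjoining $w$ to such a set would produce a minimum resolving set containing $w$, contradicting Observation~\ref{obs_wheel}. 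Second, your anticipated ``main obstacle'' --- the explicit gap-pattern construction with case analysis modulo $5$, and the check that appending $w$ preserves resolving --- is unnecessary: Theorem~\ref{dim_wheel} already guarantees an (abstract) minimum resolving set $S$ of size $\lfloor\frac{2n+2}{5}\rfloor$, Observation~\ref{obs_wheel} says $S$ consists of rim vertices so $S\cup\{w\}$ is connected ($w$ being universal), and any superset of a resolving set trivially resolves; this is precisely how the paper argues, with no construction at all. Third, at $n=6$ your generic mechanism breaks in both directions: there $\cdim(W_6)=\dim(W_6)=3$ (the exceptional value in Theorem~\ref{dim_wheel}), so ``minimum resolving set plus $w$'' overshoots to $4$, and your blanket claim that the hub case ``follows because $w$ is already a member of the constructed set'' is false for $n=6$. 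The paper handles this by exhibiting two minimum connected resolving sets, $\{u_1,u_2,u_3\}$ for the rim vertices and $\{u_1,w,u_3\}$ for the hub; your promised ``direct small argument'' must include something like the latter set, which your plan as written does not identify.
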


\begin{proof}
Let $W_n=C_n+K_1$ be the wheel graph on $n+1$ vertices, where $C_n$ is given by $u_1, u_2, \ldots, u_n, u_1$ and $w$ is the central vertex of $W_n$; note that the vertices on the $C_n$ belong to the same orbit under the automorphism group of $W_n$. Let $S$ be an arbitrary minimum resolving set of $W_n$.

First, let $n=3$; then $W_3=K_4$. By Observation~\ref{obs_transitive} and Theorem~\ref{cdim_characterization}(b), $\cdim(W_3)=3=\cdim_{W_3}(v)$ for any vertex $v \in V(W_3)$.

Second, let $n \in \{4,5\}$; then $\dim(W_n)=2$ by Theorem~\ref{dim_wheel}. So, $w \not\in S$. Since $S'=\{u_1,u_2\}$ forms a minimum connected resolving set of $W_n$, $\cdim(W_n)=2=\cdim_{W_n}(v)$ for $v\neq w$ and $\cdim_{W_n}(w)=3$.

Third, let $n=6$; then $\dim(W_6)=3$ by Theorem~\ref{dim_wheel}. Since both $S_1=\{u_1,u_2,u_3\}$ and $S_2=\{u_1, w, u_3\}$ form minimum connected resolving sets of $W_6$, $\cdim(W_6)=3=\cdim_{W_6}(v)$ for any $v\in V(W_6)$ by Observations~\ref{obs}(c) and~\ref{r=dim}.

Next, let $n \ge 7$. By Observation~\ref{obs_wheel}, $w\not\in S$ and $G[S]$ is disconnected; thus, $\cdim(W_n) \ge \dim(W_n)+1=\lfloor\frac{2n+2}{5}\rfloor+1$ by Theorem~\ref{dim_wheel}. On the other hand, $S^*= S \cup \{w\}$ forms a connected resolving set of $W_n$, and thus $\cdim(W_n) \le |S^*|=|S|+1=\lfloor\frac{2n+2}{5}\rfloor+1$. So, $\cdim(W_n)=\lfloor\frac{2n+2}{5}\rfloor+1=\cdim_{W_n}(v)$ for any $v \in V(W_n)$.~\hfill
\end{proof}

Next, we consider a bouquet of cycles. Throughout this section, let $B_m$ be a bouquet of $m$ cycles (i.e., the vertex sum of $m$ cycles at one common vertex), where $m \ge 2$, and let $w$ be the cut-vertex of $B_m$. Let $C^1, C^2, \ldots, C^m$ be the $m$ cycles of $B_m$. For each $i \in \{1,2,\ldots, m\}$, let $P^i=C^i-w$ and let $N(w) \cap V(P^i)=\{u_{i,1}, u'_{i,1}\}$.

\begin{lemma}\emph{\cite{bouquet}}\label{lemma_bouquet}
Let $S$ be any resolving set of $B_m$. Then 
\begin{itemize}
\item[(a)] for each $i \in \{1,2,\ldots, m\}$, $|S \cap V(P^i)| \ge 1$; 
\item[(b)] for any two distinct even cycles $C^{i}$ and $C^{j}$ of $B_m$, $|S \cap (V(P^{i}) \cup V(P^{j}))| \ge 3$.
\end{itemize}
\end{lemma}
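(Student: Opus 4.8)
The plan is to exploit that the common vertex $w$ is the unique cut-vertex of $B_m$, so that $B_m-w$ is the disjoint union of the paths $P^1,\dots,P^m$. Consequently every path from a vertex $z\notin V(P^i)$ to a vertex $x\in V(P^i)$ must run through $w$, giving the factorization $d(z,x)=d(z,w)+d(w,x)$, where $d(w,x)$ is the distance measured inside the cycle $C^i$. I would treat $d(\cdot,w)$ as the principal invariant throughout: a vertex lying outside $V(P^i)$, as well as $w$ itself, ``sees'' a vertex of $P^i$ only through its cycle-distance to $w$. For part (a) I would then argue by contradiction: if $S\cap V(P^i)=\emptyset$, consider the two neighbours $u_{i,1},u'_{i,1}$ of $w$ on $C^i$, both at cycle-distance $1$ from $w$. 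Every $z\in S$ lies outside $V(P^i)$ (or equals $w$), so by the factorization $d(z,u_{i,1})=d(z,w)+1=d(z,u'_{i,1})$; hence no vertex of $S$ resolves $\{u_{i,1},u'_{i,1}\}$, a contradiction. Thus $|S\cap V(P^i)|\ge 1$.

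For part (b), suppose to the contrary that $|S\cap(V(P^i)\cup V(P^j))|\le 2$; by part (a) this forces $S\cap V(P^i)=\{a\}$ and $S\cap V(P^j)=\{b\}$, with every remaining vertex of $S$ lying outside $V(P^i)\cup V(P^j)$. Write $m_i$ (resp. $m_j$) for the antipode of $w$ on the even cycle $C^i$ (resp. $C^j$); evenness is exactly what makes this a single vertex. First I would rule out $a=m_i$: since $m_i$ is equidistant from $u_{i,1}$ and $u'_{i,1}$ and every other vertex of $S$ is external, the pair $\{u_{i,1},u'_{i,1}\}$ would be unresolved just as in (a), so $a\ne m_i$, and symmetrically $b\ne m_j$.

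The key structural fact I would then isolate is that for $a\in V(P^i)\setminus\{m_i\}$, exactly one of the two neighbours of $w$ on $C^i$ — call it $n_i$ — satisfies $d(a,n_i)=d(a,w)+1$ (the other satisfies $d(a,w)-1$). This holds because the vertices equidistant from the two neighbours form precisely the reflection axis $\{w,m_i\}$, so the non-axial vertex $a$ is not equidistant from them; and on an even cycle no vertex is equidistant from $w$ and an adjacent vertex, which rules out $d(a,n_i)=d(a,w)$ and leaves only the $\pm1$ possibilities. Choosing $n_i$ and $n_j$ this way, both are at cycle-distance $1$ from $w$, so $w$ and every external vertex $z$ give $d(z,n_i)=d(z,w)+1=d(z,n_j)$. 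For $a$ itself, $d(a,n_i)=d(a,w)+1$ by the choice of $n_i$, while $d(a,n_j)=d(a,w)+1$ by the factorization (as $n_j\notin V(P^i)$); hence $a$ fails to resolve $\{n_i,n_j\}$, and by symmetry so does $b$. Thus no vertex of $S$ resolves $\{n_i,n_j\}$, the desired contradiction.

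The main obstacle is pinpointing this ``double far-neighbour'' pair $\{n_i,n_j\}$ and checking that neither $a$ nor $b$ can separate it; the internal symmetric pairs of each cycle are harmless once $a\ne m_i$ and $b\ne m_j$, and the only genuinely unresolvable pairs are cross pairs at equal distance from $w$, so the whole difficulty is locating the right one. This step is exactly where evenness is indispensable: on an odd cycle a central vertex $a$ admits no neighbour of $w$ at distance $d(a,w)+1$, so it would resolve the analogous cross pairs — consistent with the fact that odd cycles each require only a single resolving vertex.
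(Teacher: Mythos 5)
Your proof is correct and complete. Note that the paper itself contains no proof of this lemma --- it is imported from~\cite{bouquet} --- so the only in-paper comparator is the proof of Lemma~\ref{lemma_cdim_bouquet}, the connected analogue. Your part (a) is exactly that argument: with $S\cap V(P^i)=\emptyset$, every landmark sees $P^i$ only through the cut-vertex $w$, so the two neighbours $u_{i,1},u'_{i,1}$ of $w$ on $C^i$ receive identical codes. For part (b) your route is genuinely more involved than the paper's connected version, and necessarily so: there, connectedness of $B_m[S_c]$ lets one normalize $S_c\cap V(P^i)=\{u_{i,1}\}$ and $S_c\cap V(P^j)=\{u_{j,1}\}$, whence the unresolved pair is immediately $\{u'_{i,1},u'_{j,1}\}$; in the general setting you must handle arbitrary single landmarks $a\in V(P^i)$ and $b\in V(P^j)$, and your two devices --- first excluding the antipodes $m_i,m_j$ via the part-(a) pair, then selecting the ``far'' neighbour $n_i$ of $w$ with $d(a,n_i)=d(a,w)+1$ --- handle this cleanly. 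The verification of your key structural fact is sound: intra-cycle distances in $B_m$ are realized inside the cycle (a path leaving $C^i$ would have to pass through $w$ twice), evenness gives bipartiteness so $d(a,n)\neq d(a,w)$ for $n\in N(w)\cap V(P^i)$, and the set of vertices of $C^i$ equidistant from $u_{i,1}$ and $u'_{i,1}$ is exactly $\{w,m_i\}$, forcing one neighbour at distance $d(a,w)-1$ and the other at $d(a,w)+1$ once $a\neq m_i$. The cross pair $\{n_i,n_j\}$ then receives the entry $d(z,w)+1$ from every $z\in S$, the desired contradiction; and your closing observation about odd cycles correctly identifies why both cycles must be even, consistent with Theorem~\ref{dim_bouquet}.
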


Based on Theorem~\ref{dim_bouquet} and Lemma~\ref{lemma_bouquet}, we have the following 

\begin{observation}\label{obs_bouquet}
Let $w$ be the cut-vertex of $B_m$. Then
\begin{itemize}
\item[(a)] for any minimum resolving set $S$ of $B_m$, $w \not\in S$;
\item[(b)] for any minimum connected resolving set $S_c$ of $B_m$, $w \in S_c$.
\end{itemize}
\end{observation}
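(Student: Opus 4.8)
The plan is to prove both parts of Observation~\ref{obs_bouquet} by combining the dimension formula from Theorem~\ref{dim_bouquet} with the counting inequalities in Lemma~\ref{lemma_bouquet}. Fix the bouquet $B_m$ with cut-vertex $w$ and cycles $C^1,\dots,C^m$, and recall the notation $P^i = C^i - w$. Throughout, the key structural fact I would exploit is that $w$ is the unique vertex lying on every cycle, so deleting $w$ separates $B_m$ into the $m$ paths $P^1,\dots,P^m$; consequently any subgraph of $B_m$ induced by a set $S \subseteq V(B_m)$ with $w \notin S$ is \emph{disconnected} as soon as $S$ meets at least two distinct $P^i$'s, since there is no edge between $V(P^i)$ and $V(P^j)$ for $i \neq j$.

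For part (a), I would argue by contradiction: suppose $S$ is a minimum resolving set of $B_m$ with $w \in S$. The goal is to produce a smaller resolving set, contradicting minimality. Here I would split on whether $B_m$ has any even cycles. Using Lemma~\ref{lemma_bouquet}(a), $S$ meets each $P^i$, and together with $w \in S$ this forces $|S| \ge m+1$ in the all-odd case; but Theorem~\ref{dim_bouquet} gives $\dim(B_m)=m$ when $x=0$, a contradiction. When $x \ge 1$ even cycles are present, I would similarly combine the per-cycle bound of Lemma~\ref{lemma_bouquet}(a), the stronger pairwise bound of Lemma~\ref{lemma_bouquet}(b) for even cycles, and the extra vertex $w$ to overshoot the value $m+x-1 = \dim(B_m)$ from Theorem~\ref{dim_bouquet}. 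The cleanest route is probably to show directly that some minimum resolving set avoids $w$: take any minimum resolving set and, if it contains $w$, argue that $w$ can be replaced by one of its neighbors $u_{i,1}$ (or simply dropped) while preserving resolvability, since $w$'s distance information is redundant given that each $P^i$ is already metered. The obstacle to watch is verifying that this replacement/deletion genuinely keeps the set resolving.

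For part (b), the argument is essentially the contrapositive of the disconnectedness observation above. Let $S_c$ be a minimum connected resolving set of $B_m$. By Lemma~\ref{lemma_bouquet}(a), $S_c \cap V(P^i) \neq \emptyset$ for every $i \in \{1,\dots,m\}$, and since $m \ge 2$ this means $S_c$ meets at least two distinct paths $P^i$ and $P^j$. If $w \notin S_c$, then by the separation property there is no path in $B_m[S_c]$ connecting a vertex of $P^i$ to a vertex of $P^j$, so $B_m[S_c]$ is disconnected, contradicting that $S_c$ is a \emph{connected} resolving set. Hence $w \in S_c$.

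The main obstacle I anticipate is part (a): the several-cases bookkeeping (odd versus even cycles, and the interaction with the two-case dimension formula of Theorem~\ref{dim_bouquet}) must be handled so that the counting bounds strictly exceed $\dim(B_m)$, and the replacement argument showing $w$ is dispensable in a minimum resolving set requires care to confirm that distances along each $P^i$ from a retained neighbor of $w$ still distinguish all pairs that $w$ previously separated. Part (b), by contrast, is immediate from the cut-vertex structure once Lemma~\ref{lemma_bouquet}(a) is invoked.
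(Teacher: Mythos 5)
Your primary route---deriving both parts from Theorem~\ref{dim_bouquet} and Lemma~\ref{lemma_bouquet}---is exactly the derivation the paper intends: the paper states the observation with no separate proof, introducing it as a consequence of precisely those two results. And the counting you sketch for (a) does close, so you should carry it out rather than leave it as "overshoot the value": by Lemma~\ref{lemma_bouquet}(a) each of the $m$ paths $P^i$ meets $S$, and by Lemma~\ref{lemma_bouquet}(b) at most one of the $x$ even-cycle paths can meet $S$ in a single vertex (two such paths would violate the pairwise bound $\ge 3$), so
$$|S \cap (V(B_m)-\{w\})| \;\ge\; (m-x) + \bigl(2(x-1)+1\bigr) \;=\; m+x-1 \quad \text{when } x \ge 1,$$
and $\ge m$ when $x=0$. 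In either case this already equals $\dim(B_m)$, so $w \in S$ would force $|S| \ge \dim(B_m)+1$, contradicting minimality---uniformly, with no case split beyond evaluating the dimension formula. Your part (b) argument is correct and complete: since every $P^i$--$P^j$ path passes through the cut-vertex $w$, and $S_c$ meets at least two of the $P^i$ by Lemma~\ref{lemma_bouquet}(a) (as $m \ge 2$), omitting $w$ disconnects $B_m[S_c]$.

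The one thing to discard is your fallback "cleanest route" for (a). Part (a) is a universal statement---\emph{every} minimum resolving set avoids $w$---whereas replacing $w$ by a neighbor in some minimum resolving set can only exhibit \emph{one} minimum resolving set avoiding $w$, which is strictly weaker than what is claimed. The "simply dropped" variant would yield the needed contradiction, but it rests on showing that $w$'s distance information is redundant in \emph{any} resolving set containing it, a claim you flag as delicate and do not prove. The counting argument makes that entire verification moot, so the replacement detour should be deleted rather than repaired.
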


\begin{lemma}\label{lemma_cdim_bouquet}
Let $S_c$ be any minimum connected resolving set of $B_m$. Then 
\begin{itemize}
\item[(a)] for each $i \in \{1,2,\ldots, m\}$, $|S_c \cap V(P^i)| \ge 1$; 
\item[(b)] for any two distinct cycles $C^{i}$ and $C^{j}$ of length at least four in $B_m$, $|S_c \cap (V(P^{i}) \cup V(P^{j}))| \ge 3$.
\end{itemize}
\end{lemma}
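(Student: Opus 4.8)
The plan is to handle the two parts separately: part (a) is essentially free, while part (b) carries all the content and is where the hypothesis ``length at least four'' does its work.

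For part (a), I would simply note that every connected resolving set is a resolving set; in particular $S_c$ is a resolving set of $B_m$, so Lemma~\ref{lemma_bouquet}(a) applies verbatim and gives $|S_c \cap V(P^i)| \ge 1$ for each $i$. No use of connectedness is needed here.

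For part (b), I would argue by contradiction. Suppose $C^i$ and $C^j$ both have length at least four but $|S_c \cap (V(P^i) \cup V(P^j))| \le 2$. By part (a) each of $V(P^i)$ and $V(P^j)$ already contributes at least one vertex, so this intersection is exactly $\{s_i, s_j\}$ with $s_i \in V(P^i)$ and $s_j \in V(P^j)$. Now I invoke Observation~\ref{obs_bouquet}(b) to get $w \in S_c$, and I use that $B_m[S_c]$ is connected: since $w$ is a cut-vertex, the only neighbors of a non-endpoint of $P^i$ again lie in $P^i$, so if $s_i$ were not adjacent to $w$ it would be isolated in $B_m[S_c]$ (as $|S_c|\ge 2$), contradicting connectedness. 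Hence $s_i \in \{u_{i,1}, u'_{i,1}\}$ and, naming the chosen one $u_{i,1}$, I may take $s_i = u_{i,1}$; likewise $s_j = u_{j,1}$.

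The heart of the argument is then to exhibit an unresolved pair, for which I would take $x = u'_{i,1}$ and $y = u'_{j,1}$, the two neighbors of $w$ not chosen into $S_c$; these are distinct, lie outside $S_c$, and differ from $w$. The key distance computations are $d(w,x) = d(w,y) = 1$; $d(u_{i,1}, x) = \min(2, n_i-2) = 2$ because $C^i$ has length $n_i \ge 4$; $d(u_{i,1}, y) = d(u_{i,1}, w) + d(w, y) = 2$ since every $u_{i,1}$--$y$ path passes through the cut-vertex $w$; and symmetrically $d(u_{j,1}, y) = 2 = d(u_{j,1}, x)$. Moreover, for any $z \in S_c$ lying on a cycle other than $C^i$ or $C^j$, both $x$ and $y$ are reached only through $w$, so $d(z,x) = d(z,w)+1 = d(z,y)$. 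Thus $\code_{S_c}(x) = \code_{S_c}(y)$, contradicting that $S_c$ resolves $x$ and $y$, and therefore $|S_c \cap (V(P^i) \cup V(P^j))| \ge 3$.

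The main obstacle, and the precise point where ``length at least four'' is indispensable, is the computation $d(u_{i,1}, u'_{i,1}) = 2$: if $C^i$ were a triangle then $u_{i,1}$ and $u'_{i,1}$ would be adjacent, giving $d(u_{i,1}, x) = 1 \ne 2$ and destroying the code collision (this is exactly the reason the weaker Lemma~\ref{lemma_bouquet}(b) was stated only for even cycles). So the delicate steps are to confirm that for both cycles of length $\ge 4$ the two neighbors of $w$ lie at distance exactly two, and to verify carefully that distances from $S_c$-vertices on other cycles all factor through $w$ and hence cannot separate $x$ from $y$ either.
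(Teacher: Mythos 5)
Your proof is correct and follows essentially the same route as the paper's: part (a) via the resolving-set property applied to the pair $u_{i,1}, u'_{i,1}$ (the paper proves this directly rather than citing Lemma~\ref{lemma_bouquet}(a), but the content is identical), and part (b) by contradiction, using $w \in S_c$ and connectedness of $B_m[S_c]$ to force $S_c \cap V(P^i)=\{u_{i,1}\}$ and $S_c \cap V(P^j)=\{u_{j,1}\}$, then exhibiting the unresolved pair $u'_{i,1}, u'_{j,1}$. The paper merely asserts the code equality $\code_{S_c}(u'_{i,1})=\code_{S_c}(u'_{j,1})$, whereas you verify all the distance computations explicitly (correctly pinpointing where length at least four is used); this is the same argument, just with more detail.
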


\begin{proof}
(a) If $|S_c \cap V(P^i)|=0$ for some $i \in \{1,2,\ldots, m\}$, then $\code_{S_c}(u_{i,1})=\code_{S_c}(u'_{i,1})$, contradicting the assumption that $S_c$ is a resolving set of $B_m$. So, $|S_c \cap V(P^i)| \ge 1$ for each $i \in \{1,2,\ldots, m\}$.

(b) Let $C^i$ and $C^j$ be two distinct cycles of length at least four in $B_m$. Assume, to the contrary, that $|S_c \cap (V(P^{i}) \cup V(P^{j}))| \le 2$. By (a) of the current lemma, we have $|S_c \cap V(P^i)|=1$ and $|S_c \cap V(P^j)|=1$. Notice $w \in S_c$. By the connectedness of $B_m[S_c]$, we may assume, without loss of generality, that $S_c \cap V(P^i)=\{u_{i,1}\}$ and $S_c \cap V(P^j)=\{u_{j,1}\}$. Then $\code_{S_c}(u'_{i,1})=\code_{S_c}(u'_{j,1})$ contradicting the assumption that $S_c$ is a resolving set of $B_m$. Thus, $|S_c \cap (V(P^{i}) \cup V(P^{j}))| \ge 3$.~\hfill
\end{proof}

\begin{proposition}\label{cdim_bouquet}
Let $b$ be the number of the cycles isomorphic to $C_3$ in $B_m$. Then 
\begin{equation*}
\cdim(B_m)=\left\{
\begin{array}{ll}
m+1 & \mbox{ if } b=m,\\
2m-b & \mbox{ if } b \neq m. 
\end{array}\right.
\end{equation*}
\end{proposition}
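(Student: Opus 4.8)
The plan is to sandwich $\cdim(B_m)$ between matching lower and upper bounds. For the lower bound, let $S_c$ be any minimum connected resolving set. By Observation~\ref{obs_bouquet}(b) we have $w \in S_c$, and since the sets $V(P^1), \ldots, V(P^m)$ partition $V(B_m) \setminus \{w\}$, we may write $|S_c| = 1 + \sum_{i=1}^{m} a_i$ where $a_i = |S_c \cap V(P^i)|$. Lemma~\ref{lemma_cdim_bouquet}(a) gives $a_i \ge 1$ for each $i$, and Lemma~\ref{lemma_cdim_bouquet}(b) gives $a_i + a_j \ge 3$ whenever $C^i$ and $C^j$ both have length at least $4$. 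If $b = m$ (all cycles are $C_3$), these immediately yield $|S_c| \ge 1 + m$. If $b < m$, I observe that among the $m-b$ cycles of length at least $4$ at most one can have $a_i = 1$, since two such cycles would violate the pairwise bound; hence their total contribution is at least $2(m-b)-1$, and adding $b$ for the triangles gives $\sum_i a_i \ge 2m - b - 1$, so $|S_c| \ge 2m - b$.

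For the matching upper bound I will exhibit an explicit connected resolving set $S$. Put $w \in S$ and, from every cycle $C^i$, select $u_{i,1} \in N(w)$. When $b < m$, distinguish one cycle of length at least $4$, say $C^m$, and from each \emph{other} cycle of length at least $4$ additionally select the second neighbor $u'_{i,1} \in N(w)$. Then $|S| = 1 + m + (m - b - 1) = 2m - b$ when $b < m$, and $|S| = 1 + m = m+1$ when $b = m$. Since every chosen vertex lies in $N(w)$, the subgraph $B_m[S]$ is a star and is therefore connected, so it remains only to verify that $S$ resolves $B_m$.

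The heart of the argument is a genericity dichotomy. Call a vertex $x \in V(P^i)$ \emph{generic} if $d(x,\ell) = d(x,w) + d(w,\ell)$ for every $\ell \in S$; because any path between distinct cycles passes through $w$, this is automatic for landmarks outside $C^i$, so genericity only constrains the landmarks of $S$ inside $C^i$. First I will note intra-cycle resolution: each cycle contains the adjacent pair $w, u_{i,1}$ of $S$, and since two adjacent vertices resolve a cycle, no two vertices of the same cycle share a code. For a cross-cycle pair $x \in C^i$, $y \in C^j$ with $d(x,w) = d(y,w) = t$ (if these differ, $w$ separates them), the key point is that if $x$ is non-generic, witnessed by some $\ell^* \in S \cap V(C^i)$ with $d(x,\ell^*) < t + d(w,\ell^*)$, then $d(y,\ell^*) = t + d(w,\ell^*) > d(x,\ell^*)$, so $\ell^*$ separates $x$ from $y$; thus a non-generic vertex never collides with a vertex of another cycle. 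It then suffices to control the generic vertices, and here the triangle/large-cycle dichotomy appears: a short distance computation shows that $u'_{i,1}$ in a triangle is non-generic (being adjacent to $u_{i,1}$, it is at distance $1 \neq 2$ from it), and that placing both neighbors $u_{i,1}, u'_{i,1}$ of $w$ in a cycle of length at least $4$ leaves that cycle with no generic vertex. Consequently the only generic vertices are those of the distinguished cycle $C^m$ lying beyond $u'_{m,1}$, and at each distance $t$ from $w$ there is at most one such vertex (the near-side and antipodal vertices being non-generic). Hence any two generic vertices lie at distinct distances from $w$ and are separated by $w$, while a generic vertex is separated from any equidistant non-generic vertex by the latter's witness landmark.

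I expect the main obstacle to be precisely this genericity bookkeeping in the distinguished cycle: one must verify carefully that at every distance $t$ at most one generic vertex occurs, and that the near-side and (in an even cycle) antipodal vertices are genuinely non-generic, so that the single cycle carrying generic vertices never produces a collision. Once these direct distance calculations are in place, all cases are exhausted, $S$ is confirmed to be a connected resolving set, and the lower and upper bounds coincide, giving $\cdim(B_m) = m+1$ if $b = m$ and $\cdim(B_m) = 2m - b$ if $b \neq m$.
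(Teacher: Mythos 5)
Your proposal is correct and takes essentially the same route as the paper: the identical lower bound (Observation~\ref{obs_bouquet}(b) for $w \in S_c$ plus Lemma~\ref{lemma_cdim_bouquet}, with the count $1 + b + 2(m-b) - 1 = 2m-b$) and the identical witness set $\{w\} \cup (\cup_{i=1}^{m}\{u_{i,1}\}) \cup (\cup_{i=b+1}^{m-1}\{u'_{i,1}\})$ for the upper bound. Your ``genericity'' dichotomy is a repackaging of the paper's direct case analysis---your non-genericity witnesses (the triangle landmark at distance $\le 1 < 2$, and the marked neighbor of $w$ lying on the $w$--$x$ geodesic when both $u_{i,1}, u'_{i,1} \in S$) are precisely the resolving landmarks the paper exhibits, so the two verifications carry out the same distance computations.
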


\begin{proof}
Let $w$ be the cut-vertex of $B_m$ for $m \ge 2$. Let $C^1, C^2, \ldots, C^b$ be the cycles isomorphic to $C_3$, and let $C^{b+1}, C^{b+2}, \ldots, C^m$ be the cycles of length at least four in $B_m$; let $b=0$ if each cycle of $B_m$ is of length at least four, and let $b=m$ if each cycle of $B_m$ is isomorphic to $C_3$.\\

\textbf{Case 1: $b=m$.} By Observation~\ref{obs_bouquet}(b) and Lemma~\ref{lemma_cdim_bouquet}(a), $\cdim(B_m) \ge m+1$. Since $\{w\} \cup (\cup_{i=1}^{m}\{u_{i,1}\})$ forms a connected resolving set of $B_m$, $\cdim(B_m) \le m+1$. Thus, $\cdim(B_m)=m+1$.\\

\textbf{Case 2: $b<m$.} First, we make the following\\

\textbf{Claim.} $R=(\cup_{i=1}^{b}\{u_{i,1}\}) \cup (\cup_{i=b+1}^{m-1}\{u_{i,1}, u'_{i,1}\}) \cup \{w,u_{m,1}\}$ forms a connected resolving set of $B_m$ with $|R|=2m-b$; thus $\cdim(B_m) \le 2m-b$.\\

\textit{Proof.} Since $B_m[R]$ is connected, it suffices to show that $R$ is a resolving set of $B_m$. Let $x$ and $y$ be two distinct vertices in $B_m$. If $x, y \in V(C^i)$ for some $i\in\{1,2,\ldots, m\}$, then $R \cap V(C^i)$ resolves $x$ and $y$, since $R \cap V(C^i)$ contains two adjacent vertices of $C^i$. So, suppose that $x \in V(P^i)$ and $y \in V(P^j)$ for $i \neq j$. If $d(w, x) \neq d(w, y)$, then $w \in R$ resolves $x$ and $y$. So, let $d(w,x)=d(w,y)$. If $C^i$ or $C^j$ is isomorphic to $C_3$, say the former, then $d(u_{i,1}, x) \le 1 < 2 \le d(u_{i,1}, y)$. If neither $C^i$ nor $C^j$ is isomorphic to $C_3$, then $|R \cap V(P^i)|=2$ or $|R \cap V(P^j)|=2$, say the former, i.e., $R \cap V(P^i)=\{u_{i, 1}, u'_{i,1}\}$. We may assume that $u_{i,1}$ lies on the $w-x$ geodesic by a relabeling of the vertices if necessary. Then $d(u_{i,1}, x)=d(w,x)-1<d(w,x)+1=d(u_{i,1},y)$.~$\Box$\\

Second, we will show that $\cdim(B_m) \ge 2m-b$. If $b=m-1$, then $\cdim(B_m) \ge m+1=2m-b$ by Observation~\ref{obs_bouquet}(b) and Lemma~\ref{lemma_cdim_bouquet}(a). If $b\leq m-2$, i.e., there are at least two cycles of length at least four in $B_m$, then $\cdim(B_m) \ge b+2(m-1-b)+1+1=2m-b$ by Observation~\ref{obs_bouquet}(b) and Lemma~\ref{lemma_cdim_bouquet}.~\hfill
\end{proof}

If every cycle in $B_m$ is isomorphic to $C_3$, then $N[w]=V(B_m)$. Also, for $x \in V(P^i)$ with $d(x,w) <\diam(C^i)$, if $y \in N(w)$ and $y$ does not lie on a $w-x$ geodesic, then $d(x,y)=d(x,w)+1$. These observations, together with the proof of Proposition~\ref{cdim_bouquet}, yield the following

\begin{corollary}\label{cdim_v}
Let $w$ be the cut-vertex of $B_m$ for $m \ge 2$, and let $b$ be the number of the cycles isomorphic to $C_3$ in $B_m$. For each $j \in\{1,2,\ldots,m\}$, let $\Gamma_j=V(C^j)-N[w]$ and $D_j=\{v\in V(C^j): d(v,w)=\diam(C^j)\}$. Let $v \in V(C^i) \subset V(B_m)$ for some $i \in \{1,2,\ldots,m\}$. Then
\begin{equation*}
\cdim_{B_m}(v)=\left\{
\begin{array}{cl}
\cdim(B_m) & \mbox{ if } v \in N[w],\\
\cdim(B_m)+d(v,w)-1 & \mbox{ if } v\in \Gamma_i \mbox{ and } b=m-1, \mbox{ or } v\in \Gamma_i-D_i \mbox{ and } b \le m-2,\\
\cdim(B_m)+d(v,w)-2 & \mbox{ if } v\in \Gamma_i \cap D_i \mbox{ and } b \le m-2.
\end{array}
\right.
\end{equation*}
\end{corollary}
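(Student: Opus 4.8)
The plan is to establish the formula for $\cdim_{B_m}(v)$ by reducing to the known value of $\cdim(B_m)$ from Proposition~\ref{cdim_bouquet} and carefully tracking how much the connected resolving set must grow when the required vertex $v$ lies far from the cut-vertex $w$. The structural backbone is Observation~\ref{obs_bouquet}(b), which guarantees $w$ belongs to every minimum connected resolving set, so I would exploit the fact that any connected resolving set containing $v \in V(C^i)$ must contain a connected ``stem'' reaching from $w$ out to $v$ along the cycle $C^i$.

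First I would dispose of the case $v \in N[w]$. Here the goal is $\cdim_{B_m}(v)=\cdim(B_m)$, which by Observation~\ref{r=dim} amounts to exhibiting a \emph{minimum} connected resolving set containing $v$. The natural candidates are the set $R$ constructed in the proof of Proposition~\ref{cdim_bouquet} together with its relabelings: since $R$ already contains $w$, $u_{i,1}$, and (for long cycles) $u'_{i,1}$, essentially every vertex of $N[w]$ can be forced into some minimum connected resolving set by symmetry of the two neighbors $u_{i,1}, u'_{i,1}$ of $w$ on each $P^i$. I would verify that for each $v \in N[w]$ one such relabeling of $R$ contains $v$, giving the equality.

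Next I would treat the far cases, $v \in \Gamma_i = V(C^i)-N[w]$, splitting according to whether $v$ attains the diameter distance (i.e.\ $v \in D_i$) and according to the value of $b$. The upper bound in each case comes from taking a minimum connected resolving set $S$ guaranteed to contain the appropriate neighbor of $w$ on the $w$--$v$ geodesic (here the flexibility provided by the two endpoints $\{u_{i,1},u'_{i,1}\}$ of $P^i$ in $R$ matters), and adjoining the interior vertices of the $w$--$v$ path; the count of added vertices is $d(v,w)-1$ in general, but drops to $d(v,w)-2$ precisely when $v \in D_i$, because in an even cycle the diametrically opposite vertex is reached along \emph{two} geodesics and one already has a neighbor of $w$ on the far side as part of the resolving set, so two interior vertices near $w$ are already ``paid for.'' The lower bound is the dual: any connected resolving set $R'$ containing $v$ must, by connectedness and $w \in R'$ (Observation~\ref{obs_bouquet}(b)), contain a full $w$--$v$ subpath, and intersecting $R'$ with the remaining structure forces $|R' \cap V(B_m)| \ge \cdim(B_m) + (\text{distance surplus})$, using Lemma~\ref{lemma_cdim_bouquet} to recover the baseline $\cdim(B_m)$ on the other cycles.

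The main obstacle I anticipate is getting the $-2$ versus $-1$ bookkeeping exactly right in the $v \in \Gamma_i \cap D_i$, $b \le m-2$ case. This requires a genuinely careful analysis of how a geodesic-based stem to a diametral vertex of an even cycle overlaps with the two mandatory vertices $\{u_{i,1}, u'_{i,1}\}$ that Lemma~\ref{lemma_cdim_bouquet}(b) forces on long cycles: on an even cycle $C^i$ the antipode $v$ lies at distance $\diam(C^i)$ from $w$ by two symmetric arcs, so the connected stem from $w$ can be chosen to pass through a vertex already required in a minimum set, saving one unit beyond the generic saving. Establishing that this saving is available exactly when $v \in D_i$ and $b \le m-2$ (and not when $b=m-1$, where only one long cycle exists and the surplus is merely $d(v,w)-1$) is where the parity of the cycle and the counting from Proposition~\ref{cdim_bouquet} must be reconciled. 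Once the upper and lower bounds are matched in each row of the piecewise formula, the result follows.
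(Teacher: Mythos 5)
Your treatment of the $v \in N[w]$ case is fine and matches what the corollary needs (relabelings of the set $R$ from Proposition~\ref{cdim_bouquet} put every vertex of $N[w]$ into some minimum connected resolving set). But the heart of the corollary is the $-1$ versus $-2$ dichotomy, and there your proposed mechanism is wrong. You attribute the saving in the $v\in \Gamma_i\cap D_i$, $b\le m-2$ row to overlap between the stem and vertices already in the resolving set on $C^i$ (``two interior vertices near $w$ are already paid for''). This cannot work arithmetically: your construction is purely additive, and any set obtained by adjoining the $w$--$v$ stem to a minimum connected resolving set of size $\cdim(B_m)$ has size at least $\cdim(B_m)+d(v,w)-1$, since the stem along one arc still requires $d(v,w)-2$ interior vertices beyond a neighbor of $w$ plus $v$ itself, regardless of whether the far-side neighbor $u'_{i,1}$ is already present -- a vertex on the opposite arc does not shorten the connected path you must lay down. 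The actual saving lives on a \emph{different} cycle: one builds a set containing the full arc of $C^i$ from $w$ to $v$ (that is $d(v,w)$ vertices of $P^i$) while downgrading one other long cycle $C^k$ from two chosen neighbors of $w$ to just one, i.e., starting from a base that is \emph{not} itself a resolving set (it violates the pair condition of Lemma~\ref{lemma_cdim_bouquet}(b) for the pair $(C^i,C^k)$ absent the stem). This downgrade is legitimate exactly when $v\in D_i$, because then $v$ itself resolves the otherwise-tied pair $u'_{i,1}$ (distance $d(v,w)-1$, or $d(v,w)$ for odd $C^i$, along the far arc) versus $u'_{k,1}$ (distance $d(v,w)+1$ through $w$). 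Note also that your ``even cycle, two geodesics'' framing misses that $D_i$ is nonempty for odd cycles, where the geodesic to $v$ is unique yet the $-2$ value still holds by the same distance comparison.

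The dual failure appears in your lower bound. Summing the constraints (the vertex $w$, one vertex per triangle, the stem of $d(v,w)$ vertices in $P^i$, and $2\ell-1$ vertices over the $\ell$ remaining long cycles) gives only $\cdim(B_m)+d(v,w)-2$ in \emph{every} case with $b\le m-2$, because the stem automatically satisfies the pair condition for any pair involving $C^i$. So for $v\in\Gamma_i-D_i$ your sketch stops one short of the claimed value: you must additionally show that a set of size $\cdim(B_m)+d(v,w)-2$ is impossible when $v\notin D_i$, precisely by exhibiting the unresolved pair $u'_{i,1},u'_{k,1}$ when the stem is the shorter arc and some other long cycle retains only one chosen neighbor. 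Your proposal never rules this configuration out, and indeed cannot, since its accounting places the critical unit on the wrong cycle. (A minor repair elsewhere: Observation~\ref{obs_bouquet}(b) as stated applies to minimum connected resolving sets of $B_m$, not to connected resolving sets at $v$; the fact that $w$ belongs to every connected resolving set at $v$ needs the one-line argument that such a set must meet every $P^j$ and hence, by connectedness across the cut-vertex, contain $w$.)
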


\begin{remark}\label{non_transitive}
There exists a non vertex-transitive graph that satisfies $\cdim_G(v)=\cdim(G)$ for any vertex $v \in V(G)$. Let $B_m$ be the bouquet of $m$ cycles such that each cycle is isomorphic to $C_3$ (see Figure~\ref{B4} for $B_4$ with $C^i=C_3$ for each $i \in \{1,2,3,4\}$). By Corollary~\ref{cdim_v}, $\cdim_{B_m}(v)=\cdim(B_m)$ for any $v \in V(B_m)$.
\end{remark}

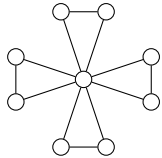
\begin{figure}[ht]
\centering
\begin{tikzpicture}[scale=.6, transform shape]

\node [draw, shape=circle] (0) at  (0,0) {};
\node [draw, shape=circle] (1) at  (-0.5,1.5) {};
\node [draw, shape=circle] (2) at  (0.5,1.5) {};
\node [draw, shape=circle] (3) at  (1.5,0.5) {};
\node [draw, shape=circle] (4) at  (1.5,-0.5) {};
\node [draw, shape=circle] (5) at  (-0.5,-1.5) {};
\node [draw, shape=circle] (6) at  (0.5,-1.5) {};
\node [draw, shape=circle] (7) at  (-1.5,0.5) {};
\node [draw, shape=circle] (8) at  (-1.5,-0.5) {};

\draw(0)--(1)--(2)--(0);\draw(0)--(3)--(4)--(0);
\draw(0)--(5)--(6)--(0);\draw(0)--(7)--(8)--(0);

\end{tikzpicture}
\caption{$B_4$ with $C^i=C_3$ for each $i \in \{1,2,3,4\}$.}\label{B4}
\end{figure}

\begin{remark}\label{cdim_dim}
For each integer $k \ge 1$, there exists a graph $G$ such that $\cdim(G)-\dim(G)=k$. Let $G$ be a bouquet of $k+1$ cycles consisting of one cycle that is isomorphic to $C_3$ and $k$ odd cycles of length at least five. Then $\cdim(G)=2k+1$ by Proposition~\ref{cdim_bouquet} and $\dim(G)=k+1$ by Theorem~\ref{dim_bouquet}. So, $\cdim(G)-\dim(G)=k$, which can be arbitrarily large as $k \rightarrow \infty$. 
\end{remark}

Next, we consider complete multi-partite graphs.

\begin{proposition}\label{cdim_multipartite}
For $k \ge 2$, let $G=K_{a_1, a_2, \ldots, a_k}$ be a complete $k$-partite graph of order $n=\sum_{i=1}^{k}a_i\geq 4$, and let $s$ be the number of partite sets of $G$ consisting of exactly one element. Then
\begin{itemize}
\item[(a)]
$\cdim(G)=\left\{
\begin{array}{ll}
n-1 & \mbox{ if } s=1 \mbox{ and } k=2,\\
\dim(G)=n-k & \mbox{ if } s=0, \\
\dim(G)=n+s-k-1  & \mbox{ otherwise};
\end{array}
\right.
$
\item[(b)] let $V(G)$ be partitioned by $V_1, V_2, \ldots, V_k$ such that $|V_i|=a_i$, $1\leq i \leq k$, and $a_1 \le a_2 \le \ldots \le a_k$; then, for $v \in V(G)$,
$$\cdim_G(v)=\left\{
\begin{array}{ll}
\cdim(G)+1 & \mbox{ if } s=1, k \ge 3, \mbox{ and } v \in V_1,\\
\cdim(G) & \mbox{ otherwise}.
\end{array}
\right.
$$
\end{itemize}
\end{proposition}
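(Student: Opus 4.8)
The plan is to combine three structural facts about a complete multi-partite graph $G=K_{a_1,\dots,a_k}$ with the value of $\dim(G)$ supplied by Theorem~\ref{dim_multipartite}. First I would record the metric structure: if $G$ is not complete then $d_G(x,y)=1$ when $x,y$ lie in different parts and $d_G(x,y)=2$ when they lie in one part, while $G=K_n$ has diameter $1$. Second, within each part $V_i$ the vertices are pairwise twins, so any resolving set retains at least $a_i-1$ of them; moreover two singleton-vertices are resolved only by one of the two (every other vertex is at distance $1$ from both), so any resolving set omits at most one of the $s$ singletons. Third, the connectivity criterion: $G[S]$ is connected if and only if $|S|=1$ or $S$ meets at least two parts, because vertices in distinct parts are adjacent and two vertices of a common part are joined through any vertex of a third represented part.

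For part (a) I would form the canonical set $S$ that keeps exactly $a_i-1$ vertices of each non-singleton part and all but one of the singletons. The two lower bounds above together with Theorem~\ref{dim_multipartite} give $|S|=\dim(G)$, and a direct check shows $S$ is resolving, so $S$ is a minimum resolving set. The decisive point is that $S$ meets exactly $k-1$ parts. Hence when $s=0$, and in the ``otherwise'' case (where $n\ge 4$ together with $s\neq 0$ and the exclusion of $(s,k)=(1,2)$ forces $k\ge 3$), we have $k-1\ge 2$, so $S$ is connected and $\cdim(G)=\dim(G)$ by Observation~\ref{obs}(c). The remaining case $s=1,k=2$ is exactly $G=K_{1,n-1}$, for which $\cdim(G)=n-1$ by the corollary following Theorem~\ref{cdim_characterization}: there every minimum resolving set sits inside the single large part, and disconnectedness forces an extra (central) vertex.

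For part (b) I would show that for every $v$ except the distinguished singleton a minimum connected resolving set can be chosen through $v$. In the cases where $\cdim(G)=\dim(G)$ (namely $s=0$, or $s\ge 2$, or $s=1,k\ge 3$ with $v\notin V_1$), I adapt the canonical set so that $v$ is retained, taking $v$ among the $a_i-1$ kept vertices of its part if $v$ lies in a non-singleton part, or keeping $v$ and discarding a different singleton if $v$ is a singleton and $s\ge 2$, obtaining a minimum resolving set through $v$ that still meets at least two parts, and conclude by Observation~\ref{r=dim}. The star case $s=1,k=2$ is immediate from Observation~\ref{obs}(b), since there $\cdim(G)=n-1$ already equals the upper bound. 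The genuinely different case is $s=1$, $k\ge 3$, $v\in V_1$: here $\sum_{i\ge 2}(a_i-1)=n-k=\dim(G)=\cdim(G)$ shows the entire budget is consumed by the non-singleton parts, so every minimum resolving set avoids $v$; any resolving set containing $v$ thus has size at least $n-k+1$, while $\{v\}$ together with $a_i-1$ vertices from each non-singleton part is a connected resolving set of exactly this size, giving $\cdim_G(v)=\cdim(G)+1$.

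The main obstacle I anticipate is bookkeeping rather than conceptual: the entire dichotomy between $\cdim=\dim$ and $\cdim=\dim+1$ (and likewise for $\cdim_G(v)$) hinges on whether the candidate set meets one part or at least two, so I would be careful to count, across the overlapping hypotheses on $s$ and $k$, exactly how many distinct parts are represented. In particular I would use $n\ge 4$ to rule out $K_2$ and to see that $s\neq 0$ with the star excluded forces $k\ge 3$, which is precisely what pushes the part-count from $k-1=1$ up to $k-1\ge 2$.
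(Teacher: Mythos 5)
Your proposal is correct and follows essentially the same route as the paper: the same canonical minimum resolving sets (all but one vertex of each non-singleton part, plus all but one singleton), connectivity read off from which parts the set meets, the star $s=1$, $k=2$ dispatched separately, and for part (b) the set re-chosen through $v$ and Observation~\ref{r=dim} invoked in all non-exceptional cases (the paper achieves the same thing by noting that vertices within a part are equivalent under automorphisms). In the exceptional case $s=1$, $k\ge 3$, $v\in V_1$, your twin-counting ``budget'' argument, giving $|S|\ge n-k+1$ for \emph{any} resolving set containing $v$, is the same counting the paper uses to conclude that every minimum connected resolving set avoids $V_1$, paired with the same size-$(n-k+1)$ connected set for the upper bound. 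Two minor remarks. First, a bookkeeping slip of exactly the kind you anticipated: your blanket claim that the canonical set ``meets exactly $k-1$ parts'' is false when $s=0$, where it meets all $k$ parts, and consequently the asserted ``$k-1\ge 2$'' fails for $s=0$, $k=2$ (e.g., $K_{2,2}$); the conclusion survives because your own connectivity criterion needs only that the set meet at least two parts, which it does since $k\ge 2$. Second, purely cosmetic differences: the paper handles $s=1$, $k=2$ via Lemma~\ref{lemma_tree1} (the star is a tree with one exterior major vertex) where you invoke the corollary to Theorem~\ref{cdim_characterization}, and your sandwich $\cdim(G)\le \cdim_G(v)\le n-1$ from Observation~\ref{obs}(b) for the star in part (b) is if anything cleaner than the paper's symmetry argument; both are valid.
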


\begin{proof}
Let $V(G)$ be partitioned according to part (b) of the statement, and let $V_i=\{w_{i,1}, w_{i,2}, \ldots, w_{i,a_i}\}$.\\ 

(a) First, let $s=0$; then $a_i \ge 2$ for each $i \in \{1,2,\ldots, k\}$. Notice $S=\cup_{i=1}^{k}(V_i-\{w_{i,1}\})$ forms a connected resolving set of $G$. Since $|S|=n-k=\dim(G)$ by Theorem~\ref{dim_multipartite}, it follows by Observation~\ref{obs}(a) that $\cdim(G)=\dim(G)$.

Second, if $s=1$ and $k=2$, then $G$ is a tree with $ex(G)=1$, since $n\geq 4$. Thus, we have $\cdim(G)=n-1$ by Lemma~\ref{lemma_tree1}. If $s=1$ and $k \ge 3$, then $S=\cup_{i=1}^{k}(V_i-\{w_{i,1}\})=\cup_{i=2}^{k}(V_i-\{w_{i,1}\})$ forms a connected resolving set of $G$. Since $|S|=n-k=\dim(G)$ by Theorem~\ref{dim_multipartite}, it follows by Observation~\ref{obs}(a) that $\cdim(G)=\dim(G)$. 

Third, let $s \ge 2$. Then, $S=(\cup_{i=2}^{s}V_i) \cup (\cup_{j=s+1}^{k}(V_j-\{w_{j,1}\}))$ forms a connected resolving set of $G$. Since $|S|=(s-1)+\sum_{j=s+1}^{k}(a_j-1)=(s-1)+\sum_{j=1}^{k}(a_j-1)=(s-1)+(n-k)=\dim(G)$ by Theorem~\ref{dim_multipartite}, it follows by Observation~\ref{obs}(a) that $\cdim(G)=\dim(G)$.\\

(b) Let $v \in  V(G)$. Let $S$ be an arbitrary minimum connected resolving set of $G$. Notice that all vertices of each $V_i$ are equivalent with respect to inclusion in $S$ by virtue of the actions of the automorphism group on $G$. 

First, let $s=1$ and $k=2$, or $s=0$. Then $S \cap V_i \neq \emptyset$ for each $i$, and we may assume that $v\in S$.  Thus, $\cdim_G(v)=|S|=\cdim(G)$.

Second, let $s=1$ and $k \ge 3$. Notice, in this case, $S\cap V_1=\emptyset$ and $S\cap V_i\neq \emptyset$ for $i>1$. Thus, if $v\notin V_1$, then $\cdim_G(v)=\cdim(G)$. Since $G[S\cup V_1]$ is connected, $\cdim_G(v)=\cdim(G)+1$ if $v\in V_1$. 

Third, let $s \ge 2$. In this case, all vertices in $\cup_{i=1}^{s} V_i$ are equivalent with respect to inclusion in $S$ by virtue of the actions of the automorphism group on $G$. Further, we have $S \cap (\cup_{i=1}^{s} V_i) \neq\emptyset$ and $S\cap V_j \neq \emptyset$ for each $j>s$. Thus $\cdim_G(v)=|S|=\cdim(G)$.~\hfill 
\end{proof}

Next, we consider grid graphs (the Cartesian product of two paths). The \emph{Cartesian product} of two graphs $G$ and $H$, denoted by $G \square H$, is the graph with the vertex set $V(G) \times V(H)$ such that $(u,w)$ is adjacent to $(u',w')$ if and only if either $u=u'$ and $ww' \in E(H)$, or $w=w'$ and $uu' \in E(G)$. See Figure~\ref{grid74} for the labeling of $P_7 \square P_4$.

\begin{figure}[ht]
\centering
\begin{tikzpicture}[scale=.6, transform shape]

\node [draw, shape=circle] (4) at  (0,6) {};
\node [draw, shape=circle] (3) at  (0,4) {};
\node [draw, shape=circle] (2) at  (0,2) {};
\node [draw, shape=circle] (1) at  (0,0) {};
\node [draw, shape=circle] (44) at  (2,6) {};
\node [draw, shape=circle] (33) at  (2,4) {};
\node [draw, shape=circle] (22) at  (2,2) {};
\node [draw, shape=circle] (11) at  (2,0) {};
\node [draw, shape=circle] (444) at  (4,6) {};
\node [draw, shape=circle] (333) at  (4,4) {};
\node [draw, shape=circle] (222) at  (4,2) {};
\node [draw, shape=circle] (111) at  (4,0) {};
\node [draw, shape=circle] (4444) at  (6,6) {};
\node [draw, shape=circle] (3333) at  (6,4) {};
\node [draw, shape=circle] (2222) at  (6,2) {};
\node [draw, shape=circle] (1111) at  (6,0) {};
\node [draw, shape=circle] (44444) at  (8,6) {};
\node [draw, shape=circle] (33333) at  (8,4) {};
\node [draw, shape=circle] (22222) at  (8,2) {};
\node [draw, shape=circle] (11111) at  (8,0) {};
\node [draw, shape=circle] (444444) at  (10,6) {};
\node [draw, shape=circle] (333333) at  (10,4) {};
\node [draw, shape=circle] (222222) at  (10,2) {};
\node [draw, shape=circle] (111111) at  (10,0) {};
\node [draw, shape=circle] (4444444) at  (12,6) {};
\node [draw, shape=circle] (3333333) at  (12,4) {};
\node [draw, shape=circle] (2222222) at  (12,2) {};
\node [draw, shape=circle] (1111111) at  (12,0) {};

\draw(1)--(2)--(3)--(4);
\draw(11)--(22)--(33)--(44);
\draw(111)--(222)--(333)--(444);
\draw(1111)--(2222)--(3333)--(4444);
\draw(11111)--(22222)--(33333)--(44444);
\draw(111111)--(222222)--(333333)--(444444);
\draw(1111111)--(2222222)--(3333333)--(4444444);
\draw(1)--(11)--(111)--(1111)--(11111)--(111111)--(1111111);
\draw(2)--(22)--(222)--(2222)--(22222)--(222222)--(2222222);
\draw(3)--(33)--(333)--(3333)--(33333)--(333333)--(3333333);
\draw(4)--(44)--(444)--(4444)--(44444)--(444444)--(4444444);

\node [scale=1.2] at (-1,0) {$(u_1, w_1)$};
\node [scale=1.2] at (-1,2) {$(u_1, w_2)$};
\node [scale=1.2] at (-1,4) {$(u_1, w_3)$};
\node [scale=1.2] at (-1,6) {$(u_1, w_4)$};

\node [scale=1.2] at (2,-0.4) {$(u_2, w_1)$};
\node [scale=1.2] at (2.8,2.4) {$(u_2, w_2)$};
\node [scale=1.2] at (2.8,4.4) {$(u_2, w_3)$};
\node [scale=1.2] at (2,6.5) {$(u_2, w_4)$};

\node [scale=1.2] at (4,-0.4) {$(u_3, w_1)$};
\node [scale=1.2] at (4.8,2.4) {$(u_3, w_2)$};
\node [scale=1.2] at (4.8,4.4) {$(u_3, w_3)$};
\node [scale=1.2] at (4,6.5) {$(u_3, w_4)$};

\node [scale=1.2] at (6,-0.4) {$(u_4, w_1)$};
\node [scale=1.2] at (6.8,2.4) {$(u_4, w_2)$};
\node [scale=1.2] at (6.8,4.4) {$(u_4, w_3)$};
\node [scale=1.2] at (6,6.5) {$(u_4, w_4)$};

\node [scale=1.2] at (8,-0.4) {$(u_5, w_1)$};
\node [scale=1.2] at (8.8,2.4) {$(u_5, w_2)$};
\node [scale=1.2] at (8.8,4.4) {$(u_5, w_3)$};
\node [scale=1.2] at (8,6.5) {$(u_5, w_4)$};

\node [scale=1.2] at (10,-0.4) {$(u_6, w_1)$};
\node [scale=1.2] at (10.8,2.4) {$(u_6, w_2)$};
\node [scale=1.2] at (10.8,4.4) {$(u_6, w_3)$};
\node [scale=1.2] at (10,6.5) {$(u_6, w_4)$};

\node [scale=1.2] at (13,0) {$(u_7, w_1)$};
\node [scale=1.2] at (13,2) {$(u_7, w_2)$};
\node [scale=1.2] at (13,4) {$(u_7, w_3)$};
\node [scale=1.2] at (13,6) {$(u_7, w_4)$};

\end{tikzpicture}
\caption{Labeling of $P_7 \square P_4$.}\label{grid74}
\end{figure}
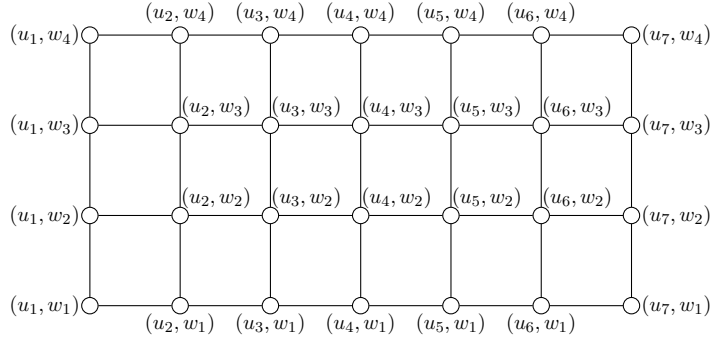

\begin{lemma}\label{grid_resolving}
For $s,t \ge 2$, let $G=P_s \square P_t$ be the grid graph with labeling as in Figure~\ref{grid74}. Let $S_1=\{(u_1,w_1), (u_1,w_t)\}$, $S_2=\{(u_1,w_1), (u_s,w_1)\}$, $S_3=\{(u_1,w_t), (u_s,w_t)\}$, and $S_4=\{(u_s,w_1), (u_s,w_t)\}$. Then $S$ is a minimum resolving set of $G$ if and only if $S=S_i$ for some $i \in \{1,2,3,4\}$.
\end{lemma}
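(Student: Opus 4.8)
The plan is to exploit the fact that in a grid the distance is the $\ell^1$ (taxicab) metric, so if I identify each vertex $(u_a,w_b)$ with the lattice point $(a,b)\in\{1,\dots,s\}\times\{1,\dots,t\}$, then $d((a,b),(c,e))=|a-c|+|b-e|$. Since $\dim(G)=2$ by Proposition~\ref{dim_grid}, a minimum resolving set has exactly two elements, so I only need to decide which $2$-subsets $\{z_1,z_2\}$, with $z_i=(p_i,q_i)$, are resolving.

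For the ($\Leftarrow$) direction I would verify one of the $S_i$ directly and invoke symmetry for the rest. For $S_1=\{(1,1),(1,t)\}$ the code of $(a,b)$ is $(a+b-2,\ a-b+t-1)$; the sum of its two entries recovers $a$ and their difference recovers $b$, so the code map is injective and $S_1$ resolves $G$. The horizontal reflection $(a,b)\mapsto(s+1-a,b)$, the vertical reflection $(a,b)\mapsto(a,t+1-b)$, and the transpose isomorphism $P_s\square P_t\cong P_t\square P_s$ are automorphisms (respectively isomorphisms) that permute $\{S_1,S_2,S_3,S_4\}$ and preserve the property of being a resolving set, so all four $S_i$ resolve $G$.

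For the harder ($\Rightarrow$) direction, let $\{z_1,z_2\}$ be resolving. I would argue in three stages, each producing an explicit unresolved pair whenever the configuration is ``wrong''. First, the two landmarks cannot differ in both coordinates: assuming they do and using the reflections to arrange $p_1<p_2$ and $q_1<q_2$, the distinct vertices $(p_1+1,q_1)$ and $(p_1,q_1+1)$ both lie at distance $1$ from $z_1$ and, since $p_1+1\le p_2$ and $q_1+1\le q_2$, both lie at distance $p_2-p_1+q_2-q_1-1$ from $z_2$; hence they share a code, a contradiction. So, up to the transpose, I may assume $p_1=p_2=p$. Second, $p$ must be an extreme column: if $2\le p\le s-1$ then $z_1,z_2$ both lie in column $p$ and fail to resolve $(p-1,b)$ from $(p+1,b)$; thus $p\in\{1,s\}$, and a reflection lets me take $p=1$. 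Third, writing $q_1<q_2$, I show $\{q_1,q_2\}=\{1,t\}$: if $q_1\ge2$ then $(1,1)$ and $(2,2)$ are each at distance $q_i-1$ from $(1,q_i)$ for $i=1,2$ and so collide, forcing $q_1=1$; symmetrically, if $q_2\le t-1$ then $(1,t)$ and $(2,t-1)$ collide, forcing $q_2=t$. Hence $\{z_1,z_2\}=\{(1,1),(1,t)\}=S_1$, and undoing the reflections and the transpose yields one of $S_1,\dots,S_4$.

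The main obstacle is the ($\Rightarrow$) direction, and specifically the discovery of the right test pairs: the content lies not in any single computation but in exhibiting, for every misplaced configuration, a concrete pair of vertices sharing a code. The adjacent-neighbor pair $\{(p_1+1,q_1),(p_1,q_1+1)\}$ used in the first stage is the crucial device, since it rules out all configurations with both coordinates distinct — in particular the deceptive pair of opposite corners, which resolves each path factor separately yet fails on the grid because both landmarks then measure only $a+b$. I would also keep an eye on the degenerate small cases $s=2$ or $t=2$ (where the grid has no interior column or row and $P_2\square P_2\cong C_4$), checking that each collision pair used is a legitimate pair of distinct grid vertices, which it is whenever $s,t\ge2$.
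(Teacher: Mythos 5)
Your proof is correct and takes essentially the same route as the paper's: both directions rest on verifying $S_1$ via injectivity of the code map $(a,b)\mapsto(a+b-2,\,a-b+t-1)$ and on disqualifying every other $2$-subset by exhibiting an explicit pair of vertices with equal codes, with the adjacent-neighbor collision pair $\{(p_1+1,q_1),(p_1,q_1+1)\}$ playing the same role as the paper's pairs of the form $\code_R((u_a,w_{b+1}))=\code_R((u_{a+1},w_b))$. The only difference is organizational: you normalize configurations up front via the reflections and the transpose isomorphism, while the paper runs directly through the cases $a=x$, $b=y$, and $a\neq x,\ b\neq y$, invoking grid symmetry ad hoc within each case.
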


\begin{proof} ($\Leftarrow$) We will show that $S_1$ forms a minimum resolving set of $G$. Let $v_1=(u_a, w_b)$ and $v_2=(u_c,w_d)$ be two distinct vertices in $G$. Note that $d((u_1, w_1), (u_a,w_b))=a-1+b-1$, $d((u_1, w_1), (u_c,w_d))=c-1+d-1$, $d((u_1, w_t), (u_a,w_b))=a-1+t-b$, and $d((u_1, w_t), (u_c,w_d))=c-1+t-d$. So, $(u_1, w_1)$ resolves $v_1$ and $v_2$ if $a+b \neq c+d$, and $(u_1, w_t)$ resolves $v_1$ and $v_2$ if $a-b \neq c-d$. If no vertex in $S_1$ resolves $v_1$ and $v_2$, then $a+b=c+d$ and $a-b=c-d$, which implies $a=c$ and $b=d$, contradicting the assumption that $v_1 \neq v_2$. Thus, $S_1$ is a minimum resolving set of $G$ with $|S_1|=2=\dim(G)$ by Proposition~\ref{dim_grid}. Similarly, one can easily show that $S_i$, $i \in \{2,3,4\}$, forms a minimum resolving set of $G$.\\

($\Rightarrow$) Let $\mathfrak{S}=\{S_1, S_2, S_3, S_4\}$. Let $R=\{(u_a,w_b), (u_x,w_y)\} \not\in \mathfrak{S}$ with $|R|=2$. We will show that $R$ is not a minimum resolving set of $G$.

\textbf{Case 1: $a=x$.} Without loss of generality, let $b<y$.

First, let $R \cap S_i \neq\emptyset$ for some $i \in \{1,2,3,4\}$, say $R \cap S_1 \neq \emptyset$, without loss of generality (other cases can be handled similarly). If $a=1$ and $b=1$, then $\code_{R}((u_1, w_{y+1}))=\code_{R}((u_2, w_y))$. If $x=1$ and $y=t$, then $\code_{R}((u_1, w_{b-1}))=\code_{R}((u_2, w_b))$. In each case, $R$ fails to be a resolving set of $G$.

Second, let $R \cap S_j=\emptyset$ for each $j \in \{1,2,3,4\}$. If $a=1$, then $\code_{R}((u_1,w_{b-1}))=\code_{R}((u_{2}, w_b))$ and $\code_{R}((u_1,w_{y+1}))=\code_{R}((u_2, w_y))$. The case $a=s$ is equivalent to the case $a=1$ by symmetry of the grid graph. If $a \not\in\{1,s\}$, then $\code_{R}((u_{a-1}, w_{b}))=\code_{R}((u_{a+1}, w_{b}))$ and $\code_{R}((u_{a-1}, w_{y}))=\code_{R}((u_{a+1}, w_{y}))$. In each case, $R$ fails to be a resolving set of $G$.

\textbf{Case 2: $b=y$.} Without loss of generality, let $a<x$.

First, let $R \cap S_i \neq\emptyset$ for some $i \in \{1,2,3,4\}$, say $R \cap S_1 \neq \emptyset$, without loss of generality (other cases are similar). If $a=1$ and $b=1$, then $\code_{R}((u_x, w_2))=\code_{R}((u_{x+1}, w_1))$. If $a=1$ and $b=t$, then $\code_{R}((u_x, w_{t-1}))=\code_{R}((u_{x+1}, w_t))$. In each case, $R$ fails to be a resolving set of $G$.

Second, let $R \cap S_j=\emptyset$ for each $j \in \{1,2,3,4\}$. If $b=1$, then $\code_{R}((u_{a-1}, w_1))=\code_{R}((u_a, w_2))$ and $\code_{R}((u_x, w_2))=\code_{R}((u_{x+1}, w_1))$. The case $b=t$ is equivalent to the case $b=1$ by symmetry of the grid graph. If $b \not\in \{1,t\}$, then $\code_{R}((u_a, w_{b-1}))=\code_{R}((u_a, w_{b+1}))$ and $\code_{R}((u_x, w_{b-1}))=\code_{R}((u_x, w_{b+1}))$. In each case, $R$ fails to be a resolving set of $G$.

\textbf{Case 3: $a \neq x$ and $b \neq y$.} Let $a<x$ and $b <y$ (other cases can be handled similarly). 

First, let $R \cap S_i \neq\emptyset$ for some $i \in \{1,2,3,4\}$, say $R \cap S_1\neq \emptyset$, without loss of generality (other cases are similar). Then $a=b=1$ and $\code_{R}((u_1, w_2))=\code_{R}((u_2, w_1))$, and thus $R$ fails to be a resolving set of $G$.

Second, let $R \cap S_j=\emptyset$ for each $j \in \{1,2,3,4\}$. If $a=1$ or $b=1$ (but not both), then $\code_{R}((u_a, w_{b+1}))=\code_{R}((u_{a+1}, w_b))$. If $x=s$ or $y=t$ (but not both), then $\code_{R}((u_{x-1}, w_y))=\code_{R}((u_x, w_{y-1}))$. If $a \neq 1$, $b \neq 1$, $x \neq s$, and $y \neq t$, then $\code_{R}((u_{a-1}, w_b))=\code_{R}((u_a, w_{b-1}))$ and $\code_{R}((u_x, w_{y+1}))=\code_{R}((u_{x+1}, w_y))$. In each case, $R$ fails to be a resolving set of $G$.~\hfill
\end{proof}

\begin{theorem}
For $s \ge t \ge 2$, let $G=P_s \square P_t$ be the grid graph with labeling as in Figure~\ref{grid74} and let $L=\cup_{i=1}^t\{(u_1,w_i),(u_s,w_i) \}$. Then
\begin{itemize}
\item[(a)] $\cdim(G)=t$;
\item[(b)] for $v \in V(G)$, $\cdim_G(v)=\left\{
\begin{array}{ll}
t & \mbox{ if } s=t \mbox{ and } \deg_G(v) \le 3, \mbox{ or } s>t \mbox{ and } v\in L,\\
t+1 & \mbox{ if } s=t \mbox{ and } \deg_G(v)=4, \mbox{ or } s>t \mbox{ and }v \not\in L.
\end{array}
\right.$
\end{itemize}
\end{theorem}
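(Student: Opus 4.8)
The plan is to work throughout with the explicit distance $d\bigl((u_a,w_b),(u_c,w_d)\bigr)=|a-c|+|b-d|$ and with the two coordinate projections $\pi_1(S)=\{a:(u_a,w_b)\in S\text{ for some }b\}$ and $\pi_2(S)=\{b:(u_a,w_b)\in S\text{ for some }a\}$. The first thing I would record is that if $G[S]$ is connected, then each of $\pi_1(S)$ and $\pi_2(S)$ is an interval of consecutive indices: moving along a grid edge changes exactly one coordinate by exactly one, so the projection of a connected subgraph onto either path factor is connected. For the upper bound in (a), the ``short side'' $C=\{(u_1,w_j):1\le j\le t\}$ is a path on $t$ vertices containing $S_1=\{(u_1,w_1),(u_1,w_t)\}$, which resolves $G$ by Lemma~\ref{grid_resolving}; hence $\cdim(G)\le t$.

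The heart of (a) is the matching lower bound. Let $S$ be any connected resolving set; I claim $\pi_1(S)=\{1,\dots,s\}$ or $\pi_2(S)=\{1,\dots,t\}$, which forces $|S|\ge\min\{s,t\}=t$ since each vertex contributes only one value to each projection. To prove the claim, suppose both projections omit an endpoint of their ranges. Using the horizontal reflection $a\mapsto s+1-a$ and the vertical reflection $b\mapsto t+1-b$ (both automorphisms of $G$), I may assume $s\notin\pi_1(S)$ and $t\notin\pi_2(S)$, so every $(u_c,w_d)\in S$ has $c\le s-1$ and $d\le t-1$. Then for the distinct vertices $X=(u_s,w_{t-1})$ and $Y=(u_{s-1},w_t)$ and any $(u_c,w_d)\in S$ one computes $d\bigl(X,(u_c,w_d)\bigr)=(s-c)+(t-1-d)=(s-1-c)+(t-d)=d\bigl(Y,(u_c,w_d)\bigr)$, so $S$ fails to resolve $X$ and $Y$, a contradiction. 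Hence $\cdim(G)=t$.

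For (b) I would first pin down all connected resolving sets of size $t$. By the dichotomy above, such an $S$ has $\pi_2(S)=\{1,\dots,t\}$, forcing exactly one vertex per row, which by connectedness must lie in a common column (so $S=\{(u_c,w_j):j\}$ with $c\in\{1,s\}$ to make $\pi_1(S)$ resolve $P_s$); or, only when $s=t$, $\pi_1(S)=\{1,\dots,s\}$, giving symmetrically a full endpoint row. Thus the minimum connected resolving sets are exactly the two short-side columns, together with the two full rows when $s=t$, and their union is $L$ when $s>t$ and the whole boundary $\{v:\deg_G(v)\le 3\}$ when $s=t$. Next I would exhibit, for every $v=(u_a,w_b)$, a connected resolving set of size $t+1$ containing $v$: take the column through $v$ together with one horizontal neighbour, $S_v=\{(u_a,w_j):1\le j\le t\}\cup\{(u_{a'},w_b)\}$ with $a'=a+1$ if $a<s$ and $a'=a-1$ otherwise. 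A direct check shows the column $\{(u_a,w_j):j\}$ resolves every pair except those of the form $(u_{a-k},w_{b'}),(u_{a+k},w_{b'})$ (same row, symmetric about column $a$), and that $(u_{a'},w_b)$ resolves each such pair, since its two distances there are $(k+1)+|b'-b|$ and $|k-1|+|b'-b|$.

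Combining these gives (b): since $\cdim_G(v)\ge\cdim(G)=t$ always, $\cdim_G(v)=t$ exactly when $v$ lies in some minimum connected resolving set, i.e. when $v\in L$ (case $s>t$) or $\deg_G(v)\le 3$ (case $s=t$); for every other $v$ the set $S_v$ yields $\cdim_G(v)=t+1$. I expect the main obstacle to be the lower bound in (a): everything rests on isolating the single unresolved pair $\{(u_s,w_{t-1}),(u_{s-1},w_t)\}$ and on the reduction, via the grid's reflection symmetries, to the case where $S$ avoids the top row and the right column simultaneously. The verification that $S_v$ resolves $G$ is routine once the column's only failure mode is identified, and the classification of the size-$t$ connected resolving sets follows cleanly from the interval observation and the dichotomy.
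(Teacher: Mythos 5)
Your proof is correct and follows essentially the same route as the paper's: the boundary columns/rows (supersets of the corner pairs from Lemma~\ref{grid_resolving}) give the upper bounds, a reflection-symmetry reduction pushing a too-small connected set off one full row and one full column isolates a single unresolved corner pair for the lower bound in (a), and your $S_v$ (the column through $v$ plus one horizontal neighbour) is exactly the paper's witness $D'=D\cup\{(u_{\alpha-1},w_\beta)\}$ for $\cdim_G(v)\le t+1$. Your explicit projection--interval observation and the resulting classification of all size-$t$ connected resolving sets simply make rigorous the steps the paper compresses into ``we may assume $S\subseteq X$'' and ``we may assume $S=D$,'' so the substance is the same.
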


\begin{proof}
Let $X=V(G)-((\cup_{i=1}^{s}\{(u_i,w_1)\})\cup (\cup_{j=1}^{t}\{(u_1,w_j)\}))$. Then $X$ (and thus, any subset of $X$) fails to be a (connected) resolving set of $G$, since $\code_X((u_1,w_2))=\code_X((u_2,w_1))$.\\

(a) Note that $\min\{s,t\}=t$. Since $\cup_{i=1}^{t}\{(u_1, w_i)\}$ forms a connected resolving set of $G$ by Lemma~\ref{grid_resolving}, $\cdim(G) \le t$. Next, we show that $\cdim(G) \ge t$. Assume, to the contrary, that $\cdim(G) \le t-1$. If $S$ is a connected resolving set of $G$ with $|S| \le t-1$, we may assume that $S \subseteq X$, contradicting the assumption that $S$ is a resolving set of $G$. So, $\cdim(G) \ge t$, and thus $\cdim(G)=t$.\\  

(b) Let $L_1=\cup_{i=1}^{t}\{(u_1,w_i)\}$, $L_2=\cup_{i=1}^{t}\{(u_s,w_i)\}$, $L_3=\cup_{j=1}^{s}\{(u_j,w_1)\}$, and $L_4=\cup_{j=1}^{s}\{(u_j, w_t)\}$. Note that $L=L_1 \cup L_2$. 

\textbf{Case 1: $s=t$ and $\deg_G(v) \le 3$, or $s>t$ and $v\in L$.} Note that, for each $v\in V(G)$, $\deg_G(v) \in \{2,3,4\}$. First, let $s=t$ and $\deg_G(v) \le 3$. Then $v\in L_i$ for some $i \in \{1,2,3,4\}$, and $L_i$ is a connected resolving set of $G$ with $|L_i|=t=\cdim(G)$ by Lemma~\ref{grid_resolving} and (a) of the current theorem. Second, let $s>t$ and $v\in L$. Then $v\in L_j$ for some $j\in\{1,2\}$, and $L_j$ is a connected resolving set of $G$ with $|L_j|=t=\cdim(G)$ by Lemma~\ref{grid_resolving} and (a) of the current theorem. So, in each case, $\cdim_G(v) \le t$. Since $\cdim_G(v) \ge t$ by Observation~\ref{obs}(b) and (a) of the current theorem, we have $\cdim_G(v)=t$.

\textbf{Case 2: $s=t$ and $\deg_G(v)=4$, or $s>t$ and $v \not\in L$.} Let $v=(u_{\alpha},w_{\beta}) \in V(G)$. Let $D=\cup_{i=1}^t\{(u_{\alpha},w_i)\}$. Note that $v \in D$. First, we show that $D'=D \cup \{(u_{\alpha-1},w_{\beta})\}$ is a connected resolving set of $G$ with $|D'|=t+1$, and thus $\cdim_G(v) \le t+1$. Let $B_1=\cup_{i=1}^{t}\{(u_1,w_i),(u_2,w_i), \ldots, (u_{\alpha-1},w_i)\}$ and $B_2=\cup_{i=1}^{t}\{(u_{\alpha+1},w_i), (u_{\alpha+2},w_i), \ldots, (u_s,w_i)\}$. Let $x=(u_a,w_b)$ and $y=(u_c,w_d)$ be two distinct vertices in $G$. If $x,y \in B_1$ ($x,y \in B_2$, respectively), then $D$ is a connected resolving set of $G[B_1]$ ($G[B_2]$, respectively) by Lemma~\ref{grid_resolving}. So, suppose that $x\in B_1$ and $y \in B_2$, or $x\in B_2$ and $y \in B_1$, say the former. Note that $\code_D(x)=\code_D(y)$ implies that $b=d$ and $|\alpha-a|=|\alpha-c|$. Since $d((u_a,w_b),(u_{\alpha-1},u_{\beta}))=d((u_a,w_b),(u_{\alpha}, u_{\beta}))-1<d((u_a,w_b),(u_{\alpha}, u_{\beta}))+1=d((u_c,w_d),(u_{\alpha}, u_{\beta}))+1=d((u_c,w_d),(u_{\alpha-1}, w_{\beta}))$, we have $\code_{D'}(x) \neq \code_{D'}(y)$. So, $D'$ is a connected resolving set of $G$. Next, we show that $\cdim_G(v) \ge t+1$. Let $S$ be any connected resolving set of $G$ containing $v$. Since $\cdim_G(v) \ge \cdim(G)=t$ by Observation~\ref{obs}(b) and (a) of the current theorem, $|S| \ge t$. If $|S|=t$, we may assume that $S=D$ since no $t$ vertices in $X$ forms a resolving set of $G$. Since $\code_D((u_{\alpha-1},w_1))=\code_D((u_{\alpha+1},w_1))$, $D$ fails to be a resolving set of $G$. So, $|S| \ge |D|+1$, and hence $\cdim_G(v) \ge t+1$. Therefore, $\cdim_G(v)=t+1$.~\hfill
\end{proof}


\section{The effect of vertex or edge deletion on the connected metric dimension of graphs}

Throughout this section, let $v$ and $e$, respectively, denote a vertex and an edge of a connected graph $G$ such that both $G-v$ and $G-e$ are connected graphs. In~\cite{vedeletion}, it was shown that $\dim(G)-\dim(G-v)$ can be arbitrarily large (c.f.~\cite{wheel1}). It was also shown in~\cite{vedeletion} that $\dim(G-v)-\dim(G)$ can be arbitrarily large. We recall the following result on the effect of edge deletion on metric dimension of a graph.

\begin{theorem}\emph{\cite{vedeletion}}
\begin{itemize}
\item[(a)] For any graph $G$ and any edge $e \in E(G)$, $\dim(G-e) \le \dim(G)+2$.
\item[(b)] The value of $\dim(G)-\dim(G-e)$ can be arbitrarily large, as $G$ varies.
\end{itemize}
\end{theorem}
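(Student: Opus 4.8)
Part (a). The plan is to start from a minimum resolving set $W$ of $G$ (so $|W|=\dim(G)$), write $e=xy$, and prove that $W\cup\{x,y\}$ is a resolving set of $G-e$; this immediately yields $\dim(G-e)\le|W|+2=\dim(G)+2$. The engine of the argument is a distance formula recovering $d_G$ from distances measured in $G-e$: since a shortest $u$–$s$ path in $G$ either avoids $e$ or traverses $e$ exactly once (a shortest path repeats no edge), one has
\[
d_G(u,s)=\min\bigl\{\, d_{G-e}(u,s),\ d_{G-e}(u,x)+1+d_{G-e}(y,s),\ d_{G-e}(u,y)+1+d_{G-e}(x,s) \,\bigr\}
\]
for all $u,s\in V(G)$. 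Now suppose, for contradiction, that distinct vertices $u,v$ are not resolved by $W\cup\{x,y\}$ in $G-e$. Then $d_{G-e}(u,s)=d_{G-e}(v,s)$ for every $s\in W$, and in addition $d_{G-e}(u,x)=d_{G-e}(v,x)$ and $d_{G-e}(u,y)=d_{G-e}(v,y)$. Feeding these equalities into the displayed formula shows that, for each $s\in W$, the three quantities over which the minimum is taken agree for $u$ and for $v$; hence $d_G(u,s)=d_G(v,s)$ for all $s\in W$, contradicting the fact that $W$ resolves $G$. The one delicate point, and the main obstacle, is justifying the distance formula, i.e. checking that a shortest $u$–$s$ path using $e$ splits as a shortest $e$-avoiding $u$–$x$ walk, the edge $e$, and a shortest $e$-avoiding $y$–$s$ walk; the rest is bookkeeping.

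Part (b). For the sharpness of the decrease I would exhibit a family $\{G_k\}$ with a distinguished edge $e_k$ for which $\dim(G_k)$ grows without bound while $\dim(G_k-e_k)=2$. The mechanism I would exploit is that deleting an edge can only lengthen distances, so the reverse operation, adding $e_k$, can collapse many distinct distances to one through a single shortcut. Concretely, let $H_k$ be the graph on $\{x,y\}\cup\{v_1,\dots,v_k\}\cup\{t_1,\dots,t_k\}$ whose edges are the path $x,v_1,\dots,v_k$ together with $v_it_i$ and $t_iy$ for each $i$. A direct computation of codes gives $\code_{\{x,y\}}(v_i)=(i,2)$, $\code_{\{x,y\}}(t_i)=(i+1,1)$, $\code_{\{x,y\}}(x)=(0,3)$, $\code_{\{x,y\}}(y)=(3,0)$, which are pairwise distinct, so $\{x,y\}$ resolves $H_k$; since $H_k$ is not a path, Theorem~\ref{dim_characterization}(a) gives $\dim(H_k)=2$. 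I then set $G_k=H_k+xy$ and $e_k=xy$, so that $G_k-e_k=H_k$.

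The purpose of the edge $e_k$ is that, via the distance formula of part (a), every $t_i$ acquires $d_{G_k}(x,t_i)=2$ and $d_{G_k}(y,t_i)=1$, so $x$ and $y$ together separate no pair among $t_1,\dots,t_k$. I would finish by forcing $\dim(G_k)$ to be large through a counting argument: computing $d_{G_k}(t_\ell,t_i)$ and $d_{G_k}(v_\ell,t_i)=\min(|\ell-i|+1,3)$ after the shortcut is inserted, one checks that a landmark $z$ separates $t_i$ from $t_{i+1}$ only when $z\in\{t_i,t_{i+1}\}$ or $z=v_\ell$ with $\ell\in\{i-1,i,i+1,i+2\}$; consequently each landmark separates at most four of the $k-1$ consecutive pairs $\{t_i,t_{i+1}\}$. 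Hence any resolving set has size at least $(k-1)/4$, so $\dim(G_k)\ge(k-1)/4$ and $\dim(G_k)-\dim(G_k-e_k)\ge(k-1)/4-2\to\infty$. The main obstacle here is the lower bound on $\dim(G_k)$: it rests on computing the post-shortcut pairwise distances correctly and on verifying the short list of landmarks capable of separating a given consecutive pair $\{t_i,t_{i+1}\}$.
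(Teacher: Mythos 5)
The paper itself states this theorem without proof, quoting it from \cite{vedeletion}; so the comparison is with the standard argument in that source. Your part (a) is correct and is essentially that argument: the formula $d_G(u,s)=\min\bigl\{d_{G-e}(u,s),\ d_{G-e}(u,x)+1+d_{G-e}(y,s),\ d_{G-e}(u,y)+1+d_{G-e}(x,s)\bigr\}$ is valid, since a shortest $u$--$s$ path in $G$ is simple and hence uses $e$ at most once, with the segments before $x$ and after $y$ avoiding $e$, while conversely each of the three quantities is the length of a walk in $G$. Because every term depends on $u$ only through $d_{G-e}(u,s)$, $d_{G-e}(u,x)$, $d_{G-e}(u,y)$, a pair unresolved by $W\cup\{x,y\}$ in $G-e$ would be unresolved by $W$ in $G$, and $\dim(G-e)\le\dim(G)+2$ follows. (You implicitly use the standing assumption of the paper's Section~6 that $G-e$ is connected, which is fine.)

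Part (b) has a genuine gap: the construction fails. In $H_k$ the hub $y$ is adjacent to \emph{every} $t_i$, so distances from $x$ saturate: $d_{H_k}(x,y)=3$ (via $x,v_1,t_1,y$), hence $d_{H_k}(x,t_i)=\min(i+1,4)$ and $d_{H_k}(x,v_i)=\min(i,5)$. Your claimed codes $\code_{\{x,y\}}(t_i)=(i+1,1)$ and $\code_{\{x,y\}}(v_i)=(i,2)$ are therefore wrong once $i$ is large; concretely $t_3$ and $t_4$ both receive code $(4,1)$, so $\{x,y\}$ does not resolve $H_k$ for $k\ge 4$, and $\dim(H_k)=2$ is not just unproven but false. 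Indeed, your own landmark-counting argument for $G_k$ applies nearly verbatim to $H_k$, because the shortcut that collapses the $t$-distances is not the new edge $e_k=xy$ but the hub $y$, which is present in \emph{both} graphs: already in $H_k$ one has $d_{H_k}(v_\ell,t_i)=\min(|\ell-i|+1,3)$ (route $v_\ell,t_\ell,y,t_i$) and $d_{H_k}(t_j,t_i)=2$ for $j\neq i$, so each landmark separates at most four of the consecutive pairs $\{t_i,t_{i+1}\}$ (and $x$ separates only those with $i\le 2$), giving $\dim(H_k)\ge (k-3)/4$. Alternatively, $\diam(H_k)\le 5$ while $|V(H_k)|=2k+2$, so $\dim(H_k)\to\infty$ by Theorem~\ref{dim_bounds}. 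Either way $\dim(G_k-e_k)$ grows linearly in $k$, so nothing in your argument shows $\dim(G_k)-\dim(G_k-e_k)$ is large, and the family cannot witness (b). When you retry, be aware of two structural obstructions your approach ran into: deleting $e=xy$ leaves $N(u)$ unchanged for every $u\notin\{x,y\}$, so twin pairs avoiding $\{x,y\}$ persist in $G-e$; and a graph built from $k$ symmetric gadgets glued along $e$ retains its gadget-permuting symmetry after $e$ is deleted, so such constructions make $\dim(G-e)$ large whenever they make $\dim(G)$ large. A correct example (see \cite{vedeletion}) must exploit a different mechanism.
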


In particular, if $G-e$ is a tree (i.e., $G$ is a unicyclic graph), we have the following

\begin{theorem}\emph{\cite{tree1, comp1}}
Let $T$ be a tree of order at least three. If $e \in E(\overline{T})$, then
$$\dim(T)-2 \le \dim(T+e) \le \dim(T)+1.$$
\end{theorem}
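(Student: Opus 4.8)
The plan is to treat the two inequalities separately: the lower bound comes essentially for free from an edge-deletion result already stated, while the upper bound is where the tree hypothesis must be used and where all the real work lies.

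For the lower bound $\dim(T)-2\le\dim(T+e)$, I would set $G=T+e$ and read the general edge-deletion inequality backwards. Since $T+e$ is a graph with $e\in E(T+e)$ and $(T+e)-e=T$, part~(a) of the preceding theorem, namely the result of~\cite{vedeletion} that $\dim(G-e)\le\dim(G)+2$, gives $\dim(T)=\dim\big((T+e)-e\big)\le\dim(T+e)+2$, which rearranges to the claim. Note that this half uses nothing about trees; it is exactly the phenomenon that deleting an edge can lower the dimension by at most $2$.

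For the upper bound $\dim(T+e)\le\dim(T)+1$ I would argue constructively, starting from a minimum resolving set $W$ of $T$ and writing $e=xy$, so that $T+e$ is unicyclic with unique cycle $C$ given by the $x$-$y$ path of $T$ together with $e$. The first ingredient is the distance lemma
$$d_{T+e}(u,v)=\min\{\,d_T(u,v),\ d_T(u,x)+1+d_T(v,y),\ d_T(u,y)+1+d_T(v,x)\,\},$$
valid because a shortest path in a unicyclic graph uses the one extra edge at most once; in particular $d_{T+e}\le d_T$, with equality unless the walk profits from the shortcut across $e$. The second ingredient is the branch decomposition: deleting the edges of $C$ leaves a forest indexed by the cycle vertices, so each vertex $v$ has a foot on $C$ and a branch-height $h(v)$, and for $u,v$ in distinct branches $d_{T+e}(u,v)=h(u)+d_C(\mathrm{foot}(u),\mathrm{foot}(v))+h(v)$, where $d_C$ is the cycle metric. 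Comparing this with the analogous tree formula (which uses the path metric along $C$ in place of $d_C$) shows first that any pair with a common foot is separated in $T+e$ by $W$ exactly when it was in $T$, and second that a pair $\{a,b\}$ which $W$ separates in $T$ but not in $T+e$ must have distinct feet $s,t$ and must satisfy, for every landmark foot $r$ off the branches of $a$ and $b$, the balance condition $d_C(r,s)-d_C(r,t)=h(b)-h(a)$. In other words, every newly confused pair is a cycle reflection that is invisible to all of $W$.

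To finish I would adjoin a single vertex $z$ on $C$ chosen so that $d_C(z,s)-d_C(z,t)\neq h(b)-h(a)$ holds simultaneously for all such confused pairs, whence $W\cup\{z\}$ resolves $T+e$ and $\dim(T+e)\le|W|+1=\dim(T)+1$. The main obstacle is precisely this symmetry-breaking step. For each confused pair the function $r\mapsto d_C(r,s)-d_C(r,t)$ is non-constant on $C$, so some cycle position separates that pair; the difficulty is to show that \emph{one} position works for all confused pairs at once rather than one per pair. The plan is to exploit the fact that $W$ already resolves $T$, which pins each confused pair down to the seam near $e$, in order to prove that the sets of ``bad'' cycle positions for the various confused pairs cannot together cover all of $C$. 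This is where the tree hypothesis is indispensable, and it is what forces the increase to be at most $+1$ rather than the $+2$ that general graphs allow.
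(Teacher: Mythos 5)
Your lower bound is correct and complete: taking $G=T+e$ in the edge-deletion theorem of~\cite{vedeletion} stated just before this result, $\dim(T)=\dim\bigl((T+e)-e\bigr)\le\dim(T+e)+2$, which is exactly $\dim(T)-2\le\dim(T+e)$, and indeed it uses nothing about trees. (For calibration: the paper itself does not prove this theorem at all; it is quoted from~\cite{tree1, comp1}, so there is no in-paper argument to compare against, and your proposal must stand on its own.)

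The upper bound, however, has a genuine gap, and you have located it yourself: the symmetry-breaking step is never carried out. You correctly set up the unicyclic distance formula, the branch decomposition with feet and heights, and the balance condition $d_C(r,s)-d_C(r,t)=h(b)-h(a)$ that a newly confused pair must satisfy against each landmark foot $r$; but the conclusion that a \emph{single} vertex $z\in V(C)$ breaks all balance conditions simultaneously is only announced as a ``plan.'' This is precisely where the content of the theorem lives, and it is not a routine verification. Two concrete obstructions: (i) the bad set $Z_{a,b}=\{z\in C: d_C(z,s)+h(a)=d_C(z,t)+h(b)\}$ can be a long arc of $C$ (the function $z\mapsto d_C(z,s)-d_C(z,t)$ changes by $0$ or $\pm 2$ per step and is constant on whole arcs), so one must genuinely prove that the union of bad sets over all confused pairs cannot cover $C$, using that $W$ resolves $T$; you assert this but give no mechanism. (ii) Your quantifier order may itself be wrong: you fix an \emph{arbitrary} minimum resolving set $W$ of $T$ and hope to extend it, whereas the known arguments for this bound choose $W$ with care, exploiting the Poisson--Zhang characterization of minimum resolving sets of trees (Theorem~\ref{zhang_tree} in this paper: one vertex on all but one leg of each exterior major vertex). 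Without that structure theory, it is not established that every minimum $W$ is extendable by one vertex, only that each individual confused pair is separable by some vertex. A secondary loose end: your balance condition only constrains landmarks whose feet lie off the branches of $a$ and $b$; landmarks inside those branches impose different failure conditions that your sketch does not analyze, so even the characterization of confused pairs as ``cycle reflections invisible to all of $W$'' is incomplete as stated.
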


Now, we examine the effect of vertex deletion on the connected metric dimension of a graph.

\begin{remark}
The value of $\cdim(G)-\cdim(G-v)$ can be arbitrarily large, as $G$ varies. Let $G=W_{n}$ for $n \ge 7$ and let $v$ be the central vertex of degree $n$ in $G$. Then $\cdim(G)=\lfloor \frac{2n+2}{5} \rfloor +1$ by Proposition~\ref{cdim_wheel}, and $\cdim(G-v)=\cdim(C_n)=2$ since any two adjacent vertices form a connected minimum resolving set of $C_n$. Thus, $\cdim(G)-\cdim(G-v)=\lfloor \frac{2n+2}{5} \rfloor -1$.
\end{remark}

\begin{remark}\label{cdim_vdel}
The value of $\cdim(G-v)-\cdim(G)$ can be arbitrarily large, as $G$ varies. For $k \ge 6$, let $G-v$ be a tree obtained from the $(k+2)$-path, given by $u_0,u_1, u_2, \ldots, u_k, u_{k+1}$, by joining a vertex $\ell_i$ to $u_i$ for each $i \in \{1,k\}$, and let $G$ be the graph obtained by joining $\ell_1$ and $\ell_k$ to a new vertex $v$ (see Figure~\ref{vdel}). Then $\cdim(G-v)=k+2$ by Theorem~\ref{cdim_tree}(a), and $\cdim(G) \le 7$ since $S=\{u_0, u_1, \ell_1, v, \ell_k, u_k, u_{k+1}\}$ forms a connected resolving set of $G$ with $|S|=7$. Thus, $\cdim(G-v)-\cdim(G) \ge k-5$.
\end{remark}

\begin{figure}[ht]
\centering
\begin{tikzpicture}[scale=.6, transform shape]

\node [draw, shape=circle] (0) at  (-2.3,1.3) {};
\node [draw, shape=circle] (1) at  (-1,1.3) {};
\node [draw, shape=circle] (2) at  (0.3,1.3) {};
\node [draw, shape=circle] (3) at  (1.6,1.3) {};
\node [draw, shape=circle] (4) at  (2.9,1.3) {};
\node [draw, shape=circle] (5) at  (5,1.3) {};
\node [draw, shape=circle] (6) at  (6.3,1.3) {};

\node [draw, shape=circle] (01) at  (-1,0) {};
\node [draw, shape=circle] (05) at  (5,0) {};

\node [draw, shape=circle] (v) at  (2,-1) {};

\draw(0)--(1)--(2)--(3)--(4)--(3.5,1.3); \draw[dotted, thick](3.5,1.3)--(4.4,1.3); \draw(4.4,1.3)--(5)--(6);
\draw(01)--(1);\draw(05)--(5); \draw(01)--(v)--(05);

\node [scale=1.2] at (-2.3,1.8) {$u_0$};\node [scale=1.2] at (-1,1.8) {$u_1$};\node [scale=1.2] at (0.3,1.8) {$u_2$};\node [scale=1.2] at (1.6,1.8) {$u_3$};\node [scale=1.2] at (2.9,1.8) {$u_4$};\node [scale=1.2] at (5.1,1.8) {$u_k$};\node [scale=1.2] at (6.3,1.8) {$u_{k+1}$};

\node [scale=1.2] at (-1,-0.5) {$\ell_1$};\node [scale=1.2] at (5,-0.5) {$\ell_k$};

\node [draw, shape=circle] (a0) at  (8.7,1.3) {};
\node [draw, shape=circle] (a1) at  (10,1.3) {};
\node [draw, shape=circle] (a2) at  (11.3,1.3) {};
\node [draw, shape=circle] (a3) at  (12.6,1.3) {};
\node [draw, shape=circle] (a4) at  (13.9,1.3) {};
\node [draw, shape=circle] (a5) at  (16,1.3) {};
\node [draw, shape=circle] (a6) at  (17.3,1.3) {};
\node [draw, shape=circle] (aL1) at  (10,0) {};
\node [draw, shape=circle] (aL5) at  (16,0) {};

\draw(a0)--(a1)--(a2)--(a3)--(a4)--(14.5,1.3); \draw[dotted, thick](14.5,1.3)--(15.4,1.3); \draw(15.4,1.3)--(a5)--(a6);\draw(a1)--(aL1);\draw(a5)--(aL5);

\node [scale=1.2] at (2,-2.5) {\textbf{(a)} $G$};
\node [scale=1.2] at (13,-2.5) {\textbf{(b)} $G-v$};\node [scale=1.2] at (2,-1.5) {$v$};

\node [scale=1.2] at (8.7,1.8) {$u_0$};\node [scale=1.2] at (10,1.8) {$u_1$};\node [scale=1.2] at (11.3,1.8) {$u_2$};\node [scale=1.2] at (12.6,1.8) {$u_3$};\node [scale=1.2] at (13.9,1.8) {$u_4$};\node [scale=1.2] at (16,1.8) {$u_k$};\node [scale=1.2] at (17.3,1.8) {$u_{k+1}$};

\node [scale=1.2] at (10,-0.5) {$\ell_1$};\node [scale=1.2] at (16,-0.5) {$\ell_k$};

\end{tikzpicture}
\caption{A graph $G$ such that $\cdim(G-v)-\cdim(G)$ can be arbitrarily large.}\label{vdel}
\end{figure}
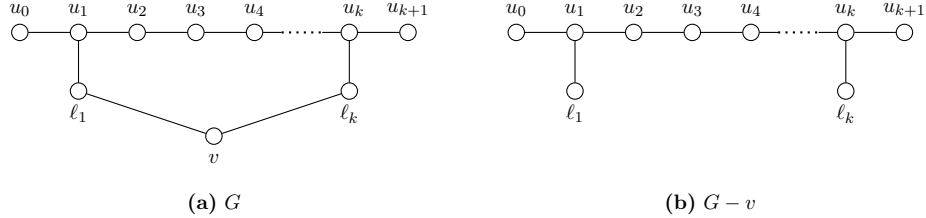

Next, we examine the effect of edge deletion on the connected metric dimension of a graph.

\begin{remark}
The value of $\cdim(G)-\cdim(G-e)$ can be arbitrarily large, as $G$ varies. Let $G-e$ be a tree with $ex(G-e)=1$. Let $w$ be the exterior major vertex of $G-e$, and let $\ell_1, \ell_2, \ldots, \ell_k$ be the terminal vertices of $w$ in $G-e$, where $k \ge 3$, such that $d_{G-e}(w, \ell_1) \ge d_{G-e}(w, \ell_2) \ge \ldots \ge d_{G-e}(w,\ell_k)=1$; further, let the $\ell_1-w$ path in $G-e$ be given by $\ell_1=s_0, s_1, s_2, \ldots, s_{a-1}, s_a=w$, with $d_{G-e}(w, \ell_1)=a \ge 4$. Let $e=\ell_1s_3$ be the new edge of $G$. See Figure~\ref{edel}. By Lemma~\ref{lemma_tree1}, $\cdim(G-e)=k$. Next, we show that $\cdim(G)=k+a-3$. Let $S$ be any minimum connected resolving set of $G$. We denote by $P^i$ the $w-\ell_i$ path, excluding $w$, where $2 \le i \le k$. Note that $\cdim(G) \ge k+a-3$: (i) $S \cap \{\ell_1, s_2\} \neq \emptyset$ since $\ell_1$ and $s_2$ are twin vertices in $G$; (ii) $S \cap V(P^i) \neq \emptyset$ for each $i \in \{2,3,\ldots, k\}$ with exactly one exception; (iii) $S$ must contain all vertices lying on the $\ell_1-w$ geodesic or $s_2-w$ geodesic in $G$ by connectedness of $G[S]$. Since $S'=(\cup_{i=2}^{a-2}\{s_i\})\cup(N[w]-\{\ell_k\})$ forms a connected resolving set of $G$ with $|S'|=a-3+k$, we have $\cdim(G)=k+a-3$. Thus, $\cdim(G)-\cdim(G-e)=k+a-3-k=a-3$.
\end{remark}

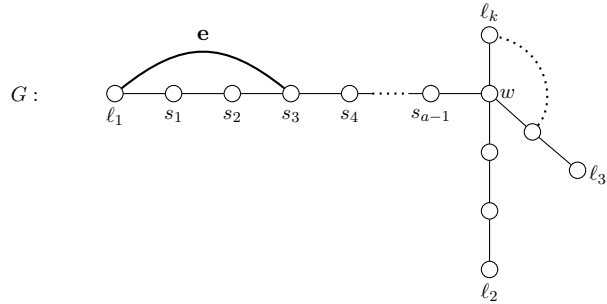
\begin{figure}[ht]
\centering
\begin{tikzpicture}[scale=.6, transform shape]

\node [draw, shape=circle] (0) at  (0,1.3) {};
\node [draw, shape=circle] (1) at  (1.3,1.3) {};
\node [draw, shape=circle] (2) at  (2.6,1.3) {};
\node [draw, shape=circle] (3) at  (3.9,1.3) {};
\node [draw, shape=circle] (4) at  (5.2,1.3) {};
\node [draw, shape=circle] (5) at  (7,1.3) {};
\node [draw, shape=circle] (w) at  (8.3,1.3) {};

\node [draw, shape=circle] (6) at  (8.3,0) {};
\node [draw, shape=circle] (7) at  (8.3,-1.3) {};
\node [draw, shape=circle] (8) at  (8.3,-2.6) {};

\node [draw, shape=circle] (9) at  (9.25,0.45) {};
\node [draw, shape=circle] (10) at  (10.25,-0.4) {};

\node [draw, shape=circle] (11) at  (8.3,2.6) {};

\draw(0)--(1)--(2)--(3)--(4)--(5.7,1.3); \draw[dotted, thick](5.7,1.3)--(6.5,1.3); \draw(6.5,1.3)--(5)--(w);
\draw(w)--(9)--(10);\draw(8)--(7)--(6)--(w)--(11);

\draw[dotted, thick](9) .. controls (9.8,1.1) and (9.6,2.3) .. (11);

\draw[thick](0) .. controls (1.5, 2.5) and (2.4,2.5) .. (3);
\node [scale=1.3] at (1.95,2.6) {${\bf{e}}$};

\node [scale=1.2] at (-2,1.3) {$\LARGE G:$};

\node [scale=1.2] at (0,0.8) {$\ell_1$};\node [scale=1.2] at (1.3,0.8) {$s_1$};\node [scale=1.2] at (2.6,0.8) {$s_2$};\node [scale=1.2] at (3.9,0.8) {$s_3$};\node [scale=1.2] at (5.2,0.8) {$s_4$};\node [scale=1.2] at (7,0.8) {$s_{a-1}$};\node [scale=1.2] at (8.7,1.3) {$w$};\node [scale=1.2] at (8.3,-3.1) {$\ell_2$};\node [scale=1.2] at (10.7,-0.5) {$\ell_3$};\node [scale=1.2] at (8.3,3.1) {$\ell_k$};

\end{tikzpicture}
\caption{A graph $G$ such that $\cdim(G)-\cdim(G-e)$ can be arbitrarily large.}\label{edel}
\end{figure}

\begin{remark}
The value of $\cdim(G-e)-\cdim(G)$ can be arbitrarily large, as $G$ varies. Let $G-e$ be a tree, as in Figure~\ref{vdel}(b), and let $e=\ell_1\ell_k$ be the new edge of $G$, where $k \ge 6$. Then $\cdim(G-e)=k+2$ by Theorem~\ref{cdim_tree}(a), and $\cdim(G) \le 6$ since $S=\{u_0, u_1, \ell_1,\ell_k, u_k, u_{k+1}\}$ forms a connected resolving set of $G$ with $|S|=6$. Thus, $\cdim(G-e)-\cdim(G) \ge k-4$.
\end{remark}


\section{Trees and unicyclic graphs $G$ satisfying $\cdim(G)=\dim(G)$}

As stated in Observation~\ref{obs}(a), $\cdim(G) \ge \dim(G)$ for any connected graph $G$. Moreover, it is shown in Remark~\ref{cdim_dim} that $\cdim(G)-\dim(G)$ can be arbitrarily large. So, it is natural to consider connected graphs $G$ for which $\cdim(G)=\dim(G)$ holds. Clearly, $\cdim(K_n)=\dim(K_n)$ and $\cdim(K_n-e)=\dim(K_n-e)$ for any $e \in E(K_n)$. However, it appears to be a challenging task to characterize all graphs $G$ satisfying $\cdim(G)=\dim(G)$. In this section, we characterize graphs $G$ satisfying $\cdim(G)=\dim(G)$ when $G$ is a tree or a unicyclic graph. First, we consider trees.

\begin{proposition}
Let $T$ be a tree of order $n \ge 2$. Then $\cdim(T)=\dim(T)$ if and only if $T=P_n$.
\end{proposition}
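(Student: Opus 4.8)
The plan is to prove both directions by leveraging the earlier characterizations, principally Theorem~\ref{cdim_tree}(a) and Theorem~\ref{dim_tree}.

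\textbf{The easy direction.} First I would dispose of the direction ($\Leftarrow$): if $T=P_n$, then $\dim(P_n)=1$ by Theorem~\ref{dim_characterization}(a), and $\cdim(P_n)=1$ by Observation~\ref{obs}(a); hence $\cdim(T)=\dim(T)$. This takes one line.

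\textbf{The forward direction.} For ($\Rightarrow$), I would argue the contrapositive: assume $T$ is \emph{not} a path, i.e. $ex(T) \ge 1$, and show $\cdim(T) > \dim(T)$. The key observation is that the two quantities are computed by formulas that are already in hand. By Theorem~\ref{dim_tree}, $\dim(T)=\sigma(T)-ex(T)$, while by Theorem~\ref{cdim_tree}(a), since $T$ is not a path, $\cdim(T)=|\mathcal{D}|+\sigma(T)$, where $\mathcal{D}$ is the set of interior degree-two vertices together with major vertices of terminal degree zero. Subtracting gives
\begin{equation*}
\cdim(T)-\dim(T)=|\mathcal{D}|+ex(T).
\end{equation*}
Since $T$ is not a path we have $ex(T)\ge 1$, so this difference is at least $1$, strictly positive; hence $\cdim(T)\neq\dim(T)$. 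This closes the contrapositive and completes the proof.

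\textbf{Where the obstacle lies.} The main subtlety is simply that the whole argument rides entirely on the correctness of the formula $\cdim(T)=|\mathcal{D}|+\sigma(T)$ from Theorem~\ref{cdim_tree}(a); once that is granted, the comparison is a one-line arithmetic fact because both $|\mathcal{D}|$ and $ex(T)$ are manifestly nonnegative, with $ex(T)\ge 1$ for a non-path tree. One small point worth stating explicitly, to avoid a gap, is that when $T$ is not a path we are genuinely in the second case of Theorem~\ref{cdim_tree}(a) and that $\mathcal M(T)\neq\emptyset$; this is exactly the condition $ex(T)\ge 1$. No delicate case analysis on the structure of $\mathcal{D}$ is needed, since I only require $|\mathcal{D}|\ge 0$; the strict inequality comes for free from the $ex(T)$ term. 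Thus the expected obstacle is conceptual bookkeeping (matching the hypothesis ``not a path'' to the right case of the cited theorem), not any hard estimate.
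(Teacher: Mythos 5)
Your proposal is correct and matches the paper's proof exactly: the paper also handles $T=P_n$ via Theorem~\ref{dim_characterization}(a) and Observation~\ref{obs}(a), and disposes of the forward direction by noting that a non-path tree satisfies $\dim(T)<\cdim(T)$ by Theorems~\ref{dim_tree} and~\ref{cdim_tree}(a). Your explicit computation $\cdim(T)-\dim(T)=|\mathcal{D}|+ex(T)\ge ex(T)\ge 1$ merely spells out the arithmetic the paper leaves implicit.
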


\begin{proof}
($\Leftarrow$) If $T=P_n$, $\cdim(P_n)=1=\dim(P_n)$ by Theorem~\ref{dim_characterization}(a) and Observation~\ref{obs}(a).

($\Rightarrow$) If $T$ is not a path, then $\dim(T)<\cdim(T)$ by Theorems~\ref{dim_tree} and~\ref{cdim_tree}(a).~\hfill
\end{proof}

Next, we consider unicyclic graphs. For $m\ge 3$, a unicyclic graph obtained from $C_m$ by adding at most one path to each vertex of $C_m$ is called a \emph{generalized $m$-sun} (see Figure~\ref{8sun} for an example of a generalized $8$-sun).

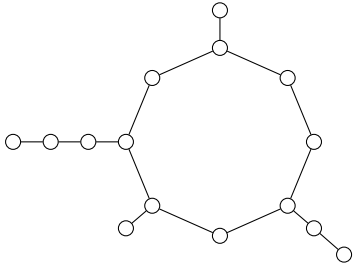
\begin{figure}[ht]
\centering
\begin{tikzpicture}[scale=.5, transform shape]

\node [draw, shape=circle, scale=1.1] (1) at  (0,2.5) {};
\node [draw, shape=circle, scale=1.1] (2) at  (-1.8,1.7) {};
\node [draw, shape=circle, scale=1.1] (3) at  (-2.5,0) {};
\node [draw, shape=circle, scale=1.1] (4) at  (-1.8,-1.7) {};
\node [draw, shape=circle, scale=1.1] (5) at  (0,-2.5) {};
\node [draw, shape=circle, scale=1.1] (6) at  (1.8,-1.7) {};
\node [draw, shape=circle, scale=1.1] (7) at  (2.5,0) {};
\node [draw, shape=circle, scale=1.1] (8) at  (1.8,1.7) {};

\node [draw, shape=circle, scale=1.1] (11) at  (0,3.5) {};
\node [draw, shape=circle, scale=1.1] (31) at  (-3.5,0) {};
\node [draw, shape=circle, scale=1.1] (32) at  (-4.5,0) {};
\node [draw, shape=circle, scale=1.1] (33) at  (-5.5,0) {};
\node [draw, shape=circle, scale=1.1] (41) at  (-2.5,-2.3) {};
\node [draw, shape=circle, scale=1.1] (61) at  (2.5,-2.3) {};
\node [draw, shape=circle, scale=1.1] (62) at  (3.3,-3) {};

\draw(1)--(2)--(3)--(4)--(5)--(6)--(7)--(8)--(1);
\draw(1)--(11); \draw(3)--(31)--(32)--(33); \draw(4)--(41);\draw(6)--(61)--(62);

\end{tikzpicture}
\caption{A generalized $8$-sun.}\label{8sun}
\end{figure}

\begin{lemma}\emph{\cite{vedeletion, comp1}}\label{lemma_joc}
Let $C$ be the unique cycle of a unicyclic graph $G$. Let $B_0=\{u,v, \theta\}$ and $B'_0=\{u_0, v_0, \theta_0\} \subseteq V(C)$, where $d(u_0,v_0)=\diam(C)$ and $u$ ($v$, $\theta$, respectively) is a vertex on the subtree rooted at $u_0$ ($v_0$, $\theta_0$, respectively). Then, we have $\code_{B_0}(x) \neq \code_{B_0}(y)$ for vertices $x$ and $y$ belonging to distinct subtrees rooted at vertices of the unique cycle $C$ of $G$. 
\end{lemma}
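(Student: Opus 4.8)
The plan is to exploit the structure of a unicyclic graph: deleting the edges of the unique cycle $C$ leaves a forest whose components are exactly the rooted subtrees $T_c$ ($c\in V(C)$), so every vertex lies in exactly one $T_c$, and for $x\in T_c$, $y\in T_{c'}$ with $c\ne c'$ every $x$--$y$ path must traverse the cycle between the two roots. I would first record the resulting distance formula: writing $d(x,c)$ for the distance of $x$ to the root of its own subtree and $d_C$ for distance measured along $C$, one has $d(x,b)=d(x,c)+d_C(c,c')+d(c',b)$ whenever $x$ and $b$ lie in distinct subtrees $T_c,T_{c'}$, while $d(x,b)\le d(x,c)+d(c,b)$ (a genuine inequality in general) when they share a subtree. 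Applying this with $b\in\{u,v,\theta\}$ and setting $\alpha=d(u_0,u)$, $\beta=d(v_0,v)$, $\gamma=d(\theta_0,\theta)$, each coordinate of $\code_{B_0}(x)$ becomes, for a subtree root $c$ distinct from the relevant reference root, the clean additive expression $d(x,c)+d_C(c,u_0)+\alpha$, and similarly for $v$ and $\theta$.

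For the main case, suppose $x\in T_c$, $y\in T_{c'}$ with $c,c'\notin\{u_0,v_0,\theta_0\}$ and $\code_{B_0}(x)=\code_{B_0}(y)$. All six coordinates are given by the clean formula, so writing $p=d(x,c)$, $q=d(y,c')$ and subtracting coordinates pairwise the depths $p,q$ cancel, leaving
\[
d_C(c,u_0)-d_C(c,v_0)=d_C(c',u_0)-d_C(c',v_0),\qquad d_C(c,v_0)-d_C(c,\theta_0)=d_C(c',v_0)-d_C(c',\theta_0).
\]
Thus the map $\Phi(w)=\big(d_C(w,u_0)-d_C(w,v_0),\,d_C(w,v_0)-d_C(w,\theta_0)\big)$ agrees at $c$ and $c'$, and the key step is to show $\Phi$ is injective on $V(C)$, which then forces $c=c'$, a contradiction. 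This is exactly where the hypothesis $d_C(u_0,v_0)=\diam(C)$ enters. When $C$ is odd, $\{u_0,v_0\}$ already resolves $C$ — a short parity computation shows the first coordinate of $\Phi$ takes distinct values on distinct vertices — so $\Phi$ is injective. When $C$ is even, $u_0$ and $v_0$ are antipodal and the first coordinate fails to separate the two vertices of each pair reflected across the $u_0$--$v_0$ axis; but such a pair is equidistant from $v_0$ while, since $\theta_0$ is a third vertex not lying on that axis, it is \emph{not} equidistant from $\theta_0$, so the second coordinate of $\Phi$ separates the pair.

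It remains to treat the configurations in which a root coincides with a reference root. If, say, $c=u_0$, then the $u$-coordinate of $x$ is only an upper bound $d(x,u)\le p+\alpha$, but the $v$- and $\theta$-coordinates of $x$, and all coordinates of $y$ (for $c'\notin\{v_0,\theta_0\}$), remain clean. Combining the clean $v$-equation $p+\diam(C)=q+d_C(c',v_0)$ with the inequality $q+d_C(c',u_0)\le p$ coming from the $u$-coordinate yields $d_C(c',v_0)\ge d_C(c',u_0)+\diam(C)>\diam(C)$, which is impossible since $\diam(C)$ is the maximum distance in $C$; the root $c=v_0$ is symmetric, and $c=\theta_0$ reduces, by the same triangle-inequality bookkeeping, to the impossibility of $\theta_0$ lying on a shortest $c'$--$u_0$ path and a shortest $c'$--$v_0$ path at once (which would force $\theta_0\in\{u_0,v_0\}$). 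When both roots are special, e.g. $\{c,c'\}=\{u_0,v_0\}$, the two tree-distance inequalities add to give $0\ge 2\,\diam(C)$, an immediate contradiction. I expect the injectivity of $\Phi$ on an even cycle — pinning down exactly why the diametral pair together with one further vertex resolves $C$ — to be the crux; the remaining case analysis is routine, but it must be organized so that in each configuration enough coordinates retain the additive form (note that at least one of $u_0,v_0,\theta_0$ always avoids both $\{c\}$ and $\{c'\}$) to be combined with the triangle inequalities and the maximality of $\diam(C)$.
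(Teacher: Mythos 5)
The paper does not actually prove this lemma---it is imported from \cite{vedeletion, comp1}---so there is no internal proof to compare against; judged on its own, your argument is correct and follows essentially the route of those sources: the additive distance decomposition across subtree roots, injectivity on $V(C)$ of the difference map $\Phi$ built from a diametral pair (a parity computation for odd cycles; for even cycles, the diametral pair resolves up to reflection in the $u_0$--$v_0$ axis, and the mirror pairs are then broken by $\theta_0$, since a pair reflected simultaneously in two distinct axes would be fixed by a nontrivial rotation), together with triangle-inequality bookkeeping in the cases where a root lies in $\{u_0,v_0,\theta_0\}$. One parenthetical is imprecise but harmless: when $c=\theta_0$, the two geodesic containments do not literally force $\theta_0\in\{u_0,v_0\}$; rather, if both cycle-geodesics leave $c'$ in the same rotational direction they trap an arc between $u_0$ and $v_0$ of length at most $d_C(c',v_0)-d_C(c',u_0)\le \diam(C)-1$, contradicting $d_C(u_0,v_0)=\diam(C)$, while if they leave in opposite directions the two arcs can overlap only at the antipode of $c'$, which does force $\theta_0\in\{u_0,v_0\}$---so the contradiction you need is available in both subcases and the fix is cosmetic, not a gap.
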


\begin{lemma}\label{lemma_unicyclic}
Let $G$ be a unicyclic graph with $\cdim(G)=\dim(G)$. Then $G$ is a generalized $m$-sun, where $m \ge 3$.
\end{lemma}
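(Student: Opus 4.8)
The plan is to prove the contrapositive: if the unicyclic graph $G$ is \emph{not} a generalized $m$-sun, then $\cdim(G) > \dim(G)$. By Observation~\ref{obs}(c) it suffices to show that \emph{no} minimum resolving set of $G$ induces a connected subgraph. Write $C$ for the unique cycle of $G$; since $G$ is simple, $|V(C)| \ge 3$ automatically, so the clause ``$m \ge 3$'' needs no separate argument. To locate the branching that witnesses the failure of the sun condition, note that $G$ is a generalized $m$-sun precisely when every off-cycle vertex has degree at most $2$ and every cycle vertex has degree at most $3$ (equivalently, each vertex of $C$ carries at most one pendant path and no attached subtree branches). Hence, if $G$ is not a generalized $m$-sun, then either some off-cycle vertex has degree $\ge 3$ or some cycle vertex has degree $\ge 4$. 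Choosing such a branch vertex $v$ that is \emph{farthest} from $C$ yields a cut-vertex $v$ of $G$ carrying at least two pendant terminal paths $Q_1,\dots,Q_\sigma$ ($\sigma \ge 2$), none of which branches further; in particular $v$ is an exterior major vertex with $ter_G(v) \ge \sigma \ge 2$. Let $c^\ast$ be the vertex of $C$ through which every $v$--$C$ path passes (so $c^\ast = v$ when $v \in V(C)$).

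Next, fix an arbitrary minimum resolving set $W$ of $G$, and establish three facts. (B) $W$ meets at least $\sigma-1 \ge 1$ of the paths $Q_1,\dots,Q_\sigma$: if two of them, say $Q_i,Q_j$, were both disjoint from $W$, then their $v$-neighbours $a_i,a_j$ would satisfy $d(z,a_i)=d(z,v)+1=d(z,a_j)$ for every landmark $z \notin V(Q_i)\cup V(Q_j)$, leaving $a_i,a_j$ unresolved. (C) $W$ is not contained in $\bigcup_t V(Q_t)$: any landmark in a pendant path of $v$ reaches $C$ only through $c^\ast$, hence is equidistant from the two cycle-neighbours of $c^\ast$, which are distinct since $m \ge 3$; thus a landmark outside $\bigcup_t V(Q_t)$ is needed to resolve that pair (cf.\ the cycle bookkeeping in Lemma~\ref{lemma_joc}). (A) $v \notin W$: this is the unicyclic analogue of Corollary~\ref{obs_tree}(a), asserting that an exterior major vertex never lies in a minimum resolving set, the landmarks forced into the terminal paths of $v$ subsuming whatever pairs $v$ alone could resolve.

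Granting (A)--(C), the conclusion is immediate. By (B) there is a landmark $p \in W$ in some pendant path $Q_i$, and by (C) there is a landmark $q \in W$ outside $\bigcup_t V(Q_t)$. Since $Q_i$ is pendant at $v$, every $p$--$q$ path in $G$ runs through $v$; but $v \notin W$ by (A), so $p$ and $q$ lie in different components of $G[W]$. Thus $G[W]$ is disconnected for \emph{every} minimum resolving set $W$, and $\cdim(G) > \dim(G)$ by Observation~\ref{obs}(c), which proves the contrapositive.

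The main obstacle is fact (A). Facts (B) and (C) are short equidistance computations, but (A) is the genuine analogue of Theorem~\ref{zhang_tree} in the presence of a cycle, and I expect it to require the delicate distance bookkeeping one already meets in the tree case: one must check that every pair separated only by $v$ --- in particular pairs with one endpoint deep in a terminal path $Q_i$ and the other on the cycle side of $v$ --- is also separated by a landmark already forced into the terminal paths (or onto the cycle), so that $W \setminus \{v\}$ would still resolve $G$, contradicting minimality. Carrying this out cleanly --- ideally by first proving a structural description of the minimum resolving sets of a unicyclic graph that extends Theorem~\ref{zhang_tree}, from which (A), (B), and (C) would all follow simultaneously --- is where the real work lies.
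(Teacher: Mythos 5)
Your reduction is correctly set up, and the easy pieces check out: the farthest-witness choice does produce an exterior major vertex $v$ carrying $\sigma\ge 2$ pendant terminal paths $Q_1,\dots,Q_\sigma$; your facts (B) and (C) are correct twin-type computations; and the final step --- (B) puts a landmark $p$ inside some $Q_i$, (C) puts a landmark $q$ outside $\bigcup_t V(Q_t)$, and (A) deletes the only vertex joining them, so $G[W]$ is disconnected and Observation~\ref{obs}(c) gives $\cdim(G)>\dim(G)$ --- is clean. But the proposal has exactly the gap you yourself flag: fact (A) is asserted, never proved, and the entire argument stands on it. Moreover you state (A) in a form stronger than you need (``an exterior major vertex never lies in a minimum resolving set'' of a unicyclic graph), and your suggested route to it --- a full structural description of minimum resolving sets of unicyclic graphs extending Theorem~\ref{zhang_tree} --- is a substantially harder project than the lemma itself; the paper proves no such description. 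As written, this is a correct plan whose hardest step is missing, not a proof.

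The missing step is, however, closable by a short computation that is essentially Case~2 of the paper's own proof, and your farthest-witness choice is precisely what makes it transfer: since no branch vertex lies beyond $v$, the pendant part $T'_v=\{v\}\cup\bigcup_t V(Q_t)$ is a spider centered at $v$, and $v$ separates $T'_v-\{v\}$ from the rest of $G$. Suppose $W$ is a resolving set with $v\in W$. By your (B), $W$ contains some $p\in V(Q_i)$; by the $c^*$-neighbour computation in your (C), $W$ contains some $w$ outside the whole subtree rooted at $c^*$, hence $w\notin V(T'_v)$ and $w\neq v$. Now verify that any pair $\{x,y\}$ unresolved by both $p$ and $w$ is unresolved by $v$: if $x,y\notin V(T'_v)$, then $d(p,z)=d(p,v)+d(v,z)$ for such $z$, so $p$ failing forces $v$ to fail; if $x,y\in V(T'_v)$, the same holds with $w$ in place of $p$; in the mixed case $x\in V(T'_v)$, $y\notin V(T'_v)$, the spider structure leaves three positions for $x$: either $x$ lies on the $p$--$v$ segment, whence $d(x,p)\le d(v,p)<d(y,v)+d(v,p)=d(y,p)$ and $p$ resolves the pair; or $v$ lies on the $x$--$p$ geodesic, whence $d(x,p)=d(x,v)+d(v,p)$ and $d(y,p)=d(y,v)+d(v,p)$, so $p$ failing forces $d(x,v)=d(y,v)$; or $p$ lies on the $x$--$v$ segment, in which case $d(x,p)=d(y,p)$ gives $d(x,v)=d(y,v)+2d(v,p)$, and then $d(x,w)=d(x,v)+d(v,w)=d(y,v)+2d(v,p)+d(v,w)=d(y,w)\le d(y,v)+d(v,w)$ forces $d(v,p)\le 0$, a contradiction, so $p$ and $w$ cannot both fail. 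Hence $W-\{v\}$ still resolves $G$, so no minimum resolving set contains $v$: this is (A), restricted to your $v$, which is all you use. With that paragraph inserted your argument is complete and becomes an attractive unified, single-case version of the paper's proof; for comparison, the paper splits into two cases and handles deep branching (a major vertex $q\neq u$ in a subtree $T_u$) by a different mechanism entirely, showing any minimum connected resolving set $S_c$ contains $\{u,q,w,\theta\}$ and then exhibiting the strictly smaller resolving set $(S_c-\{u,q\})\cup\{v'\}$, with $v'$ the cycle vertex at distance $\diam(C)$ from $u$, via Lemma~\ref{lemma_joc} --- a step your farthest-witness reduction lets you avoid, but only once (A) is actually proved.
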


\begin{proof}
Let $T_u$ be the subtree rooted at a vertex $u$ which lies on the unique cycle $C$ of the graph $G$.

\textbf{Case 1: $T_u$ has a major vertex $q$, where $q \neq u$.} Let $S_c$ be a minimum connected resolving set of $G$. Let $q'$ be a neighbor of $q$ lying on the $u-q$ path, and let $P$ be the $u-q'$ path (if $q'=u$, let $P=\{u\}$). Let $T_q$ be the subtree of $T_u - V(P)$ that contains $q$. Since no vertex in $V(T_u)$ can resolve the two vertices in $N(u) \cap V(C)$, $S_c \cap (V(G)-V(T_u)) \neq \emptyset$; let $\theta \in S_c \cap (V(G)-V(T_u))$. Since no vertex in $V(G)-(V(T_q)-\{q\})$ can resolve any distinct vertices of $N(q) \cap V(T_q)$, $S_c \cap (V(T_q)-\{q\}) \neq \emptyset$; let $w \in S_c \cap (V(T_q)-\{q\})$. By the connectedness of $S_c$, $\{u,q,w,\theta\} \subseteq S_c$. Let $x$ and $y$ be two distinct vertices in $G$. We will show that $S=(S_c-\{u,q\}) \cup \{v\}$, where $v$ lies on $C$ and $d(u,v)=\diam(C)$, is a resolving set of $G$. First, let $x,y \in V(G)-V(T_u)$. Then $d(x,q) \neq d(y,q)$ or $d(x,u) \neq d(y,u)$ imply $d(x,w) \neq d(y,w)$; thus $\code_S(x) \neq \code_S(y)$. Second, let $x \in V(T_u)$ and $y \in V(G)-V(T_u)$. Then $\code_S(x) \neq \code_S(y)$ by Lemma~\ref{lemma_joc}. Third, let $x,y \in  V(T_u)$. Then $d(x,u) \neq d(y,u)$ implies $d(x,\theta) \neq d(y, \theta)$; thus $\code_{S\cup\{q\}}(x) \neq \code_{S \cup\{q\}}(y)$. This, together with the fact that a major vertex does not belong to any minimal resolving set of a tree (by the proof for Theorem~\ref{zhang_tree}), implies that $\code_S(x) \neq \code_S(y)$. Thus, we conclude $\cdim(G)>\dim(G)$ in this case.

\textbf{Case 2: $T_u$ has no major vertex, or the only major vertex of $T_u$ is $u$.} Suppose $T_u$ is not a path with $u$ as an end vertex. In this case, $u$ must belong to any connected resolving set of $G$. We will show that $u$ does not belong to any minimum resolving set of $G$. Since no vertex in the complement of $V(T_u)-\{u\}$ can resolve the neighbors of $u$ in $T_u$, any resolving set $S$ of $G$ must contain $v\in V(T_u)-\{u\}$; $S$ must also contain $w\in V(G)-V(T_u)$ to resolve neighbors of $u$ not in $V(T_u)$. Let $x$ and $y$ be two distinct vertices in $G$. We will show that if $v$ and $w$ cannot distinguish $x$ from $y$ by distance, then neither does $u$. 

First, let $x,y\in V(G)-V(T_u)$. Then $d(v,x)=d(v,u)+d(u,x)$ and $d(v,y)=d(v,u)+d(u,y)$. So, $d(v,x)=d(v,y)$ implies $d(u,x)=d(u,y)$. Second, let $x,y\in V(T_u)$. Then $d(w,x)=d(w,u)+d(u,x)$ and $d(w,y)=d(w,u)+d(u,y)$. So, again, $d(w,x)=d(w,y)$ implies $d(u,x)=d(u,y)$. Now, let $x\in V(T_u)$ and $y\in V(G)-V(T_u)$. If $x$ lies along the $v-u$ path, then $d(x,v)\neq d(y,v)$. If $u$ lies along the $x-v$ geodesic, then $d(x,v)=d(y,v)$ implies $d(x,u)=d(y,u)$. So, suppose $v$ lies along the $x-u$ path; also assume $d(x,v)=d(y,v)$ and $d(x,w)=d(y,w)$. Since $d(y,v)=d(y,u)+d(u,v)$ and $d(x,w)=d(x,v)+d(v,u)+d(u,w)$, we have $d(y,w)=d(y,u)+2d(v,u)+d(u,w)$, which is impossible since $d(v,u)\neq 0$.~\hfill 
\end{proof}

\begin{theorem}\emph{\cite{tree3}}\label{lemma_zhang}
Let $G$ be a generalized $m$-sun, where $m \ge 3$. Then $\dim(G)=2$ if $m$ is odd and $\dim(G) \le 3$ if $m$ is even.

\end{theorem}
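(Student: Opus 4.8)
The plan is to prove the lower bound $\dim(G)\ge 2$ uniformly for both parities, and then to exhibit explicit resolving sets of the claimed sizes. Since a generalized $m$-sun contains a cycle, it is not a path, so $\dim(G)\ge 2$ by Theorem~\ref{dim_characterization}(a); the entire problem thus reduces to producing a resolving set of size $3$ (for even $m$) and of size $2$ (for odd $m$). Throughout I would write $C=c_0c_1\cdots c_{m-1}c_0$ for the unique cycle and, for each $i$, let $P^i$ be the (possibly trivial) pendant path attached at $c_i$, with $\operatorname{depth}(x)$ meaning the distance from $x\in P^i$ to its root $c_i$. The one structural fact used repeatedly is that, because $G$ is unicyclic and each pendant meets $C$ only at its root, for any cycle vertex $c_j$ and any $x\in P^i$ one has $d_G(c_j,x)=d_C(c_j,c_i)+\operatorname{depth}(x)$; in particular every cycle vertex distinguishes two vertices of a common $P^i$ by their distinct depths.

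For the bound $\dim(G)\le 3$, which already settles the even case, I would invoke Lemma~\ref{lemma_joc}. Choose three distinct cycle vertices $u_0,v_0,\theta_0$ with $d_C(u_0,v_0)=\diam(C)$ (possible since $m\ge 3$) and take $B_0=\{u_0,v_0,\theta_0\}\subseteq V(C)$. Lemma~\ref{lemma_joc} guarantees $\code_{B_0}(x)\neq\code_{B_0}(y)$ whenever $x$ and $y$ lie in distinct subtrees rooted on $C$, so the only pairs left to check are those inside a common $P^i$, and these are resolved by the depth remark above. Hence $B_0$ is a resolving set and $\dim(G)\le 3$ for every generalized $m$-sun, in particular for even $m$.

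For odd $m=2t+1$ I would sharpen this to a two-element set by placing the landmarks at $c_0$ and $c_t$, so $d_C(c_0,c_t)=t=\diam(C)$, and setting $f(k)=d_C(c_0,c_k)-d_C(c_t,c_k)$. A short computation gives $d_C(c_0,c_k)=k$ for $0\le k\le t$ and $d_C(c_0,c_k)=m-k$ for $t<k\le 2t$, while $d_C(c_t,c_k)=|k-t|$ throughout; hence $f(k)=2k-t$ on $[0,t]$ and $f(k)=3t+1-2k$ on $[t+1,2t]$. The first branch is strictly increasing and its values all have the parity of $t$, the second is strictly decreasing and its values all have the opposite parity, so $f$ is injective on $\{0,\dots,m-1\}$. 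Now if $x\in P^i$ and $y\in P^j$ satisfy $\code_{\{c_0,c_t\}}(x)=\code_{\{c_0,c_t\}}(y)$, then using $d_G(c_0,x)=d_C(c_0,c_i)+\operatorname{depth}(x)$ and the analogous identities, subtracting the two coordinates makes the depths cancel and yields $f(i)=f(j)$, whence $i=j$; equality of a single coordinate then forces equal depths, i.e. $x=y$. Treating each cycle vertex as a depth-$0$ vertex subsumes the cycle/cycle and cycle/pendant pairs, so $\{c_0,c_t\}$ resolves $G$, giving $\dim(G)\le 2$ and, with the lower bound, $\dim(G)=2$.

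The main obstacle is precisely the injectivity of $f$, i.e. the fact (special to odd cycles) that a single diametral pair of cycle vertices already resolves all of $G$; I expect the two-branch parity argument above to be the cleanest route to it. Beyond that, the only care needed is the repeated appeal to the unicyclic distance identity $d_G(c_j,x)=d_C(c_j,c_i)+\operatorname{depth}(x)$ and the separate, easy treatment of pairs lying inside a single pendant path. The even case presents no comparable difficulty once Lemma~\ref{lemma_joc} is available.
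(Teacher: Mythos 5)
Your proposal is correct, but note that the paper offers no proof of this statement to compare against: Theorem~\ref{lemma_zhang} is imported verbatim from \cite{tree3}, so your argument stands or falls on its own, and it stands. The lower bound $\dim(G)\ge 2$ via Theorem~\ref{dim_characterization}(a) is immediate since $G$ contains a cycle. Your even case is a legitimate application of Lemma~\ref{lemma_joc}: taking $u=u_0$, $v=v_0$, $\theta=\theta_0$ is allowed because each root lies in its own subtree, and the leftover within-subtree pairs are indeed killed by the identity $d_G(c_j,x)=d_C(c_j,c_i)+\operatorname{depth}(x)$, which holds because $c_i$ is a cut-vertex separating $P^i-c_i$ from the rest and distances between cycle vertices are cycle distances in a unicyclic graph (here the generalized-sun hypothesis is essential, since depth determines a vertex only when the subtree at $c_i$ is a path). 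The odd case checks out as well: with $m=2t+1$ one has $f(k)=2k-t$ on $[0,t]$ and $f(k)=3t+1-2k$ on $[t+1,2t]$, the two branches are strictly monotone with values of parities $t$ and $t+1$ respectively, so $f$ is injective, and equal codes force equal subtree index and then equal depth. What your route buys beyond the bare statement is the explicit stronger fact that for odd $m$ the diametral pair $\{c_0,c_t\}$ of cycle vertices always resolves $G$, no matter how the pendant paths are distributed; this is consistent with, and illuminates, the characterization in Section~7, where the obstruction to $\cdim(G)=\dim(G)$ for suns is not the size of a resolving set but the demand that it induce a connected subgraph (your two landmarks are at distance $t$, whereas the characterization requires an \emph{adjacent} resolving pair, forcing $2k-2$ consecutive degree-two cycle vertices). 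One cosmetic remark: the diametral property of $\{c_0,c_t\}$ plays no logical role in your odd case, since you verify injectivity of $f$ directly by computation.
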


Let $\mathcal{U}_1$ be the collection of all generalized ($2k+1$)-suns such that at least $2k-2$ consecutive vertices on the unique cycle are of degree two, where $k \ge 1$. Let $\mathcal{U}_2$ be the collection of all generalized $2k$-suns such that at least $2k-2$ consecutive vertices on the unique cycle are of degree two, where $k \ge 2$.

\begin{theorem}
Let $G$ be a unicyclic graph.
\begin{itemize}
\item[(a)] If the unique cycle of $G$ is an odd cycle, then $\cdim(G)=\dim(G)$ if and only if $G \in \mathcal{U}_1$.
\item[(b)] If the unique cycle of $G$ is an even cycle, then $\cdim(G)=\dim(G)$ if and only if $G \in \mathcal{U}_2$.
\end{itemize}
\end{theorem}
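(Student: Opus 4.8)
The plan is to reduce to generalized suns and then, in each parity, to exhibit or obstruct a connected minimum resolving set by placing landmarks \emph{antipodally} to the legs. By Lemma~\ref{lemma_unicyclic}, every unicyclic $G$ with $\cdim(G)=\dim(G)$ is a generalized $m$-sun, so in both directions I may assume $G$ is a generalized $m$-sun; the members of $\mathcal{U}_1$ and $\mathcal{U}_2$ are such suns by definition. The first step is a purely combinatorial reformulation of the defining condition: on a cycle of length $m$, having at least $2k-2$ consecutive degree-two vertices is equivalent to saying that all cycle vertices carrying a leg lie inside a single arc of at most $3$ consecutive vertices when $m=2k+1$, and at most $2$ consecutive vertices when $m=2k$. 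I work throughout with this ``window'' formulation and with Observation~\ref{obs}(c): $\cdim(G)=\dim(G)$ holds exactly when some minimum resolving set induces a connected subgraph.

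For part (a), Theorem~\ref{lemma_zhang} gives $\dim(G)=2$, so since $\cdim(G)\ge 2$ the equality $\cdim(G)=\dim(G)$ is equivalent to the existence of a resolving edge $\{x,y\}$. For sufficiency, assuming the legs lie in a window of three consecutive vertices centered at some $c$, I place the landmark edge on the two cycle vertices antipodal to $c$, which are adjacent because $m=2k+1$ is odd (the antipode of a vertex is an edge). Resolution follows from code bookkeeping: two adjacent cycle vertices already resolve the odd cycle, and a leg vertex at depth $t$ on the leg rooted at $c_j$ has code $(d(x,c_j)+t,\,d(y,c_j)+t)$; since every leg points away from the antipodal landmarks, the pair $(d(x,c_j),d(y,c_j))$ records the angular position $j$ while the common offset $t$ records the depth, and no cycle vertex lies beyond the legs, so all codes are pairwise distinct. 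For necessity, when the legs are not confined to a window of three, I show no edge resolves: for any edge $\{x,y\}$ the difference $d(x,\cdot)-d(y,\cdot)$ is a cycle invariant that is constant along each leg, and a short pigeonhole argument on this invariant produces either two leg vertices on the two spread-out positions, or a leg vertex and a cycle vertex, sharing a code, as in confusions such as $p_{1,1}=v_5$ that arise whenever a leg sits on the near side of the landmarks.

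For part (b), the same strategy applies with the window shrunk to two, but two complications make this the hard part. First, Theorem~\ref{lemma_zhang} only yields $\dim(G)\le 3$, and $\dim(G)$ may be $2$ or $3$ depending on the legs, so I must pin down $\dim(G)$ and then test connectivity of a minimum resolving set of that \emph{exact} size. Second, $C_{2k}$ has genuine antipodal pairs, so a single cycle landmark cannot separate a vertex from its antipode, and these ties must be tracked. For sufficiency, when the legs occupy a window of two consecutive vertices $\{c_i,c_{i+1}\}$, I place the landmark edge on the opposite edge $\{c_{i+k},c_{i+1+k}\}$, which is clean precisely because a $2$-window has an edge-midpoint whose antipode is again an edge, and verify by the same offset computation that it resolves and breaks every antipodal tie; if $\dim(G)=3$ I augment by one adjacent vertex of the degree-two run and check minimality against Lemma~\ref{lemma_joc}. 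For necessity, when the legs spread to three positions or to two non-consecutive positions, the antipode of the legs no longer sits symmetrically on an edge, and I argue that every minimum resolving set is forced to be disconnected, the unavoidable confusion between a middle leg and a near-antipodal cycle vertex (visible already for legs on $v_1,v_2,v_3$ in $C_6$) being unrepairable by any connected set of the minimum size.

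The main obstacle is exactly this even case: determining $\dim(G)$ precisely, so as to know the target size for the connected set, while simultaneously handling the antipodal symmetry of $C_{2k}$, and proving sharply that the threshold is a window of two rather than three. The reduction to generalized suns and the entire odd case are comparatively routine given Lemma~\ref{lemma_unicyclic}, Theorem~\ref{lemma_zhang}, and Lemma~\ref{lemma_joc}, whereas the even necessity direction, showing that even a one-step spreading of the legs destroys every connected minimum resolving set, demands the most delicate code analysis.
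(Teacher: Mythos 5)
Your skeleton matches the paper's proof closely: reduction to generalized suns via Lemma~\ref{lemma_unicyclic}, the ``window'' reformulation of $\mathcal{U}_1$ and $\mathcal{U}_2$, an antipodal landmark edge for sufficiency (the paper takes $S=\{u_1,u_{2k+1}\}$ against legs at $u_k,u_{k+1},u_{k+2}$, resp.\ $S=\{u_1,u_{2k}\}$ against legs at $u_k,u_{k+1}$), and code collisions of the form $\code_S(s_i)=\code_S(u_{i+1})$ for necessity. The odd case is fine. In the even sufficiency, however, your ``if $\dim(G)=3$ I augment'' branch is vacuous and signals a misunderstanding: once the opposite edge resolves, $\dim(G)\le\cdim(G)\le 2$, and $\dim(G)=2$ follows from Theorem~\ref{dim_characterization}(a) because a unicyclic graph is not a path. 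This is precisely how the paper pins down $\dim(G)$ for $G\in\mathcal{U}_2$ --- no member of $\mathcal{U}_2$ has metric dimension $3$, so there is nothing to augment.

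The genuine gap is in the even necessity when $\cdim(G)=\dim(G)=3$ would have to be excluded. You assert that ``every minimum resolving set is forced to be disconnected'' with an ``unrepairable confusion,'' but a direct collision argument cannot work uniformly: there exist generalized $2k$-suns outside $\mathcal{U}_2$ (e.g., legs at $u_1,u_k,u_{k+1},u_{k+2}$) for which a connected triple such as $\{u_{2k},u_1,u_2\}$ \emph{does} resolve the graph, so no confusion is available; correctness there is rescued only because such graphs satisfy $\dim=2$ via the resolving pair $\{\ell_1,\ell_k\}$ --- a fact your proposal never establishes, and your stated plan to ``pin down $\dim(G)$'' for every sun outside $\mathcal{U}_2$ is a substantial computation you do not supply. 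The paper sidesteps this entirely with an indirect contradiction: assuming a connected triple $S$ resolves and $\dim(G)=3$, it splits on $|S\cap V(\mathcal{C})|\in\{2,3\}$ (note $S$ need not lie on the cycle --- the subcase $S=\{u_1,s_1,u_{2k}\}$ must be treated, which your edge-centric sketch omits), derives the forced degree-two positions from the collisions, and then observes that the resulting graph is already resolved by $\{\ell_1,\ell_k\}$, so $\dim\le 2$, contradicting $\dim=3$; the small case $k=2$ is handled separately since $\{\ell_1,\ell_2\}$ resolves every generalized $4$-sun. Without this pivot --- contradicting the assumed value of $\dim$ rather than exhibiting a collision for the connected triple --- your even necessity does not close.
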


\begin{proof}
(a) Let $\mathcal{C}$ be the unique cycle of $G$ given by $u_1, u_2, \ldots, u_{2k+1}, u_1$, where $k \ge 1$.

($\Leftarrow$) Let $G \in \mathcal{U}_1$. Without loss of generality, let $\deg_G(u_i)=2$ for each $i \in \{1,2,\ldots, k-1\} \cup \{k+3, k+4, \ldots, 2k+1\}$. One can readily check that $S=\{u_1, u_{2k+1}\}$ forms a connected resolving set of $G$; thus $\cdim(G) \le 2$. By Observation~\ref{obs}(a) and Theorem~\ref{lemma_zhang}, $\cdim(G)=2=\dim(G)$.

($\Rightarrow$) Let $\cdim(G)=\dim(G)$. By Lemma~\ref{lemma_unicyclic}, $G$ must be a generalized ($2k+1$)-sun, where $k \ge 1$. If $\deg_G(u_i)=3$, let $\ell_i$ be the terminal vertex of $u_i$ and let $s_i$ be the vertex adjacent to $u_i$ lying on the $u_i-\ell_i$ path, where $i \in \{1,2,\ldots, 2k+1\}$. By Theorem~\ref{lemma_zhang}, $\dim(G)=2$. Note that $\{u_i, s_i\}$ does not distinguish the two neighbors of $u_i$ on $\mathcal{C}$ by distance. So, $\cdim(G)=\dim(G)$ implies that two adjacent vertices of $\mathcal{C}$ form a minimum resolving set of $G$; without loss of generality, let $S=\{u_1, u_{2k+1}\}$ be a resolving set of $G$. Then $\code_S(s_i)=\code_S(u_{i+1})$ for each $i \in \{1,2,\ldots, k-1\}$, and $\code_S(u_j)=\code_S(s_{j+1})$ for each $j \in \{k+2,k+3,\ldots, 2k\}$; thus, $\deg_G(u_i)=2$ for each $i \in \{1,2,\ldots, k-1\} \cup  \{k+3,k+4,\ldots, 2k+1\}$. Therefore, $G \in \mathcal{U}_1$.\\

(b) Let $\mathcal{C}$ be the unique cycle of $G$ given by $u_1, u_2, \ldots, u_{2k}, u_1$, where $k \ge 2$.

($\Leftarrow$) Let $G \in \mathcal{U}_2$. Without loss of generality, let $\deg_G(u_i)=2$ for each $i \in \{1,2,\ldots, k-1\} \cup \{k+2, k+3, \ldots, 2k\}$. One easily checks that $S=\{u_1, u_{2k}\}$ forms a connected resolving set of $G$; thus, $\dim(G) \le \cdim(G) \le 2$ by Observation~\ref{obs}(a). By Theorem~\ref{dim_characterization}(a), $\dim(G)=2=\cdim(G)$.

($\Rightarrow$) Let $\cdim(G)=\dim(G)$. By Lemma~\ref{lemma_unicyclic}, $G$ must be a generalized $2k$-sun, where $k \ge 2$. So, $\dim(G) \in \{2,3\}$ by Theorems~\ref{dim_characterization}(a) and~\ref{lemma_zhang}. For $i \in \{1,2,\ldots, 2k\}$, let $\ell_i$ be the terminal vertex of $u_i$ if $\deg_{G}(u_i)=3$, and let $u_i=\ell_i$ if $\deg_G(u_i)=2$; further, if $\deg_G(u_i)=3$, let $s_i$ be the vertex adjacent to $u_i$ lying on the $u_i-\ell_i$ path.

\textbf{Case 1: $\cdim(G)=\dim(G)=2$.} We will show that $G \in \mathcal{U}_2$ in this case. Without loss of generality, let $S=\{u_1, u_{2k}\}$ be a minimum (connected) resolving set of $G$. Then $\code_S(s_i)=\code_S(u_{i+1})$ for each $i \in \{1,2,\ldots, k-1\}$, and $\code_S(u_j)=\code_S(s_{j+1})$ for each $j \in \{k+1,k+2,\ldots, 2k-1\}$; thus, $\deg_G(u_i)=2$ for each $i \in \{1,2,\ldots, k-1\} \cup  \{k+2,k+3,\ldots, 2k\}$. Therefore, $G \in \mathcal{U}_2$.

\textbf{Case 2: $\cdim(G)=\dim(G)=3$.} We will show that there is no such unicyclic graph $G$. If $k=2$, then $\dim(G)=2$ for any generalized 4-sun since $\{\ell_1, \ell_2\}$ forms a minimum resolving set of $G$. Thus, assume $k \ge 3$. Let $S$ be any minimum connected resolving set of $G$ with $|S|=3$; then, clearly, $|S \cap V(\mathcal{C})| \ge 2$.

First, let $|S\cap V(\mathcal{C})|=3$. Without loss of generality, let $S=\{u_{2k},u_1,u_2\}$ be a connected resolving set of $G$. Then $\code_{S}(s_i)=\code_{S}(u_{i+1})$ for each $i \in \{2,\ldots, k-1\}$, and $\code_{S}(u_j)=\code_{S}(s_{j+1})$ for each $j \in \{k+2,\ldots, 2k-1\}$; thus, $\deg_G(u_i)=2$ for each $i \in \{2,\ldots, k-1\} \cup  \{k+3,\ldots, 2k\}$. If we let $G'$ be a generalized $2k$-sun, for $k \ge 3$, with the unique cycle $\mathcal{C}$ such that $\deg_{G'}(u_i)=2$ for each $i \in \{2,3,\ldots, k-1\} \cup  \{k+3,k+4,\ldots, 2k\}$, then $\dim(G') \le 2$ since $\{\ell_1, \ell_k\}$ forms a resolving set of $G'$. So, there is no unicyclic graph $G$ satisfying $\cdim(G)=\dim(G)=3$ with $|S\cap V(\mathcal{C})|=3$.

Second, let $|S\cap V(\mathcal{C})|=2$. By a relabeling of the vertices if necessary, let $\deg_G(u_1)=3$ and let $S=\{u_1, s_1,u_{2k}\}$ be a connected resolving set of $G$. Then $\code_{S}(s_i)=\code_{S}(u_{i+1})$ for each $i \in \{2,\ldots, k-1\}$, and $\code_{S}(u_j)=\code_{S}(s_{j+1})$ for each $j \in \{k+1,\ldots, 2k-1\}$; thus, $\deg_G(u_i)=2$ for each $i \in \{2,\ldots, k-1\} \cup  \{k+2,\ldots, 2k\}$. If we let $G^*$ be a generalized $2k$-sun, for $k \ge 3$, with the unique cycle $\mathcal{C}$ such that $\deg_{G^*}(u_i)=2$ for each $i \in \{2,3,\ldots, k-1\} \cup  \{k+2,k+3,\ldots, 2k\}$, then $\dim(G^*) \le 2$ since $\{\ell_1, \ell_k\}$ forms a resolving set of $G^*$. So, there is no unicyclic graph $G$ satisfying $\cdim(G)=\dim(G)=3$ with $|S\cap V(\mathcal{C})|=2$.~\hfill
\end{proof}


\section{Open problems}

We conclude this paper with some open problems.\\

1. As stated in Observation~\ref{obs_transitive}, if $G $ is vertex-transitive, then $\cdim_G(v)=\cdim(G)$ for any $v \in V(G)$. On the other hand, an example of a graph $H$ that is not vertex-transitive and $\cdim_H(v)=\cdim(H)$, for any $v \in V(H)$, was provided in Remark~\ref{non_transitive}. Can we characterize all connected graphs $G$ such that $\cdim_G(v)=\cdim(G)$ for any $v\in V(G)$?\\

2. In Proposition~\ref{rrad_rdiam}, it was shown that $\rrad(G) \le \rdiam(G) \le \rrad(G)+\diam(G)$. Can we characterize graphs $G$ satisfying $\rdiam(G)=\rrad(G)$ or $\rdiam(G)=\rrad(G)+\diam(G)$?\\

3. In section~7, we characterized trees and unicyclic graphs such that connected metric dimension equals metric dimension. Can we characterize other graph classes where $\cdim(G)=\dim(G)$ for a member $G$ of the graph class?\\

4. For $S \subseteq V(G)$, we define the \emph{connected metric dimension of $G$ at $S$}, denoted by $\cdim_G(S)$, to be the minimum order of a resolving set of $G$, containing $S$, that induces a connected subgraph of $G$. In this paper, we considered this problem when $S$ is a singleton. It seems interesting to study $\cdim_G(S)$ for $|S| > 1$.\\

5. Let $B_m$ be a bouquet of $m$ cycles $C^1, C^2, \ldots, C^m$ with the cut-vertex $w$, where $m \ge 2$. Let $C^1$ be an odd cycle given by $w, u_1, u_2, \ldots, u_{2k}, w$, where $k \ge 3$, and let $C^m$ be an even cycle. Let $S$ be any minimum resolving set of $B_m$, and let $S_c$ be any minimum connected resolving set of $B_m$. We note that $|S \cap \{u_k,u_{k+1}\}|=1$ (see~\cite{bouquet}), whereas $S_c \cap \{u_k, u_{k+1}\}=\emptyset$. Likewise, any minimum connected resolving set of $B_m$ at $u_{i} \in V(C^1)-(N(u_k) \cup N(u_{k+1}))$ contains neither $u_k$ nor $u_{k+1}$. This shows that, for a general graph $G$, the problem of determining $\dim(G)$ is essentially different from that of $\cdim(G)$. So, it is a natural problem to consider the computational complexity of determining the connected metric dimension (at a vertex) of $G$.\\\\

\textbf{Acknowledgments.} The authors are grateful to the anonymous referees for the corrections, as well as the valuable comments, which substantially improved this paper.


\end{document}